

\documentclass[journal]{IEEEtran}  %

\usepackage{ifthen}
\newboolean{longver}
\setboolean{longver}{true}


\usepackage{amsmath}
\usepackage{amssymb}
\usepackage{amsthm}
\usepackage{enumitem}
\usepackage{xfrac}

\usepackage{tikz}
\usepackage{pgfplots}
\pgfplotsset{compat=1.17}
\usetikzlibrary{fit}
\usepackage{dsfont}
\usetikzlibrary{shapes,arrows}
\usetikzlibrary{patterns}


\usepackage{graphicx}
\usepackage{epsfig} 
\usepackage{epstopdf}
\usepackage{mathptmx} 
\usepackage{times} 
\usepackage{amsmath} 
\usepackage{amssymb}  
\usepackage[utf8]{inputenc}
\usepackage[english]{babel}
\usepackage{fancyhdr}
\usepackage{dsfont}

%

\usepackage{lipsum}
\ifCLASSOPTIONcompsoc
\usepackage[caption=false, font=normalsize, labelfont=sf, textfont=sf]{subfig}
\else
\usepackage[caption=false, font=footnotesize]{subfig}
\fi

\usepackage{color}

\newtheorem{thm}{Theorem}
\newtheorem{defn}{Definition}
\newtheorem{prop}{Proposition}
\newtheorem{lem}{Lemma}
\newtheorem{rem}{Remark}
\newtheorem{cor}{Corollary}

\newtheorem{cex}{Counterexample}

\newtheorem{ex}{Example}

\newcommand{\mcl}[1]{\mathcal{#1}}

%
%
%
%
%
%

\newcommand{\R}{\mathbb{R}}
\newcommand{\x}{\mathbf{x}}

\newcommand{\N}{\mathbb{N}}

\newcommand{\norm}[1]{\lVert{#1}\rVert}

\newcommand{\eps}{\varepsilon}

\title{\LARGE \bf
Sublevel Set Approximation in The Hausdorff and Volume Metric with Application to Path Planning and Obstacle Avoidance
}

\author{Morgan Jones%
	\thanks{M. Jones is with the Department of Automatic Control and Systems Engineering,
	The University of Sheffield, Amy Johnson Building, Mappin Street, Sheffield, S1 3JD. e-mail: {\tt \small morgan.jones@sheffield.ac.uk} }
}


\begin{document}

\maketitle
\thispagestyle{plain}
\pagestyle{plain}


\begin{abstract}
Under what circumstances does the ``closeness" of two functions  imply the ``closeness" of their respective sublevel sets? In this paper, we answer this question by showing that if a sequence of functions converges strictly from above/below to a function, $V$, in the $L^\infty$ (or $L^1$) norm then these functions yield a sequence sublevel sets that converge to the sublevel set of $V$ with respect to the Hausdorff metric (or volume metric). Based on these theoretical results we propose Sum-of-Squares (SOS) numerical schemes for the  optimal outer/inner polynomial sublevel set approximation of various sets, including intersections and unions of semialgebraic sets, Minkowski sums, Pontryagin differences and discrete points. We present several numerical examples demonstrating the usefulness of our proposed algorithm 
including approximating sets of discrete points to solve machine learning one-class classification problems and  approximating Minkowski sums to construct C-spaces for computing optimal collision-free paths for Dubin's car.
\end{abstract}

%
\section{Introduction} \label{sec: intro}

The notion of a \textit{set} is a fundamental object of the modern mathematical toolkit, simply defined as a collection of elements. Sets are ubiquitous throughout control theory and are the language we use to represent frequently encountered concepts such as uncertainty~\cite{guan2013uncertainty,zhang2020partition}, regions of attraction~\cite{jones2021converse,henrion2013convex}, reachable states~\cite{trodden2016one,jones2020polynomial}, invariant sets~\cite{korda2014convex}, attractors~\cite{schlosser2022converging,jones2023converse}, admissible inputs~\cite{trodden2020actuation}, feasible states~\cite{scibilia2011feasible}, etc. Even for simple, low-dimensional problems, such sets can contain vast complexities and be numerically challenging to manipulate and analyze. As modern engineering systems become increasingly complex, we can expect that this problem will be exacerbated as the sets we encounter become ever more unwieldy. In this context, we present several fundamental results for the approximation of sets, along with associated implementable numerical schemes based on Sum-of-Squares (SOS) programming.


For some given set $X \subset \R^n$, the goal of this paper is to find an outer or inner set approximation of $X$. Specifically, we would like to compute another set $Y \subset \R^n$ such that $X \subseteq Y$ (outer approximation) or $Y \subseteq X$ (inner approximation) and $Y \approx X$, where the approximation is defined with respect to some set metric. 

Previous attempts at solving this set approximation problem were based on the fact that the volume of an ellipsoid, $\{x \in \R^n: x^\top A x <0 \}$, is proportional to $\det(A^{-1})$.  Then an equivalent optimization problem can be constructed for computing outer ellipsoid approximations by minimizing the convex objective function $-\log \det(A)$~\cite{magnani2005tractable}. This approach of determinant maximization has been heuristically generalized to the problem of computing outer SOS polynomial sublevel approximations~\cite{ahmadi2016geometry,jones2019using}. Alternative heuristic matrix trace maximization schemes have also been proposed~\cite{dabbene2013set}. The seminal work of~\cite{henrion2009approximate} solved the related problem of approximating an integral over a semialgebraic set using the moment approach, showing that the dual to this problem can give outer semialgebraic set approximations. 

 More recently, in the special case of star convex semialgebraic sets, a heuristic approach based on sublevel set scaling has been proposed in~\cite{guthrie2022inner}, demonstrating impressive numerical set approximation results based on SOS programming. An approach based on $L^1$ maximization, similar to some of the schemes proposed in this paper, has been proposed in~\cite{cotorruelo2022sum,dabbene2017simple}. However, the work of ~\cite{cotorruelo2022sum} only considers the specific problem of approximating Pontryagin differences and lacks convergence guarantees and~\cite{dabbene2017simple} only considers the specific problem of approximating semialgebraic in the volume metric. In~\cite{lasserre2015tractable} sets defined by quantifiers are approximated in the volume metric to arbitrary accuracy.  In this paper we extend~\cite{lasserre2015tractable} to the volume metric approximation of more general sets, represented as sublevel sets of integrable functions. Moreover, we propose a new fundamental result for the approximation sets in the Hausdorff metric. Other works that have dealt with the problem of set approximation in the Hausdorff metric include the excellent work of~\cite{jongen2008smoothing} that considered the problem of mollifying feasible constraints and helped to inspire the proof of the Hausdorff set approximations in this paper.


The main contributions of this paper is to show that when $X \subset \R^n$ can be written as a sublevel set of some function $V$ over a compact set (or only finite Lebesgue measurable in the case of volume approximation) $\Lambda \subset \R^n$. the following holds:
\begin{itemize}
	\item Approximating $V$ in the \textbf{$L^\infty$ norm} from above by some function $J$ yields a sublevel set $Y=\{x \in \Lambda : J(x)<\gamma \}$ that provides an arbitrarily accurate inner approximation of $X$ in the \textbf{Hausdorff metric}.
		\item Approximating $V$ in the \textbf{$L^1$ norm} from above by some function $J$ yields a sublevel set $Y=\{x \in \Lambda : J(x)<\gamma \}$ that provides an arbitrarily accurate inner approximation of $X$ in the \textbf{volume metric}.
\end{itemize}
The above sublevel set approximation results provide us with guiding principles for the design of numerical procedures for set approximation. Firstly, given a set, $X$, we must write $X$ as a sublevel set of some function $V$. Fortunately, this requirement is not difficult to satisfy for many of the sets encountered in control theory and in this work we demonstrate this by presenting $V$ explicitly for intersections and unions of semialgebraic sets, Minkowski sums, Pontryagin differences and discrete points. Although not considered in this work, our framework can still be applied when $V$ is not known explicitly, rather it is implicitly defined such as when $V$ is a Lyapunov function~\cite{jones2021converse} or value function~\cite{jones2020polynomial}. After finding $V$, we must find a function $J$ that minimizes $||V-J||$ such that $V(x) \le J(x)$, where $||\cdot||$ is the $L^1$ or $L^\infty$ norm. By introducing auxiliary variables we show that this optimization problem can be lifted to a convex optimization problem and solved by tightening the inequality constraints to SOS constraints for many set approximation problems with Volume/Hausdorff metric convergence guarantees.

To summarize our contribution, to the best of the authors knowledge, only the works of~\cite{dabbene2017simple,lasserre2015tractable} provide numerical schemes with volume metric convergence guarantees for the approximation semialgebraic sets and sets defined by quantifiers respectively. In contrast our work expands on these ideas and provides numerical schemes with convergence guarantees in \textbf{both} the Hausdorff and volume metric for a \textbf{larger class of sets} such as unions of semialgebraic sets, Minkowski sums, Pontryagin differences and discrete points. Moreover, the main results of the paper, stated in Theorems~\ref{thm: uniform convegence mplies H convergence} and~\ref{thm: close in L1 implies close in V norm strict sublevel set}, are independent of SOS programming, allowing exploration of alternative numerical schemes such as those found in~\cite{kamyar2015polynomial}.

\ifthenelse{\boolean{longver}}{}{\vspace{-0.25cm}}
\subsection{Application to Path Planning and Obstacle Avoidance}
\ifthenelse{\boolean{longver}}{}{\vspace{-0.15cm}}
    \ifthenelse{\boolean{longver}}{%
	 The path planning of autonomous systems is the computational problem of finding the sequence of inputs that moves a given object from an initial condition to a target set while avoiding obstacles. Computing optimal paths is a well researched subject, with a broad range of practical uses ranging from the navigation of UAVs~\cite{pham2020complete} to the precise movements of robotic manipulators~\cite{michaux2023can}.
	    }{%
	    
	    }

 Many popular algorithms for solving the path planning problem, such as $A^*$ or Dijkstra, do not inherently account for the size and shape of the object that travels along the path. A common method for overcoming this problem is to transform the workspace of the problem to a Configuration-space (C-space), where in C-space the object is represented by a single point and obstacles are enlarged to account for the loss of size and shape of the object. The enlarged obstacles in C-space are given by the Minkowski sum of the sets representing the object and obstacles. In special convex cases the Minkowski sum can be found as a closed form solution~\cite{ruan2022closed}, however,  unfortunately there is no analytical expression for the Minkowski sum of two general sets. In the absence of an analytical expression for Minkowski sums we rely on numerical approximations. Numerical schemes have previously been proposed for the case of ellipsoid objects and convex objects~\cite{ruan2022efficient} and also for more general sets using SOS and $\log \det$ heuristics~\cite{guthrie2022closed}. In this paper we apply our proposed SOS based numerical scheme  for set approximation, with convergence guarantees, to the problem of approximating Minkowski sums in order to construct C-spaces. 

\ifthenelse{\boolean{longver}}{}{\vspace{-0.15cm}}
\section{Notation} \label{sec: notation}

For $A \subset \R^n$ we denote the indicator function by $\mathds{1}_A : \R^n \to \R$ that is defined as $\mathds{1}_A(x) = \begin{cases}
& 1 \text{ if } x \in A\\
& 0 \text{ otherwise.}
\end{cases}$ For $B \subseteq \R^n$,  $\mu(B):=\int_{\R^n} \mathds{1}_B (x) dx$ is the Lebesgue measure of $B$. We denote the Hausdorff metric (given in Eq.~\eqref{eqn: H metric}) by $D_H$ and the volume metric (given in Eq.~\eqref{eqn: volume metric}) by $D_V$. For two sets $A,B \in \R^n$ we denote $A/B= \{x \in A: x \notin B\}$. For $x \in \R^n$ we denote $||x||_p= \left( \sum_{i =1}^n x_i^p \right)^{\frac{1}{p}}$. For $\eta>0$ and $y \in \R^n$ we denote the set $B_\eta(y)= \{x\in \R^n : ||x-y||_2< \eta\}$. We say $f: \Omega \to \R$ is such that $f \in L^1(\Omega,\R)$ if $||f||_{L^1(\Omega,\R)}:=\int_\Omega |f(x)| dx < \infty$. We define the $L^\infty$ norm as $||f||_{L^\infty(\Omega,\R)}:=\sup_{x \in \Omega} |f(x)|$. We denote the space of polynomials $p: \R^n \to \R$ by $\R[x]$ and polynomials with degree at most $d \in \N$ by $\R_d[x]$. We say $p \in \R_{2d}[x]$ is Sum-of-Squares (SOS) if there exists $p_i \in \R_{d}[x]$ such that $p(x) = \sum_{i=1}^{k} (p_i(x))^2$. We denote $\sum_{SOS}^d$ to be the set of SOS polynomials of at most degree $d \in \N$ and the set of all SOS polynomials as $\sum_{SOS}$. For two sets $A,B \subset \R^n$ we denote the Minkowski sum by $A \oplus B$ (defined in Eq.~\eqref{eq:mink sum}) and Pontryagin difference by $A \ominus B$ (defined in Eq.~\eqref{eq: Pontry diff}).\ifthenelse{\boolean{longver}}{ If $M$ is a subspace of a vector space $X$ we denote equivalence relation $\sim_M$ for $x,y \in X$ by $x \sim_M y$ if $x-y \in M$. We denote quotient space by $X \pmod M:=\{ \{y \in X: y \sim_M x \}: x \in X\}$.    }{
    }

\ifthenelse{\boolean{longver}}{}{\vspace{-0.2cm}}
\section{Sublevel Set Approximation} \label{sec:sublevel set approx}
\ifthenelse{\boolean{longver}}{}{\vspace{-0.1cm}}
Given a set $X \subset \R^n$, the goal of this paper is to solve \ifthenelse{\boolean{longver}}{he following problem,}{}
\ifthenelse{\boolean{longver}}{}{\vspace{-0.1cm}}
\begin{align} \label{opt: set approx}
Y^* \in \arg \inf_{Y \in \mcl C} D_S(X,Y),
\end{align}
where $D_S$ is some set metric providing a notion of how close set $Y^*$ is to $X$ and $\mcl C$ is some set of feasible sets.

The decision variables in Opt.~\eqref{opt: set approx} are sets, which are uncountable objects. This poses a challenge as it is difficult to search and hence optimize over sets effectively. To make such set optimization problems tractable, we need to find ways to parameterize our decision variables. The approach we take to overcome this problem is to consider problems where both our target set, $X$, and our decision variable, $Y$, are sublevel sets of functions, taking the following forms $X=\{x \in \Lambda: V(x)<\gamma\}$ and $Y=\{x \in \Lambda: J(x)< \gamma\}$, where $\Lambda \subset \R^n$ and $\gamma \in \R$. Then, rather than attempting to directly solve the ``geometric" set optimiziation problem in Eq~\eqref{opt: set approx}, we consider the following associated ``algebriac" optimization problem, 
\ifthenelse{\boolean{longver}}{}{\vspace{-0.1cm}}
\begin{align} \label{opt:  function approx}
J^* \in \arg \inf_{J \in \mcl F} D_F(V,J),
\end{align}
 where $D_F$ is some function metric providing a notion of the distance between the functions $V$ and $J$ and $\mcl F$ is some finite dimensional function space.

The decision variables of Opt.~\eqref{opt:  function approx} are functions and hence can be easily optimized over by parametrizing the decision variables using basis functions. We next turn our attention to the question of when does solving Opt.~\eqref{opt:  function approx} yield a close solution to Opt.~\eqref{opt: set approx}? More specifically, if $X=\{x \in \Lambda: V(x)<  \gamma \}$, $Y=\{x \in \Lambda: J(x)<  \gamma \}$ and $J \approx V$ when is it true that $X \approx Y$? In the following subsections we answer this question by showing that, for a given function, $V:\R^n \to \R$, a compact set, $\Lambda \subset \R^n$, and a sequence of functions, $\{J_d\}_{d \in \N}$, the following hold:
\begin{enumerate}[label=(\Alph*)]
	\item  If $J_d(x) \ge V(x)$ for all $x \in \Lambda$ and $J_d \to V$ as $d \to \infty$ in the $L^\infty(\Lambda,\R^n)$ norm then for any $\gamma>0$ we have that $\{x \in \Lambda: J_d(x)< \gamma\} \to \{x \in \Lambda: V(x)< \gamma\}$ with respect to the Hausdorff metric (defined in Eq.~\eqref{eqn: H metric}).
	\item If $V(x) \le J_d(x)$ for all  $x \in \Lambda$  and $J_d \to V$ as $d \to \infty$ in the $L^1(\Lambda,\R^n)$ norm then for any $\gamma>0$ we have that $\{x \in \Lambda: J_d(x) < \gamma\} \to \{x \in \Lambda: V(x) < \gamma\}$ in the volume metric (defined in Eq.~\eqref{eqn: volume metric}). 
\end{enumerate}
\ifthenelse{\boolean{longver}}{}{\vspace{-0.2cm}}
\subsection{Sublevel Set Approximation In The Hausdorff Metric}
\ifthenelse{\boolean{longver}}{}{\vspace{-0.1cm}}
For sets $A,B \subset \R^n$, we denote the Hausdorff metric as $D_H(A,B)$, defining
\begin{equation} \label{eqn: H metric}
\qquad \qquad  D_H(A,B):=\max\{ H(A,B), H(B,A)  \} ,
\end{equation}
where $H(A,B):=\sup_{x \in A} D(x,B)$ and $D(x,B):=\inf_{y \in B}\{\norm{x-y}_2\}$.

\ifthenelse{\boolean{longver}}{
	We now show that uniform convergence of functions yields sublevel approximation in the Hausdorff metric.
	    }{
	    }
\begin{thm} \label{thm: uniform convegence mplies H convergence}
	Consider a compact set $\Lambda \subset \R^n$, a function $V : \Lambda \to \R$, and a family of functions $\{J_d \}_{d \in \N}$ that satisfies the following properties:
\begin{enumerate}
	\item For any $ d \in \N$ we have $V(x) \le J_d(x)$ for all $x \in \Lambda$.
	\item $\lim_{d \to \infty} \sup_{x \in \Lambda} |V(x) -J_d(x)| =0$.
\end{enumerate}
Then for all $\gamma \in \R$ we have that,
\begin{align} \label{sublevel sets close in H}
\lim_{d \to \infty}	D_H \bigg(\{x \in \Lambda: J_d(x)< \gamma\},\{x \in \Lambda: V(x)< \gamma\} \bigg)=0.
\end{align}
\end{thm}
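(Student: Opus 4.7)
The plan is to write $S := \{x \in \Lambda : V(x) < \gamma\}$ and $S_d := \{x \in \Lambda : J_d(x) < \gamma\}$ and exploit the one-sided simplicity first. From hypothesis (1), if $J_d(x) < \gamma$ then $V(x) \leq J_d(x) < \gamma$, so $S_d \subseteq S$ and therefore $H(S_d, S) = 0$. Consequently $D_H(S, S_d) = H(S, S_d) = \sup_{x \in S} D(x, S_d)$, so the entire task reduces to controlling the one-sided distance from $S$ to $S_d$.

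To bridge $S$ and $S_d$, I would introduce the auxiliary inner sublevel sets $A_\beta := \{x \in \Lambda : V(x) \leq \gamma - \beta\}$ for $\beta > 0$. The point is that once $d$ is large enough that $\|J_d - V\|_{L^\infty(\Lambda)} < \beta$, every $x \in A_\beta$ satisfies $J_d(x) \leq V(x) + \|J_d - V\|_{L^\infty(\Lambda)} < (\gamma - \beta) + \beta = \gamma$, giving $A_\beta \subseteq S_d$ and hence $D(x, S_d) \leq D(x, A_\beta)$ for every $x \in S$. Thus the whole argument will close once I can show the following key lemma: for every $\eta > 0$ there is some $\beta > 0$ such that $\sup_{x \in S} D(x, A_\beta) \leq \eta$.

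I would prove the lemma by contradiction, which is where the compactness of $\Lambda$ does the real work. Suppose instead there exist $\eta_0 > 0$, a sequence $\beta_k \downarrow 0$, and points $x_k \in S$ with $D(x_k, A_{\beta_k}) > \eta_0$. By compactness, pass to a subsequence $x_k \to x^* \in \Lambda$. For any fixed $\beta > 0$, eventually $\beta_k < \beta$, so $A_\beta \subseteq A_{\beta_k}$ and hence $D(x_k, A_\beta) \geq D(x_k, A_{\beta_k}) > \eta_0$; by the 1-Lipschitz continuity of $y \mapsto D(y, A_\beta)$, passing to the limit gives $D(x^*, A_\beta) \geq \eta_0$. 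Since this holds for every $\beta > 0$ and $\bigcup_{\beta > 0} A_\beta = S$ (every $y \in S$ has $V(y) < \gamma$, so $y \in A_\beta$ for $\beta = (\gamma - V(y))/2$), we obtain $D(x^*, S) \geq \eta_0$, contradicting $x_k \in S$ with $x_k \to x^*$.

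With the lemma at hand, the conclusion is routine: given $\eta > 0$, select $\beta$ from the lemma, then pick $d$ large enough that $\|J_d - V\|_{L^\infty(\Lambda)} < \beta$; this yields $\sup_{x \in S} D(x, S_d) \leq \sup_{x \in S} D(x, A_\beta) \leq \eta$, which combined with $H(S_d, S) = 0$ gives $D_H(S, S_d) \leq \eta$, proving \eqref{sublevel sets close in H}. I expect the main obstacle to be precisely the key lemma: it is a genuinely non-trivial uniformity statement asserting that the strict inner sublevel sets $A_\beta$ ``fill'' the open sublevel set $S$ uniformly as $\beta \downarrow 0$, and it is the only place where compactness of $\Lambda$ is essential. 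Notably, the argument requires no continuity assumption on $V$, since the strict inequality defining $S$ together with $\bigcup_\beta A_\beta = S$ is enough to route the contradiction.
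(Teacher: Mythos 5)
Your proof is correct, and while it rests on the same two pillars as the paper's argument (the trivial inclusion $S_d\subseteq S$ killing one side of the Hausdorff distance, and a sequential-compactness contradiction for the other side), the decomposition is genuinely different. The paper attacks $H(S,S_d)$ directly: it negates convergence, extracts points $x_d\in S$ with $D(x_d,S_d)\ge\delta$, passes to a convergent subsequence, and shows each subsequence element eventually lands in $S_d$, contradicting the distance bound via the triangle inequality near the limit point. You instead quarantine all of the work into a lemma that concerns $V$ alone --- that the closed inner sublevel sets $A_\beta=\{x\in\Lambda:V(x)\le\gamma-\beta\}$ fill the open sublevel set $S$ uniformly in the one-sided Hausdorff sense as $\beta\downarrow 0$ --- and the hypotheses on $J_d$ then enter only through the two-line inclusion $A_\beta\subseteq S_d$ once $\norm{J_d-V}_{L^\infty(\Lambda,\R)}<\beta$. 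Your lemma's proof is sound: the monotonicity $A_\beta\subseteq A_{\beta_k}$ for $\beta_k<\beta$, the $1$-Lipschitz continuity of $y\mapsto D(y,A_\beta)$, and the identity $S=\bigcup_{\beta>0}A_\beta$ (which needs only the strictness of the inequality defining $S$, not continuity of $V$) combine exactly as you say, and the degenerate case $A_\beta=\emptyset$ causes no trouble. What your route buys is modularity and diagnostic clarity: it isolates precisely where compactness and where uniform convergence are each used, which makes it immediately visible why the $L^1$ analogue of this result cannot give Hausdorff convergence (the inclusion $A_\beta\subseteq S_d$ is exactly what breaks), consistent with the paper's counterexamples. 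The paper's proof is shorter on the page but interleaves the two mechanisms; yours is slightly longer but each piece is independently checkable and the lemma is reusable.
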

\begin{proof} Throughout this proof we use the following notation $X_d:=\{x \in \Lambda: J_d(x)< \gamma\}$ and $X^*:=\{x \in \Lambda: V(x)< \gamma\}$. 
	
Now, recall from Eq.~\eqref{eqn: H metric} that the Hausdorff metric for two sets  $X_d, X^* \subset \R^n$ is defined as the maximum of two terms, $D_H(X_d,X^*):=\max\{ H(X_d,X^*), H(X^*,X_d)  \} $, where $H(X_d,X^*):=\sup_{x \in X_d} D(x,X^*)$ and $D(x,X^*):=\inf_{y \in X^*}\{\norm{x-y}_2\}$. This naturally leads us to split the remainder of the proof into two parts. In Part~1 of the proof we show that the first term is such that $H(X_d,X^*)=0$ for all $d \in \N$. In Part~2 of the proof we show that for all $\eps>0$ there exists $N \in \N$ such that for all $d>N$ the second term is such that $H(X^*,X_d)< \eps$. Then Parts~1 and~2 of the proof can be used together to show Eq.~\eqref{sublevel sets close in H}, completing the proof.
	
	\underline{\textbf{Part 1 of proof:}} In this part of the proof we show $H( X_d,X^*)=0$ for all $d \in \N$. Since $V(x) \le J_d(x) $ for all $x \in \Lambda$ it follows that $X_d \subseteq X^* $ for any $d \in \N$. Thus for all $d \in \N$ and $x \in  X_d$ we have that $D(x,X^*)=0$. Therefore it clearly follows $H(X_d,X^*)=\sup_{x \in X_d} D(x,X^*)=0$ for all $d \in \N$.
	
	\underline{\textbf{Part 2 of proof:}} In this part of the proof we show that for all $\eps>0$ there exists $N \in \N$ such that for all $d>N$ we have $H(X^*, X_d)< \eps$. For contradiction suppose the negation, that there exists $\delta>0$ such that
	\begin{align*}
	H(X^*, X_d) > \delta \text{ for all } d \in \N.
	\end{align*}
	Then for each $d \in \N$ there exists $x_d \in X^*$ such that 
	\begin{align} \label{pfeq:x_d delta}
	D(x_d,X_d) \ge \delta \text{ for all } d \in \N.
	\end{align}
	
	 Now, $X_d \subseteq \Lambda$ and $\Lambda$ is compact. Therefore the sequence $\{x_d\}_{d \in \N} \subseteq \Lambda$ is bounded. Thus, by the Bolzano Weierstrass Theorem\ifthenelse{\boolean{longver}}{ (Thm.~\ref{thm: Bolzano} found in Appendix~\ref{sec: appendix miscaleneous})}{}, there exists a convergent subsequence $\{y_n\}_{n \in \N} \subseteq \{x_d\}_{d \in \N}$. Let us denote the limit point of $\{y_n\}_{n \in \N}$ by $y^* \in \Lambda$. 
	 
	 Since $\{x_d\}_{d \in \N}$ satisfies Eq.~\eqref{pfeq:x_d delta} and  $\{y_n\}_{n \in \N} \subseteq \{x_d\}_{d \in \N}$ it follows
	 \ifthenelse{\boolean{longver}}{}{\vspace{-0.35cm}}
	 \begin{align} \label{pfeq:y_n delta}
	 D(y_n,X_n) \ge \delta \text{ for all } n \in \N.
	 \end{align}

	 Moreover, because $x_d \in X^*$ for all $d \in \N$ and $\{y_n\}_{n \in \N} \subseteq \{x_d\}_{d \in \N}$ it follows $y_n \in X^*$ for all $n \in \N$. Hence, $\eps_n:=\gamma - V(y_n)  >0$ for all $n \in \N$. 
	 
	 On the other hand, since $\lim_{d \to \infty} \sup_{x \in \Lambda} |V(x) -J_d(x)| =0$ it follows for each $n \in \N$ there exists $N_n \in \N$ such that
	 \begin{align} \label{pfeqn: J_d(x_n) - V(x_n)}
	 J_d(y_n) - V(y_n)< \eps_n \text{ for all } d> N_n.
	 \end{align}
	 Since $\eps_n := \gamma - V(y_n)  >0$, it follows by Eq.~\eqref{pfeqn: J_d(x_n) - V(x_n)} that
	 	 \begin{align*} 
	 J_d(y_n) < \gamma \text{ for all } d> N_n,
	 \end{align*}
	 which implies $y_n \in X_d$ for all $d> N_n$.

	 Now, since $y_n \to y^*$ there exists $N \in \N$ such that $||y_n - y^*||_2< \delta/4$ for all $n >N$. Fixing $n >N$ and selecting $ d>\max\{N,N_n\}$ (as in Eq.~\eqref{pfeqn: J_d(x_n) - V(x_n)} so $y_n \in X_d$ for $ d>N_n$) we have that
	 \ifthenelse{\boolean{longver}}{}{\vspace{-0.4cm}   }
	 \begin{align*}
	 \qquad & D(y_d, X_d)= \inf_{x \in X_d} ||y_d - x ||_2 \\
	 & \qquad \le ||y_d - y_n||_2 \le ||y_d - y^*||_2 + ||y_n - y^*||_2 \le \delta/2,
	 \end{align*}
	 contradicting Eq.~\eqref{pfeq:y_n delta}. Thus it follows that for all $\eps>0$ there exists $N \hspace{-0.05cm}  \in \hspace{-0.05cm} \N$ such that for all $d>N$ we have $H(X^*, X_d)< \eps$. \end{proof}
 \ifthenelse{\boolean{longver}}{
 	\begin{rem} \label{rem: counterexamples}
 		The conditions that $\Lambda$ is compact, sublevel sets are strictly defined, $V(x) \le J_d(x)$ and $\lim_{d \to \infty} \sup_{x \in \Lambda} |V(x) -J_d(x)| =0$ in Thm.~\ref{thm: uniform convegence mplies H convergence} cannot be relaxed. In Section~\ref{subsec: appendix counter examples} we have presented several counterexamples showing that if any of these conditions are relaxed then it is not necessarily true that the sublevel sets of $J_d$ converge to the sublevel set of $V$ in the Hausdorff metric. 
 	\end{rem}
 	    }{
 	    }

\subsection{Sublevel Set Approximation in The Volume Metric} \label{subsec: vol approx}

For Lebesgue measurable sets $A,B \subset \R^n$, we denote the volume metric as $D_V(A,B)$, where
\begin{equation} \label{eqn: volume metric}
\qquad \qquad  D_V(A,B):=\mu\bigg( (A/B) \cup (B/A) \bigg),
\end{equation}
recalling from Sec.~\ref{sec: notation} that we denote $\mu(A)$ as the Lebesgue measure of the set $A \subset \R^n$. 

\begin{thm} \label{thm: close in L1 implies close in V norm strict sublevel set}
	Consider a Lebesgue measurable set $\Lambda \subset \R^n$ with finite Lebesgue measure, a function $V \in L^1(\Lambda, \R)$, and a family of functions $\{J_d \in L^1(\Lambda, \R): d \in \N\}$ that satisfies the following properties:
	\begin{enumerate}
		\item For any $ d \in \N$ we have $V(x) \le J_d(x) $ for all $x \in \Lambda$.
		\item $\lim_{d \to \infty} ||V -J_d||_{L^1(\Lambda, \R)} =0$.
	\end{enumerate}
	Then for all $\gamma \in \R$ we have that
	\begin{align} \label{strict sublevel sets close}
		\lim_{d \to \infty}	D_V \bigg(\{x \in \Lambda: J_d(x) < \gamma \} , \{x \in \Lambda: V(x) < \gamma \} \bigg) =0.
	\end{align} 
\end{thm}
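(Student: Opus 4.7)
The plan is to exploit the pointwise inequality to reduce the volume metric to a one-sided measure, and then split this set according to how close $V$ is to the threshold $\gamma$, treating the ``far from $\gamma$'' part by a Markov-type argument using the $L^1$ hypothesis, and the ``near $\gamma$'' part by continuity of measure from below.

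Concretely, write $X_d := \{x \in \Lambda : J_d(x) < \gamma\}$ and $X^* := \{x \in \Lambda : V(x) < \gamma\}$. First I would observe that the hypothesis $V \le J_d$ forces $X_d \subseteq X^*$, so $X_d \setminus X^* = \emptyset$ and therefore $D_V(X_d, X^*) = \mu(X^* \setminus X_d)$. The remaining task is to show $\mu(X^* \setminus X_d) \to 0$. Next, for any $\eta > 0$, I would decompose
\begin{align*}
X^* \setminus X_d &= \{x \in \Lambda : V(x) < \gamma,\; J_d(x) \ge \gamma\} \\
&\subseteq \{x \in \Lambda : V(x) < \gamma - \eta,\; J_d(x) \ge \gamma\} \cup \{x \in \Lambda : \gamma - \eta \le V(x) < \gamma\}.
\end{align*}
On the first set, $J_d(x) - V(x) > \eta$, so Markov's inequality (applied to the nonnegative function $J_d - V$) bounds its measure by $\|J_d - V\|_{L^1(\Lambda,\R)}/\eta$. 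The second set is independent of $d$.

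For the second set, I would invoke continuity of measure from below: since $\{V < \gamma - \eta\} \uparrow \{V < \gamma\}$ as $\eta \downarrow 0$ and $\mu(\{V < \gamma\}) \le \mu(\Lambda) < \infty$, it follows that $\mu(\{\gamma - \eta \le V < \gamma\}) \to 0$ as $\eta \downarrow 0$. The $\eps$-argument then runs in the standard order: given $\eps > 0$, first fix $\eta > 0$ small enough that $\mu(\{\gamma - \eta \le V < \gamma\}) < \eps/2$; then choose $N \in \N$ large enough that $\|J_d - V\|_{L^1(\Lambda,\R)} < \eta\eps/2$ for all $d > N$, which is possible by hypothesis~2. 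Combining the two bounds yields $\mu(X^* \setminus X_d) < \eps$ for all $d > N$, proving Eq.~\eqref{strict sublevel sets close}.

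I do not anticipate a serious obstacle. The only mildly subtle point is that we do not need (and should not assume) $\mu(\{V = \gamma\}) = 0$: the ``near-$\gamma$'' set $\{\gamma - \eta \le V < \gamma\}$ always shrinks in measure to zero because of the strict inequality defining $X^*$, which is exactly why the theorem requires \emph{strict} sublevel sets. The finiteness of $\mu(\Lambda)$ is what makes the continuity-from-below step valid, mirroring its role in the hypothesis.
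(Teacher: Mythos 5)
Your proof is correct, but it follows a genuinely different route from the paper. The paper argues by reduction: it sets $\tilde V=-V$, $\tilde J_d=-J_d$, invokes an external proposition (Prop.~7 of~\cite{jones2020polynomial}) asserting volume-metric convergence of \emph{non-strict} sublevel sets under one-sided $L^1$ approximation from below, and then transfers that statement to the strict sublevel sets of $V$ and $J_d$ via the complementation identities $\{x\in\Lambda:\tilde V(x)\le-\gamma\}=\Lambda/\{x\in\Lambda:V(x)<\gamma\}$ together with the fact that $D_V(A,B)=\mu(A)-\mu(B)$ for nested finite-measure sets. You instead give a direct, self-contained argument: nestedness $X_d\subseteq X^*$ reduces $D_V$ to $\mu(X^*\setminus X_d)$, the band decomposition at level $\gamma-\eta$ plus Markov's inequality applied to the nonnegative function $J_d-V$ controls the part where $V$ is bounded away from $\gamma$, and continuity of measure from below kills the band $\{\gamma-\eta\le V<\gamma\}$ as $\eta\downarrow0$. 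Each approach has its merits: the paper's is shorter given the quoted proposition and exhibits the strict/non-strict duality explicitly, while yours avoids the external dependency and the complementation bookkeeping, and makes visible exactly where each hypothesis enters (one-sidedness for nestedness, $L^1$ convergence for the Markov bound, strictness for the vanishing of the band without any assumption on $\mu(\{V=\gamma\})$). One small attribution to tidy: continuity from below holds for any measure; what the finiteness of $\mu(\Lambda)$ actually buys you is the right to write $\mu(\{\gamma-\eta\le V<\gamma\})=\mu(\{V<\gamma\})-\mu(\{V<\gamma-\eta\})$ and conclude that this difference tends to zero.
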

\ifthenelse{\boolean{longver}}{
	\begin{proof}
		Let us denote $\tilde{V}(x)=-V(x)$ and $\tilde{J}_d(x)= - J_d(x)$. It follows that $\tilde{J}_d(x) \le \tilde{V}(x)$ for all $x \in \Lambda$ and $\lim_{d \to \infty} ||\tilde{V} -\tilde{J}_d||_{L^1(\Lambda, \R)} =0$. Therefore, by Prop.~\ref{prop: close in L1 implies close in V norm} {(found in the appendix)} it follows that for any $\gamma \in \R$ we have that,
		\begin{align} \label{pp1}
			\lim_{d \to \infty}	D_V \bigg(\{x \in \Lambda : \tilde{V}(x) \le \gamma\}, \{x \in \Lambda : \tilde{J}_d(x) \le \gamma\} \bigg) =0.
		\end{align}
		
		Now, $\Lambda=\{x \in \Lambda : V(x) < \gamma\} \cup \{x \in \Lambda : V(x) \ge \gamma\} = \{x \in \Lambda : V(x) < \gamma\} \cup \{x \in \Lambda : \tilde{V}(x) \le -\gamma\}$. Therefore
		\begin{align} \label{1}
			\{x \in \Lambda : \tilde{V}(x) \le -\gamma \} = \Lambda / \{x \in \Lambda : V(x) < \gamma \},
		\end{align}
		and by a similar argument
		\begin{align} \label{2}
			\{x \in \Lambda : \tilde{J}_d(x) \le -\gamma \} = \Lambda / \{x \in \Lambda : J_d(x) < \gamma \}.
		\end{align}
		
		Moreover, $V(x) \le J_d(x)$ it follows 
		\begin{align} \label{set con 1}
			\{x \in \Lambda : V(x) < \gamma\} \subseteq \{x \in \Lambda : J_d(x) < \gamma\},
		\end{align}
		and hence 
		\begin{align} \label{set con 2}
			\Lambda /\{x \in \Lambda : J_d(x) < \gamma\} \subseteq \Lambda/\{x \in \Lambda : V(x) < \gamma\},
		\end{align}
		
		Thus, by Lem.~\ref{lem: D_V is related to vol} {(found in the appendix)} and Eqs~\eqref{1} \eqref{2} \eqref{set con 1} \eqref{set con 2} , we have that
		\begin{align} \label{pp2}
			& D_V \bigg(\{x \in \Lambda : V(x) < \gamma\}, \{x \in \Lambda : J_d(x) < \gamma\} \bigg)\\ \nonumber
			& =\mu(\{x \in \Lambda : V(x) < \gamma\})-\mu(\{x \in \Lambda : J_d(x) < \gamma\}) \\ \nonumber
			& =\bigg(\mu(\Lambda)-\mu(\{x \in \Lambda : J_d(x) < \gamma\}) \bigg) \\ \nonumber 
			& \qquad -\bigg( \mu(\Lambda)  - \mu(\{x \in \Lambda : V(x) < \gamma\})\bigg) \\ \nonumber
			&=\mu(\Lambda/ \{x \in \Lambda : {J}_d(x) < \gamma\})-\mu(\Lambda/ \{x \in \Lambda : {V}(x) < \gamma\}) \\ \nonumber 
			& = D_V \bigg( \Lambda/ \{x \in \Lambda : {V}(x) < \gamma\}, \Lambda/ \{x \in \Lambda : {J}_d(x) < \gamma\} \bigg) \\ \nonumber
			& = D_V \bigg(\{x \in \Lambda : \tilde{V}(x) \le -\gamma\}, \{x \in \Lambda : \tilde{J}_d(x) \le -\gamma\} \bigg).
		\end{align}
		Now by Eqs~\eqref{pp1} and~\eqref{pp2} it follows that Eq.~\eqref{strict sublevel sets close} holds.
	\end{proof}
	    }{
	    \begin{proof}
	    	Let us denote $\tilde{V}(x)=-V(x)$ and $\tilde{J}_d(x)= - J_d(x)$. It follows that $\tilde{J}_d(x) \le \tilde{V}(x)$ for all $x \in \Lambda$ and $\lim_{d \to \infty} ||\tilde{V} -\tilde{J}_d||_{L^1(\Lambda, \R)} =0$. Therefore, by Prop.~7 from~\cite{jones2020polynomial} it follows that for any $\gamma \in \R$ we have that,
	    	\begin{align} \label{pp1}
	    		\lim_{d \to \infty}	D_V(\{x \in \Lambda : \tilde{V}(x) \le \gamma\}, \{x \in \Lambda : \tilde{J}_d(x) \le \gamma\} ) =0.
	    	\end{align}
	    	
	Now, $\Lambda=\{x \in \Lambda : V(x) < \gamma\} \cup \{x \in \Lambda : V(x) \ge \gamma\} = \{x \in \Lambda : V(x) < \gamma\} \cup \{x \in \Lambda : \tilde{V}(x) \le -\gamma\}$. Therefore
\begin{align} \label{1}
	\{x \in \Lambda : \tilde{V}(x) \le -\gamma \} = \Lambda / \{x \in \Lambda : V(x) < \gamma \},
\end{align}
and similarly $\{x \in \Lambda : \tilde{J}_d(x) \le -\gamma \} = \Lambda / \{x \in \Lambda : J_d(x) < \gamma \}$. 

Moreover, since $V(x) \le J_d(x)$ it follows 
\begin{align} \label{set con 1}
&	\{x \in \Lambda : V(x) < \gamma\} \subseteq \{x \in \Lambda : J_d(x) < \gamma\},\\ \label{set con 2}
&	\Lambda /\{x \in \Lambda : J_d(x) < \gamma\} \subseteq \Lambda/\{x \in \Lambda : V(x) < \gamma\},
\end{align}


		Thus, by  Lem.~1 from~\cite{jones2019using}, if $B \subseteq A$ are finite Lebesgue measurable then
		$D_V(A,B) =\mu(A/B)= \mu(A)- \mu (B)$, and Eqs~\eqref{1}  \eqref{set con 1} \eqref{set con 2} , we have
\begin{align} \label{pp2}
	& D_V (\{x \in \Lambda : V(x) < \gamma\}, \{x \in \Lambda : J_d(x) < \gamma\} )\\ \nonumber
	& =\mu(\{x \in \Lambda : V(x) < \gamma\})-\mu(\{x \in \Lambda : J_d(x) < \gamma\}) \\ \nonumber
	& =\bigg(\mu(\Lambda)-\mu(\{x \in \Lambda : J_d(x) < \gamma\}) \bigg) \\ \nonumber 
	& \qquad -\bigg( \mu(\Lambda)  - \mu(\{x \in \Lambda : V(x) < \gamma\})\bigg) \\ \nonumber
	&=\mu(\Lambda/ \{x \in \Lambda : {J}_d(x) < \gamma\})-\mu(\Lambda/ \{x \in \Lambda : {V}(x) < \gamma\}) \\ \nonumber 
	& = D_V ( \Lambda/ \{x \in \Lambda : {V}(x) < \gamma\}, \Lambda/ \{x \in \Lambda : {J}_d(x) < \gamma\} ) \\ \nonumber
	& = D_V (\{x \in \Lambda : \tilde{V}(x) \le -\gamma\}, \{x \in \Lambda : \tilde{J}_d(x) \le -\gamma\} ).
\end{align}
	    	Now by Eqs~\eqref{pp1} and~\eqref{pp2} it follows that Eq.~\eqref{strict sublevel sets close} holds.
	    \end{proof}
	    
	    }

\section{Numerical Sublevel Set Approximation} \label{sec: SOS programs for set approx}
 Given  some set $X \subset \R^n$ we would like to approximate, Theorems~\ref{thm: uniform convegence mplies H convergence} and~\ref{thm: close in L1 implies close in V norm strict sublevel set} illuminate the following steps we must take:
\begin{enumerate}
	\item Write the set $X$ as a sublevel set of some function $V$.
	\item Approximate the function $V$ by a uniformly bounding function in the $L^\infty$ norm or $L^1$ norm (depending on whether or not the set approximation is required to be with respect to the Hausdorff or volume metric).
\end{enumerate}
Step 1 is always viable since by \ifthenelse{\boolean{longver}}{Prop.~\ref{prop: exietence of smooth sublevel set} (found in the {appendix})}{Prop.~1 from~\cite{schlosser2021converging}}, it follows that for any compact set $X \subset \R^n$ there always exists a smooth function $V$ such that $X=\{x \in \R^n: V(x) \le 0\}$. Sometimes it is possible to complete Step~2 when $V$ is not known analytically, such as for value functions~\cite{jones2020polynomial} or Lyapunov functions~\cite{jones2021converse}. However, for numerical implementation in this work we will only consider cases where $V$ is known explicitly. Later, in the following subsections, we will present several analytical expressions of $V$ for various classes of sets including intersections and unions of semialgebraic sets, Minkowski sums, Pontryagin differences and discrete points. Before proceeding to these subsections we next briefly discuss the general approach we take to solving Step 2, approximating $V$ by a uniformly bounding function in either the $L^\infty$ or $L^1$ norm.
\paragraph{An optimization problem for $L^\infty$ approximation}
For a given function $V$ let us consider the problem of approximating $V$  uniformly from above in the $L^\infty$ norm, \ifthenelse{\boolean{longver}}{}{\vspace{-0.05cm}}
\begin{align} \label{opt: Linf 1}
	J_d^* \in &\arg \inf_{J_d \in \R_d[x]} \sup_{x \in \Lambda} |V(x)-J_d(x)|\\ \nonumber
&	\text{such that } V(x) \le J_d(x) \text{ for all } x \in \Lambda.
\end{align}

Unfortunately, Opt.~\eqref{opt: Linf 1} is a bi-level optimization problem, having a nested supremum inside the objective function. This makes the problem challenging to solve as our numerical implementation is based on SOS programming (although alternatives to SOS exist~\cite{kamyar2015polynomial}) that requires the coefficients of the polynomial decision variable, $J_d \in \R_d[x]$, to appear linearly in the objective function. Fortunately, it is possible to lift the problem to a convex problem, where all unknown coefficients appear linearly in the constraints and objective function, by introducing extra decision variables in the following way,
\ifthenelse{\boolean{longver}}{}{\vspace{-0.05cm}}
\begin{align} \label{opt: Linf 2}
	J_d^* \in &\arg \inf_{J_d \in \R_d[x],
	P_d \in \R_d[x], \gamma \in \R} \gamma \\ \nonumber
	&	\text{such that } P_d(x) \le V(x) \le J_d(x) \text{ for all } x \in \Lambda,  \\ \nonumber
	& \hspace{1.5cm} J_d(x)-P_d(x)< \gamma \text{ for all } x \in \Lambda.
\end{align}
\paragraph{An optimization problem for $L^1$ approximation} For a given function $V $ let us consider the problem of approximating $V$  uniformly from above in the $L^1$ norm, 
\ifthenelse{\boolean{longver}}{}{\vspace{-0.05cm}}
\begin{align} \label{opt: L1}
	J_d^* \in &\arg \sup_{J_d \in \R_d[x]} \int_\Lambda |J_d(x)-V(x)| dx \\ \nonumber
	&	\text{such that } V(x)  \le J_d(x)  \text{ for all } x \in \Lambda.
\end{align}
Since the constraints of Opt.~\eqref{opt: L1} enforce $V(x) \le J_d(x)  $ it follows that $\int_\Lambda |J_d(x)-V(x)| dx= \int_\Lambda J_d(x) dx- \int_\Lambda V(x) dx$. Since $\int_\Lambda V(x) dx$ is a constant it is equivalent to minimize $ \int_\Lambda J_d(x) dx$ as it is to minimize $\int_\Lambda |J_d(x)-V(x)| dx$. Hence, rather than solving Opt.~\eqref{opt: L1} we solve, 
\ifthenelse{\boolean{longver}}{}{\vspace{-0.05cm}}
\begin{align} \label{opt: L2}
	J_d^* \in &\arg \inf_{J_d \in \R_d[x]} \int_\Lambda J_d(x) dx \\ \nonumber
	&	\text{such that } V(x) \le J_d(x)   \text{ for all } x \in \Lambda.
\end{align}
Interestingly, Opt.~\eqref{opt: L2} shares some similarities to the dual problem in~\cite{henrion2009approximate} for the approximation of integrals over sets. 


It is also useful to consider a similar optimization problem to Opt.~\eqref{opt: L2} that yields outer sublevel set approximations
\ifthenelse{\boolean{longver}}{}{\vspace{-0.05cm}}
\begin{align} \label{opt: L2 2}
	J_d^* \in &\arg \sup_{J_d \in \R_d[x]} \int_\Lambda J(x) dx \\ \nonumber
	&	\text{such that } J_d(x) \le V(x)   \text{ for all } x \in \Lambda.
\end{align}
\paragraph{SOS implementation of $L^\infty$ and $L^1$ approximation}
To solve Opts~\eqref{opt: Linf 2}~\eqref{opt: L2} and~\eqref{opt: L2 2} we tighten the problem by replacing all inequality constraints by SOS constraints.\ifthenelse{\boolean{longver}}{
	This tightening results in a SOS optimization problem that yields a single polynomial sublevel set approximation of several types of sets. Unfortunately, it is non-trivial to tighten Opts~\eqref{opt: Linf 2}~\eqref{opt: L2} and~\eqref{opt: L2 2} because, as we will see, for many set approximation problems the associated $V$ is non-polynomial and hence we cannot directly constrain $ V(x) \le J_d(x)$ or $ P_d(x) \le V(x)$ using SOS.     }{} In the subsequent subsections we consider the following cases,
\begin{itemize}
	\item $V(x)=\max_{1 \le i \le m} g_i(x)$ associated with semialgebraic sets (see Lemma~\ref{lem: writing semialg as a single sublevel}).
	\item $V(x):=\min_{1 \le i \le m} g_i(x)$ associated with unions of semialgebraic sets (see Lemma~\ref{lem: writing union semialg as a single sublevel}).
	\item $V(x):=\inf_{w  \in  \{z \in \Lambda: g_2(z) \le 0 \} } g_1(x-w)$ associated with Minkowski sums (see Lemma~\ref{lem: Mink sum of sublevel sets}).
	\item $V(x):=\sup_{w  \in  \{z \in \Lambda: g_2(z) \le 0 \} } g_1(x+w)$ associated with Pontryagin differences (see Lemma~\ref{lem: Pontryagin difference of sublevel sets}).
	\item $V(x):=1-\mathds{1}_{\{x_i\}_{i=1}^N}(x)$ associated with discrete points (see Lemma~\ref{lem: sublevel set pf discrete points}).
\end{itemize}
For brevity we next only discuss the case of using SOS to constrain $ V(x) \le J_d(x)$ or $ P_d(x) \le V(x)$ when $V(x)=\max_{1 \le i \le m} g_i(x)$ and $g_i \in \R[x]$ for each $1 \le i \le m$. Other forms of $V$ can also be constrained using a similar approach since, with the exception of discrete points, each $V$ involves max/min operators. 

For $V(x)=\max_{1 \le i \le m} g_i(x)$ it is straightforward to constrain $ V(x) \le J_d(x)$ by enforcing $g_i(x) \le J_d(x)$ for $1 \le i \le m$ which tightens to the SOS constraint $J_d-g_i \in \sum_{SOS}$ for $1 \le i \le m$. On the other hand it is slightly more difficult to constrain $P_d(x) \le V(x)$. This is because if we set $P_d(x) \le g_i(x)$ for all $1 \le i \le m$ then $P_d(x) \le \min_{1 \le i \le m} g_i(x) <\max_{1 \le i \le m} g_i(x)=V(x)$ implying that $P_d$ cannot be made arbitrarily close to $V$ since $|V(x)-P_d(x)| \ge \max_{1 \le i \le m} g_i(x)-\min_{1 \le i \le m} g_i(x)>0$.  To enforce $P_d(x) \le V(x)$ in a non-conservative manner we enforce
\begin{align} \label{non conservative constraint}
P_d(x) \le g_i(x) \text{ for } x \in \{y\in \Lambda: g_i(y)\ge g_j(y) \text{ for all } i \ne j  \}.
\end{align}
 \ifthenelse{\boolean{longver}}{
 	 This constraint is now a polynomial inequality over a semialgebraic set and therefore can be readily tightened to an SOS constraint.
 	    }{
 	    \vspace{-1cm}
 	    }
\subsection{Approximation of Semialgebraic Sets} \label{subsec: approx semialg set}
In this subsection we solve the problem of approximating semialgebraic sets, sets of the form $ X=\{x \in \R^n: g_i(x) < 0 \text{ for all } 1 \le i \le m \}$, where $g_i \in \R[x]$ for $1 \le i \le m$. Throughout this section we will assume $X \subset \R^n$ is a compact set and hence WLOG we assume 
 \begin{align}  \label{eq: semialg sets}
X=\{x \in \Lambda: g_i(x) < 0 \text{ for all } 1 \le i \le m \},
 \end{align}
 where $ \Lambda \subset \R^n$ is some sufficiently large compact set and $g_i \in \R[x]$ for $1 \le i \le m$. 
 
 In the following Lemma we show that semialgebraic sets can be written as a sublevel set of a single function.
\begin{lem}[Semialgebraic sets can be written as a single sublevel set] \label{lem: writing semialg as a single sublevel}
	Consider $X=\{x \in \Lambda: g_i(x) < 0 \text{ for all } 1 \le i \le m \}$ then 
\vspace{-0.2cm}	\begin{align*} 
		X=\{x \in \Lambda: V(x)<0 \},
	\end{align*}
where $V(x):=\max_{1 \le i \le m} g_i(x)$.
%
\end{lem}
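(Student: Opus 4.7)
The plan is to establish the set equality $X = \{x \in \Lambda : V(x) < 0\}$ by the standard technique of demonstrating mutual inclusion. The underlying observation is purely logical: the maximum of finitely many real numbers is strictly negative if and only if every one of those numbers is strictly negative. Since the indexing set $\{1,\dots,m\}$ is finite, the maximum in the definition of $V$ is attained at some index, so strict inequalities transfer cleanly in both directions.

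For the forward inclusion, I would take an arbitrary $x \in X$, which by definition gives $x \in \Lambda$ together with $g_i(x) < 0$ for every $1 \le i \le m$. Because the maximum of finitely many strictly negative numbers is strictly negative, $V(x) = \max_{1 \le i \le m} g_i(x) < 0$, placing $x$ in the sublevel set $\{x \in \Lambda : V(x) < 0\}$. For the reverse inclusion, I would fix $x \in \Lambda$ with $V(x) < 0$; then for each $1 \le i \le m$ the trivial bound $g_i(x) \le \max_{1 \le j \le m} g_j(x) = V(x) < 0$ yields $g_i(x) < 0$, hence $x \in X$.

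There is no real obstacle here: the statement amounts to unfolding the definition of the pointwise maximum over a finite index set. The only point requiring a small amount of care is to preserve \emph{strict} inequality in both directions, which is automatic because $V(x)$ coincides with $g_{i^*}(x)$ for some maximizing index $i^* \in \{1,\dots,m\}$, and the inequality $g_i(x) \le V(x)$ holds for every $i$.
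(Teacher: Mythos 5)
Your proof is correct and follows essentially the same route as the paper's: both establish the set equality by mutual inclusion, using that the finite maximum is strictly negative exactly when every $g_i(x)$ is, and that $g_i(x) \le \max_{1 \le j \le m} g_j(x)$ for the reverse direction. Nothing further is needed.
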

\ifthenelse{\boolean{longver}}{%
\begin{proof}
Suppose $y \in X$ then $g_i(y) < 0 \text{ for all } 1 \le i \le m$ and hence $V(y)=\max_{1 \le i \le m} g_i(y)<0$. Hence, $y \in \{x \in \Lambda: V(x)<0 \}$ implying that $X \subseteq \{x \in \Lambda: V(x)<0 \}$. On the other hand suppose $y \in \{x \in \Lambda: V(x)<0 \}$. Then $V(y)=\max_{1 \le i \le m} g_i(y)<0$ implying $g_i(y) < 0 \text{ for all } 1 \le i \le m$ and hence $y \in X$. Therefore $\{x \in \Lambda: V(x)<0 \}  \subseteq  X$. Hence $\{x \in \Lambda: V(x)<0 \}  =	  X$. 
\end{proof}
	    }{%
	    \begin{proof}
Follows trivially~\cite{jones2023sublevel}.
	    \end{proof}
    }

It is now clear from Lemma~\ref{lem: writing semialg as a single sublevel} and Theorems~\ref{thm: uniform convegence mplies H convergence} and~\ref{thm: close in L1 implies close in V norm strict sublevel set} that in order to approximate the set in Eq.~\eqref{eq: semialg sets} we must solve Opt.~\eqref{opt: Linf 2} or Opt.~\eqref{opt: L2} for $V(x)=\max_{1 \le i \le m} g_i(x)$. 

WLOG we assume $\Lambda \subset \R^n$ is a ball with sufficiently large radius, that is $\Lambda=\{x \in \R^n: ||x||_2<r\}$. We now propose the following SOS tightening of Opt.~\eqref{opt: Linf 2} for $V(x):=\max_{1 \le i \le m} g_i(x)$,
\ifthenelse{\boolean{longver}}{
\begin{align} \label{opt: SOS Haus intersections}
&	(J_d^*,P_d^*,\gamma_d^*) \in \arg \inf_{J_d \in \R_d[x],
	P_d \in \R_d[x], \gamma \in \R } \gamma \qquad 	\text{ such that } \\ \nonumber
& (g_i(x)  \hspace{-0.05cm} - \hspace{-0.05cm} P_d(x)) \hspace{-0.05cm}  -  \hspace{-0.05cm} s_{i,1}(x)(r^2 \hspace{-0.05cm}  - \hspace{-0.05cm}  ||x||_2^2) \hspace{-0.05cm} \\ \nonumber 
& \hspace{1cm} - \hspace{-0.05cm} \sum_{j=1}^m \hspace{-0.05cm}  s_{i,j,2}(x)(g_i(x) \hspace{-0.05cm}  - \hspace{-0.05cm} g_j(x)) \hspace{-0.05cm}  \in \hspace{-0.05cm}  \sum_{SOS}^d  \text{ for all } 1 \le i \le m, \\ \nonumber
& J_d(x)-g_i(x) - s_{i,3}(x)(r^2 - ||x||_2^2)  \in \sum_{SOS}^d  \text{ for all } 1 \le i \le m,  \\ \nonumber
& \gamma -( J_d(x)-P_d(x)) -s_4(x)(r^2 - ||x||_2^2)  \in \sum_{SOS}^d,\\ \nonumber
& s_{i,1}(x)  \hspace{-0.05cm} \in  \hspace{-0.05cm} \sum_{SOS}^d, \text{ } s_{i,j,2}(x)  \hspace{-0.05cm}  \in  \hspace{-0.05cm} \sum_{SOS}^d,  \text{ } s_{i,3}(x)   \hspace{-0.05cm} \in  \hspace{-0.05cm} \sum_{SOS}^d,\\ \nonumber
& \hspace{3cm}  s_{4}(x)   \hspace{-0.05cm} \in  \hspace{-0.05cm} \sum_{SOS}^d   \text{for all } i,j   \hspace{-0.05cm} \in  \hspace{-0.05cm} \{1,...,m\}.
\end{align}
	    }{
	    \begin{align} \label{opt: SOS Haus intersections}
	    	&	(J_d^*,P_d^*,\gamma_d^*) \in \arg \inf_{J_d \in \R_d[x],
	    		P_d \in \R_d[x], \gamma \in \R,s_{i,1},s_{i,j,2},s_{i,3},s_4 \in \sum_{SOS}^d } \{\gamma\}  \\ \nonumber
	    	& \text{such that: } (g_i(x)  \hspace{-0.05cm} - \hspace{-0.05cm} P_d(x)) \hspace{-0.05cm}  -  \hspace{-0.05cm} s_{i,1}(x)(r^2 \hspace{-0.05cm}  - \hspace{-0.05cm}  ||x||_2^2) \hspace{-0.05cm} \\ \nonumber 
	    	& \hspace{1cm} - \hspace{-0.05cm} \sum_{j=1}^m \hspace{-0.05cm}  s_{i,j,2}(x)(g_i(x) \hspace{-0.05cm}  - \hspace{-0.05cm} g_j(x)) \hspace{-0.05cm}  \in \hspace{-0.05cm}  \sum_{SOS}^d  \text{ for all } 1 \le i \le m, \\ \nonumber
	    	& J_d(x)-g_i(x) - s_{i,3}(x)(r^2 - ||x||_2^2)  \in \sum_{SOS}^d  \text{ for all } 1 \le i \le m,  \\ \nonumber
	    	& \gamma -( J_d(x)-P_d(x)) -s_4(x)(r^2 - ||x||_2^2)  \in \sum_{SOS}^d.
	    \end{align}
}

In a similar way to how we tightened Opt.~\eqref{opt: Linf 2} to get Opt.~\eqref{opt: SOS Haus intersections} we next tighten Opt.~\eqref{opt: L2} for $V(x):=\max_{1 \le i \le m} g_i(x)$ and some $\Lambda \subseteq \{x \in \R^n: ||x||_2<r\}$ to get,
\ifthenelse{\boolean{longver}}{
\begin{align} \label{opt: SOS vol intersection}
	J_d^* \in & \arg \inf_{J_d \in \R_d[x]}  \int_\Lambda J_d(x) dx \qquad \text{ such that }\\ \nonumber
	& J_d(x) - g_i(x) - s_i(x)(r^2 - ||x||_2^2) \in \sum_{SOS}^d   \text{ for } 1 \le i \le m, \\ \nonumber
		&  s_i(x) \in \sum_{SOS}^d \text{ for } 1 \le i \le m.
\end{align}
	   }{
	   \begin{align} \label{opt: SOS vol intersection}
	   	J_d^* \in & \arg \inf_{J_d \in \R_d[x],s_i(x) \in \sum_{SOS}^d}  \int_\Lambda J_d(x) dx \qquad \text{ such that }\\ \nonumber
	   	& J_d(x) - g_i(x) - s_i(x)(r^2 - ||x||_2^2) \in \sum_{SOS}^d   \text{ for } 1 \le i \le m.
	   \end{align}
    }

\subsection{Approximation of Unions of Semialgebraic Sets} \label{subsec: approx of uniion of semialg}
In this subsection we solve the problem of approximating the union of semialgebraic sets, 
\begin{align} \label{set: union strict}
X= \cup_{i=1}^{m_1} \{x \in \Lambda: g_{i,j}(x) < 0 \text{ for all } 1 \le j \le m_2 \},
\end{align}
 where $g_{i,j} \in \R[x]$ for $1 \le i \le m_1$ and $1 \le j \le m_2$. For simplicity we will only consider the case $m_2=1$.

 %
 
  In the following lemma we show that unions of semialgebraic sets can be written as a sublevel set of a single function.
\begin{lem}[Unions of sublevel sets in single sublevel set form]  \label{lem: writing union semialg as a single sublevel}
	Consider $X=\cup_{i=1}^m \{x \in \Lambda: g_i(x) < 0 \}$ then 
	\begin{align*}
		X=\{x \in \Lambda: V(x)<0 \},
	\end{align*}
	where $V(x):=\min_{1 \le i \le m} g_i(x)$.
%
\end{lem}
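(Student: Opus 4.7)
The plan is to prove the set equality $X = \{x \in \Lambda : V(x) < 0\}$ by establishing both inclusions, exactly mirroring the structure used in Lemma~\ref{lem: writing semialg as a single sublevel} for intersections, but with the roles of $\max$ and $\min$ swapped to reflect the dual logical structure of unions versus intersections.

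First I would prove the forward inclusion $X \subseteq \{x \in \Lambda : V(x) < 0\}$. Pick any $y \in X$. By definition of the union in Eq.~\eqref{set: union strict} (with $m_2 = 1$), there exists some index $i_0 \in \{1,\dots,m\}$ such that $y \in \Lambda$ and $g_{i_0}(y) < 0$. Since $V(y) = \min_{1 \le i \le m} g_i(y) \le g_{i_0}(y) < 0$, we conclude $y \in \{x \in \Lambda : V(x) < 0\}$.

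Next I would prove the reverse inclusion $\{x \in \Lambda : V(x) < 0\} \subseteq X$. Let $y \in \Lambda$ satisfy $V(y) < 0$. Because the minimum of finitely many real numbers is attained, there exists $i_0 \in \{1,\dots,m\}$ with $g_{i_0}(y) = V(y) < 0$, so $y$ belongs to the $i_0$-th set in the union defining $X$, and therefore $y \in X$.

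No step here presents a genuine obstacle: the argument is purely logical, relying only on the fact that $\min_i g_i(y) < 0$ is equivalent to the existence of some $i$ with $g_i(y) < 0$, which is the dual of the identity used in Lemma~\ref{lem: writing semialg as a single sublevel}. The only thing to be mildly careful about is the strict inequality (so one must use that the minimum is attained over a finite index set, which is immediate), and membership in $\Lambda$, which is preserved on both sides of the equivalence by construction.
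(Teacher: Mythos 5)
Your proof is correct and is precisely the ``similar argument'' to Lemma~\ref{lem: writing semialg as a single sublevel} that the paper invokes without writing out: two inclusions, with $\min$ in place of $\max$ and existence of an index replacing universality. Nothing further is needed.
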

\begin{proof}
	Follows by a similar argument as Lem.~\ref{lem: writing semialg as a single sublevel}.
\end{proof}
To approximate sets of the form given in Eq.~\eqref{set: union strict} we now propose the following SOS tightening of Opt.~\eqref{opt: Linf 2} for $\Lambda=\{x \in \R^n: ||x||_2<r\}$ and $V(x):=\min_{1 \le i \le m} g_i(x)$, 
\ifthenelse{\boolean{longver}}{
\begin{align} \label{opt: SOS Haus Unions}
	&	(J_d^*,P_d^*,\gamma_d^*) \in \arg \inf_{J_d \in \R_d[x],
		P_d \in \R_d[x], \gamma \in \R} \gamma \qquad 	\text{ such that } \\ \nonumber
	& (g_i(x)  \hspace{-0.05cm} - \hspace{-0.05cm} P_d(x)) \hspace{-0.05cm}  -  \hspace{-0.05cm} s_{i,1}(x)(r^2 \hspace{-0.05cm}  - \hspace{-0.05cm}  ||x||_2^2)   \in \hspace{-0.05cm}  \sum_{SOS}^d \text{ for all } 1 \le i \le m, \\ \nonumber
	& (J_d(x)-g_i(x)) - s_{i,2}(x)(r^2 - ||x||_2^2)  \hspace{-0.05cm}   \\ \nonumber
	&  \hspace{1cm} - \hspace{-0.05cm} \sum_{j=1}^m \hspace{-0.05cm}  s_{i,j,3}(x)(g_j(x) \hspace{-0.05cm}  - \hspace{-0.05cm} g_i(x)) \hspace{-0.05cm}  \in \sum_{SOS}^d  \text{ for all } 1 \le i \le m,  \\ \nonumber
	& \gamma -( J_d(x)-P_d(x)) -s_4(x)(r^2 - ||x||_2^2)  \in \sum_{SOS}^d,\\ \nonumber
	& s_{i,1}(x)  \hspace{-0.05cm} \in  \hspace{-0.05cm} \sum_{SOS}^d, \text{ } s_{i,2}(x)   \hspace{-0.05cm} \in  \hspace{-0.05cm} \sum_{SOS}^d, \text{ } s_{i,j,3}(x)  \hspace{-0.05cm}  \in  \hspace{-0.05cm} \sum_{SOS}^d,  \text{ } ,\\ \nonumber
	&   \hspace{3cm} s_{4}(x) \in  \hspace{-0.05cm} \sum_{SOS}^d   \text{for all } i,j   \hspace{-0.05cm} \in  \hspace{-0.05cm} \{1,...,m\}.
\end{align}
	    }{
	    \begin{align} \label{opt: SOS Haus Unions}
	    	&	(J_d^*,P_d^*,\gamma_d^*) \in \arg \inf_{J_d \in \R_d[x],
	    		P_d \in \R_d[x], \gamma \in \R,s_{i,1},s_{i,j,2},s_{i,3},s_4 \in \sum_{SOS}^d} \{\gamma\}  \\ \nonumber
	    	& \text{such that: } (J_d(x)-g_i(x)) - s_{i,2}(x)(r^2 - ||x||_2^2)  \hspace{-0.05cm}   \\ \nonumber
	    	&  \hspace{1cm} - \hspace{-0.05cm} \sum_{j=1}^m \hspace{-0.05cm}  s_{i,j,3}(x)(g_j(x) \hspace{-0.05cm}  - \hspace{-0.05cm} g_i(x)) \hspace{-0.05cm}  \in \sum_{SOS}^d  \text{ for all } 1 \le i \le m,  \\ \nonumber
	    	& (g_i(x)  \hspace{-0.05cm} - \hspace{-0.05cm} P_d(x)) \hspace{-0.05cm}  -  \hspace{-0.05cm} s_{i,1}(x)(r^2 \hspace{-0.05cm}  - \hspace{-0.05cm}  ||x||_2^2)   \in \hspace{-0.05cm}  \sum_{SOS}^d \text{ for all } 1 \le i \le m, \\ \nonumber
	    	& \gamma -( J_d(x)-P_d(x)) -s_4(x)(r^2 - ||x||_2^2)  \in \sum_{SOS}^d.
	    \end{align}
    }

We next propose the following SOS tightening of Opt.~\eqref{opt: L2 2} for $V(x):=\min_{1 \le i \le m} g_i(x)$ and some $\Lambda \subseteq \{x \in \R^n: ||x||_2<r\}$,
\ifthenelse{\boolean{longver}}{
	\begin{align} \label{opt: SOS vol union}
	J_d^* \in & \arg \sup_{J_d \in \R_d[x]}  \int_\Lambda J_d(x) dx \qquad \text{ such that }\\ \nonumber
	& g_i(x) - J_d(x) - s_i(x)(r^2 - ||x||_2^2) \in \sum_{SOS}^d   \text{ for } 1 \le i \le m, \\ \nonumber 
	&  s_i (x)\in \sum_{SOS}^d \text{ for } 1 \le i \le m.
\end{align}    }{
\begin{align} \label{opt: SOS vol union}
	J_d^* \in & \arg \sup_{J_d \in \R_d[x],s_i \in \sum_{SOS}^d}  \int_\Lambda J_d(x) dx \qquad \text{ such that }\\ \nonumber
	& g_i(x) - J_d(x) - s_i(x)(r^2 - ||x||_2^2) \in \sum_{SOS}^d   \text{ for } 1 \le i \le m.
\end{align}
    }

\ifthenelse{\boolean{longver}}{
 Note, solving Opt.~\eqref{opt: SOS Haus Unions} results in both a lower ($P_d$) and upper ($J_d$) approximation of $V(x):=\min_{1 \le i \le m} g_i(x)$, whereas, solving Opt.~\eqref{opt: SOS vol union} only results in a lower approximation. Unfortunately, Theorems~\ref{thm: uniform convegence mplies H convergence} and~\ref{thm: close in L1 implies close in V norm strict sublevel set} only apply for upper function approximations and hence we are unable to use these theorems to show Opt.~\eqref{opt: SOS vol union} yields an arbitrary accurate sublevel set approximation of the set given in Eq.~\eqref{set: union strict}. We could construct a similar SOS problem to Opt.~\eqref{opt: SOS vol union} that results in a upper approximation in a non-conservative way using a similar argument as in Eq.~\eqref{non conservative constraint} but this would result in more SOS decision variables. Alternatively, we will later show that Opt.~\eqref{opt: SOS vol union} yields a sublevel approximation to the non-strict/relaxed version the set given of Eq.~\eqref{set: union strict}. That is, Opt.~\eqref{opt: SOS vol union} can be used to approximate the following set,
 \begin{align} \label{set: union non-strict}
 	X= \cup_{i=1}^{m} \{x \in \Lambda: g_{i}(x) \le 0\},
 \end{align}
 where $g_i \in \R[x]$ for $1 \le i \le m$.
}{
We will also consider the problem of approximating unions of non-strict semialegebriac sets such as
 \begin{align} \label{set: union non-strict}
	X= \cup_{i=1}^{m} \{x \in \Lambda: g_{i}(x) \le 0\},
\end{align}
   }

%

\subsection{Approximation of Minkowski Sums} \label{subsec: Minkowski Sum}
\begin{defn}[Minkowski Sum]
Given two sets $A,B \subset \R^n$ their Minkowski sum is defined as
\begin{equation} \label{eq:mink sum}
	A \oplus B = \{a+b \in \R^n : a \in A \text{ and } b \in B \}.
\end{equation}
\end{defn}
We next consider the problem of approximating the set $X=A \oplus B$, where $A,B \subset \R^n$ can be written as sublevel sets. That is we consider the problem of approximating the following,
\begin{align} \label{set: mink sum}
	X=\{x \in \Lambda: g_1(x) \le 0 \} \oplus \{x \in \Lambda: g_2(x) \le 0 \},
\end{align}
where $g_1,g_2 \in C^\infty$. This is non-restrictive since \ifthenelse{\boolean{longver}}{Prop.~\ref{prop: exietence of smooth sublevel set} (found in the {appendix})}{Prop.~1 from~\cite{schlosser2021converging}} shows any compact set can be written as a sublevel set and Lemmas~\ref{lem: writing semialg as a single sublevel} and~\ref{lem: writing union semialg as a single sublevel} give this sublevel set in an analytical form for intersections or unions of semialgebraic sets. 

In the following lemma we show sets satisfying Eq.~\eqref{set: mink sum} can be written as the sublevel set of a single function. 
\begin{lem}[Minkowski sums in sublevel set form] \label{lem: Mink sum of sublevel sets}
	Consider $X:=\{x \in \Lambda: g_1(x) \le 0 \} \oplus \{x \in \Lambda: g_2(x) \le 0 \}  $, where $\Lambda \subset \R^n$ is a compact set and $g_1,g_2 \in C(\Lambda,\R)$, then 
	\begin{align*}
		X=\{x \in \Lambda: V(x) \le 0 \},
	\end{align*}
	where $V(x):=\inf_{w  \in  \{z \in \Lambda: g_2(z) \le 0 \} } g_1(x-w)$.
%
\end{lem}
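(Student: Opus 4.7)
The plan is to prove $X = \{x \in \Lambda : V(x) \le 0\}$ by establishing the two set inclusions separately, relying on the continuity of $g_1$ and $g_2$ together with the compactness of $\Lambda$.

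For the inclusion $X \subseteq \{x \in \Lambda : V(x) \le 0\}$, I would take an arbitrary $x \in X$ and use the definition of Minkowski sum to write $x = a + b$ with $a,b \in \Lambda$ satisfying $g_1(a) \le 0$ and $g_2(b) \le 0$. The point $b$ is therefore feasible for the infimum defining $V(x)$, so $V(x) \le g_1(x - b) = g_1(a) \le 0$. This direction is entirely routine and requires no limiting argument.

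For the reverse inclusion $\{x \in \Lambda : V(x) \le 0\} \subseteq X$, I would fix $x \in \Lambda$ with $V(x) \le 0$ and try to produce an explicit decomposition $x = a + b$ witnessing $x \in X$. The natural candidate is $b = w^\ast$, an argmin of the infimum, and $a = x - w^\ast$. The key step here is justifying that the infimum $\inf_{w \in W} g_1(x-w)$ is actually attained, where $W := \{z \in \Lambda : g_2(z) \le 0\}$. Since $g_2$ is continuous and $\Lambda$ is compact, $W$ is a closed subset of a compact set, hence compact; since $g_1$ is continuous, the extreme value theorem guarantees the existence of some $w^\ast \in W$ with $g_1(x-w^\ast) = V(x) \le 0$. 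Then $x = (x-w^\ast) + w^\ast$, and since $g_1(x - w^\ast) \le 0$ and $g_2(w^\ast) \le 0$, we conclude $x \in \{z \in \Lambda : g_1(z) \le 0\} \oplus \{z \in \Lambda : g_2(z) \le 0\} = X$.

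The main obstacle to keep in mind is ensuring that both components $x - w^\ast$ and $w^\ast$ of the decomposition actually lie in $\Lambda$, which is required by the construction of the two sublevel sets being summed. The $w^\ast \in \Lambda$ inclusion is immediate from $w^\ast \in W \subseteq \Lambda$, but $x - w^\ast \in \Lambda$ needs either an implicit assumption that $\Lambda$ is large enough to contain all points of interest (which is the standing convention in Section~\ref{sec: SOS programs for set approx}, where $\Lambda$ is taken to be a sufficiently large ball) or a minor tightening of the statement. Assuming this convention, the proof closes cleanly. No compactness argument is needed in the forward direction; the extreme value theorem is the only nontrivial analytic tool invoked.
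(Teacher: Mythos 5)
Your proof is correct and follows essentially the same route as the paper's: the forward inclusion by bounding the infimum with the witness $b$, and the reverse inclusion by using compactness of $\{z \in \Lambda : g_2(z) \le 0\}$ (a closed sublevel set of a continuous function inside a compact set) together with the extreme value theorem to attain the infimum at some $w^\ast$. Your remark that $x - w^\ast \in \Lambda$ (and, in the forward direction, that $a+b \in \Lambda$) requires the standing ``sufficiently large $\Lambda$'' convention is a fair observation --- the paper's own proof silently passes over exactly the same point.
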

\ifthenelse{\boolean{longver}}{
	\begin{proof}
		Suppose $x \in X$ then there exists $z_1 \in \{x \in \Lambda: g_1(x) \le 0 \} $ and $z_2 \in \{x \in \Lambda: g_2(x) \le 0 \}  $ such that $x =z_1+z_2$. Hence,
		\begin{align*}
			V(x)= \inf_{w  \in  \{z \in \Lambda: g_2(z) \le 0 \} } g_1(x-w) \le g_1(x-z_2)= g_1(z_1) \le 0,
		\end{align*}
		since $z_1 \in \{x \in \Lambda: g_1(x) \le 0 \}  $. Therefore, $x \in \{y \in \Lambda: V(y) \le 0 \}$ implying $X \subseteq  \{x \in \Lambda: V(x) \le 0 \}$.
		
		On the other hand suppose $x \in \{y \in \Lambda: V(y) \le 0 \}$. Note that by Lem.~\ref{lem: Continuous functions have compact sublevel sets} {(found in the appendix)} it follows that $\{z \in \Lambda: g_2(z) \le 0 \}$ is compact. Now, by the extreme value theorem, continuous functions attain their extreme values over compact sets implying that there exists $z_2 \in \arg \inf_{w  \in  \{z \in \Lambda: g_2(z) \le 0 \} } g_1(x-w)$ such that $g_1(x-z_2)=V(x)$. Let $z_1=x-z_2$. Because $V(x) \le 0$ it follows that $g_1(z_1)= g_1(x-z_2)=V(x) \le 0$. Hence, $z_1 \in \{x \in \Lambda: g_1(x)  \le 0 \} $. Therefore it follows that $x \in X$ and hence $X \subset \{y \in \Lambda: V(y) \le 0 \}$. Thus $X=\{x \in \Lambda: V(x)  \le 0 \}$.
	\end{proof}
	    }{
	    \begin{proof}
	    	Follows by a similar argument as Lemma~\ref{lem: writing semialg as a single sublevel} with full technical details given in the extended Arxiv paper~\cite{jones2023sublevel}.
	    \end{proof}
	    }
For simplicity we next consider the special case of approximating $A \oplus B$ where $A$ is a union of sublevel sets and $B$ is a semialgebraic set. That is we consider the problem of approximating the following set,
\begin{align} \label{set: mink union and int}
	X:=& \cup_{i=1}^{m_1} \{x \in \Lambda: g_{1,i}(x) \le 0 \} \\ \nonumber
	 & \qquad \qquad  \oplus \{x \in \Lambda: g_{2,j}(x) \le 0 \text{ for } 1 \le j \le m_2 \},
\end{align}
where $g_{1,i}, g_{2,j} \in \R[x]$ for all $1 \le i \le m_1$ and $1 \le j \le m_2$. Other cases where one or both of $A$ or $B$ is a union or intersection of semialgebraic sets can be handled by a similar methodology. Now, by similar arguments to Lemmas~\ref{lem: writing semialg as a single sublevel} and~\ref{lem: writing union semialg as a single sublevel} and the use of Lemma~\ref{lem: Mink sum of sublevel sets} it is clear that $X$, given in Eq.~\eqref{set: mink union and int}, is such that $X=\{x \in \Lambda: V(x) \le 0 \}$ where
\begin{align} \label{fun: V for mink}
V(x)=\inf_{w  \in  \{z \in \Lambda: \max_{1 \le j \le m_2} g_{2,j}(z) \le 0 \} } \min_{1 \le i \le m_1} g_{1,i}(x-w).
\end{align}

Unlike in Subsections~\ref{subsec: approx semialg set} and~\ref{subsec: approx of uniion of semialg}, in this subsection we do not provide a SOS tightening of Opt.~\eqref{opt: Linf 2} due to the complexity of using SOS to enforce the constraint $V(x)\le J_d(x)$ when $V$ is given in Eq.~\eqref{fun: V for mink}\ifthenelse{\boolean{longver}}{
	    }{ (this constraint can be found in the extended Arxiv version~\cite{jones2023converse}).} \ifthenelse{\boolean{longver}}{
	Indeed, we could increase the problems dimension and enforce,
\begin{align*}
& J_d(x) - g_{1,i}(x-w) - s_{i,0}(x,u,w)(g_{1,i}(x-u) - g_{1,i}(x-w)) \\
& -\sum_{j=1}^{m_2} s_{i,j,1}(x,w,u)(g_{1,j}(x-w) -g_{1,i}(x-w))  \\
& + \sum_{j=1}^{m_2} s_{i,j,2}(x,w,u)g_{2,j}(w)+ \sum_{j=1}^{m_2} s_{i,j,3}(x,w,u)g_{2,j}(u) \\
& - s_{i,4}(x,w,u)(r^2 - ||(x,u,w)||_2^2) \in \sum_{SOS} \text{ for } 1 \le i \le m_1,\\
& s_{i,0},s_{i,j,1}, s_{i,j,2}, s_{i,j,3}, s_{i,4} \in \sum_{SOS} \text{ for } 1 \le i \le m_1 \text{ \& } 1 \le j\le m_2
\end{align*} 
However, this approach results in a very large SOS optimization problem.}{
   }Fortunately, it is relatively easy to enforce the constraint $J_d(x) \le V(x)$. This is the only constraint that is required to be tightened in Opt.~\eqref{opt: L2 2}. Thus, to approximate sets of the form given in Eq.~\eqref{set: mink union and int} we now propose the following SOS tightening of Opt.~\eqref{opt: L2 2} for {$V(x):=\inf_{w  \in  \{z \in \Lambda: \max_{1 \le j \le m_2} g_{2,j}(z) \le 0 \} } \min_{1 \le i \le m_1} g_{1,i}(x-w)$} and some $\Lambda \subseteq \{x \in \R^n: ||x||_2<r\}$,
\ifthenelse{\boolean{longver}}{
\begin{align} \label{opt: SOS vol mink sum}
	J_d^* \in & \arg \sup_{J_d \in \R_d[x]}  \int_\Lambda J_d(x) dx \qquad \text{ such that }\\ \nonumber
	& g_{1,i}(x-w) - J_d(x) - s_{i,1}(x,w)(r^2 - ||(x,w)||_2^2)\\ \nonumber
	& \qquad +\sum_{j=1}^{m_2} s_{i,j,2}(x,w)g_{2,j}(w)  \in \sum_{SOS}^d   \text{ for } 1 \le i \le m_1, \\ \nonumber
		&  s_{i,1}(x) \in \sum_{SOS}^d \text{ for } 1 \le i \le m_1, \\ \nonumber
	& s_{i,j,2}(x) \in \sum_{SOS}^d \text{ for } 1 \le i \le m_1 \text{ and } 1 \le j \le m_2.
\end{align}	    }{
\begin{align} \label{opt: SOS vol mink sum}
	J_d^* \in & \arg \sup_{J_d \in \R_d[x],s_{i,1},s_{i,j,2}\in \sum_{SOS}^d}  \int_\Lambda J_d(x) dx \qquad \text{ such that }\\ \nonumber
	& g_{1,i}(x-w) - J_d(x) - s_{i,1}(x,w)(r^2 - ||(x,w)||_2^2)\\ \nonumber
	& \qquad +\sum_{j=1}^{m_2} s_{i,j,2}(x,w)g_{2,j}(w)  \in \sum_{SOS}^d   \text{ for } 1 \le i \le m_1.
\end{align}
    }

\subsection{Approximation of Pontryagin Differences} \label{subsec: Pontryagin}
\begin{defn}[Pontryagin Difference]
	Given two sets $A,B \subset \R^n$ their Pontryagin difference is defined as
	\begin{equation} \label{eq: Pontry diff}
		A \ominus B = \{a \in A: a+b \in A \text{ for all } b \in B   \}.
	\end{equation}
\end{defn}
We next show that the Pontryagin difference of two sublevel sets can be written as a sublevel set of a single function.
\begin{lem}[Pontryagin difference in sublevel set form] \label{lem: Pontryagin difference of sublevel sets}
	Consider $X:=\{x \in \Lambda: g_1(x) < 0 \} \ominus \{x \in \Lambda: g_2(x) \le 0 \}  $, where $\Lambda \subset \R^n$ is a compact set and $g_1,g_2 \in C(\Lambda,\R)$, then 
	\begin{align} \label{pfeq: strict sub mink }
		X=\{x \in \Lambda: V(x) < 0 \},
	\end{align}
	where $V(x):=\sup_{w  \in  \{z \in \Lambda: g_2(z) \le 0 \} } g_1(x+w)$.
	%
\end{lem}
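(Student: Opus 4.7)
The plan is to mirror the two-inclusion template used in Lem.~\ref{lem: Mink sum of sublevel sets}, adapted to cope with the fact that the inequality defining $X$ is now strict. Write $\Omega := \{z \in \Lambda : g_2(z) \le 0\}$. Since $g_2$ is continuous and $\Lambda$ is compact, $\Omega$ is a closed subset of the compact set $\Lambda$ and hence compact; combined with continuity of $g_1$, the extreme value theorem then ensures that for each $x \in \Lambda$ the supremum defining $V(x)$ is attained at some $w^\star(x) \in \Omega$, i.e.\ $V(x) = g_1(x + w^\star(x))$.

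For the forward inclusion $X \subseteq \{x \in \Lambda : V(x) < 0\}$, I would take $x \in X$. By the Pontryagin-difference definition~\eqref{eq: Pontry diff}, for every $w \in \Omega$ the point $x+w$ lies in $\{y \in \Lambda : g_1(y) < 0\}$, so $g_1(x+w) < 0$. Specializing to $w = w^\star(x)$ yields $V(x) = g_1(x + w^\star(x)) < 0$, which is the strict inequality required.

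Conversely, suppose $V(x) < 0$. Then $g_1(x+w) \le V(x) < 0$ for every $w \in \Omega$, placing each $x + w$ in $\{y \in \Lambda : g_1(y) < 0\}$; evaluating at $w = 0 \in \Omega$ (the standing assumption $g_2(0) \le 0$ that accompanies Pontryagin differences in practice) also gives $g_1(x) < 0$, so $x$ itself lies in $\{y \in \Lambda : g_1(y) < 0\}$ and therefore $x \in X$ by~\eqref{eq: Pontry diff}.

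The main obstacle, relative to Lem.~\ref{lem: Mink sum of sublevel sets}, is keeping the strict inequalities alive. Merely knowing $\sup_{w \in \Omega} g_1(x+w) \le 0$ would be insufficient to conclude $V(x) < 0$; it is compactness of $\Omega$ together with continuity of $g_1$, forcing the supremum to be \emph{attained}, that lets the strict sign transfer from $g_1$ to $V$ in the forward direction and back in the reverse direction.
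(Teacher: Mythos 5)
Your proof follows essentially the same route as the paper's: split into two inclusions, and use compactness of $\Omega=\{z\in\Lambda: g_2(z)\le 0\}$ (closed subset of a compact set, by continuity of $g_2$) together with the extreme value theorem so that the supremum defining $V$ is attained, which is exactly what lets the strict inequality $g_1(x+w)<0$ for all $w\in\Omega$ pass to $V(x)<0$ in the forward inclusion. That is the paper's argument, and your identification of attainment as the crux is right (though note it is only needed in the forward direction; in the reverse direction $g_1(x+w)\le \sup_{w\in\Omega}g_1(x+w)=V(x)<0$ needs no attainment).

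The one place you diverge is in the reverse inclusion, where you invoke ``$0\in\Omega$'' via a supposed standing assumption $g_2(0)\le 0$. No such hypothesis appears in the lemma, so you have smuggled in an assumption. That said, you have put your finger on a real issue: the paper's Definition of $A\ominus B$ in Eq.~\eqref{eq: Pontry diff} requires $a\in A$ in addition to $a+b\in A$ for all $b\in B$, and $V(x)<0$ alone does not force $g_1(x)<0$ unless $0\in B$ (take $g_1(x)=1-(x-5)^2$, $B=\{2\}$: then $x=5$ satisfies $V(5)<0$ but $g_1(5)>0$). The paper's own proof of the reverse inclusion silently ignores the ``$a\in A$'' clause, so it is your version, not the paper's, that is internally consistent --- but only under the extra hypothesis you added. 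Either the lemma needs $0\in\{z\in\Lambda: g_2(z)\le 0\}$ as a hypothesis, or the Pontryagin difference should be defined without the $a\in A$ clause; as written, neither your proof (without the invented assumption) nor the paper's closes this gap.
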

\ifthenelse{\boolean{longver}}{
	\begin{proof}
		We first show Eq.~\eqref{pfeq: strict sub mink }. Suppose $x \in X$ then it must follow that $g_1(x+w)<0$ for all $w \in \{z \in \Lambda: g_2(z) \le 0 \} $. Note that by Lem.~\ref{lem: Continuous functions have compact sublevel sets} {(found in the appendix)} it follows that $\{z \in \Lambda: g_2(z) \le 0 \}$ is compact. Now, by the extreme value theorem continuous functions attain their extreme values over compact sets implying that there exists $w^*  \in  \{z \in \Lambda: g_2(z) \le 0 \} $ such that $g_1(x+w^*)=\sup_{w  \in  \{z \in \Lambda: g_2(z) \le 0 \} } g_1(x+w)$. Hence $V(x)=\sup_{w  \in  \{z \in \Lambda: g_2(z) \le 0 \} } g_1(x+w)=g_1(x+w^*)<0$ implying that $x \in \{z \in \Lambda: V(z) < 0 \}$ and hence $X \subseteq \{x \in \Lambda: V(x) < 0 \}$.
		
		On the other hand if $x \in \{z \in \Lambda: V(z) < 0 \}$ it follows that $\sup_{w  \in  \{z \in \Lambda: g_2(z) \le 0 \} } g_1(x+w)<0$. Hence, $g_1(x+w) \le\sup_{w  \in  \{z \in \Lambda: g_2(z) \le 0 \} } g_1(x+w) < 0$ for all $w  \in  \{z \in \Lambda: g_2(z) \le 0 \} $ implying $x+w \in \{x \in \Lambda: g_1(x) <	 0 \} $ for all $w  \in  \{z \in \Lambda: g_2(z) \le 0 \} $ . Thus it follows that $x \in X$.
		%
	\end{proof}
	    }{
	     \begin{proof}
	    	Follows by a similar argument as Lemma~\ref{lem: writing semialg as a single sublevel} with full technical details given in the extended Arxiv paper~\cite{jones2023sublevel}.
	    \end{proof}
	    }

For simplicity we consider the special case of approximating $A \ominus B$ where both $A$ and $B$ are semialgebraic sets. Other cases where one or both of $A$ or $B$ is a union or intersection of semialgebraic sets can be handled by a similar methodology. That is we consider the problem of approximating the following set,
\begin{align} \label{set: pont int and int}
	X:=&  \{x \in \Lambda: g_{1,i}(x) < 0 \text{ for } 1 \le i \le m_1 \} \\ \nonumber
	& \qquad \qquad  \ominus \{x \in \Lambda: g_{2,j}(x) \le 0 \text{ for } 1 \le j \le m_2 \},
\end{align} 
where $g_{1,i}, g_{2,j} \in \R[x]$ for all $1 \le i \le m_1$ and $1 \le j \le m_2$. Other cases where one or both of $A$ or $B$ is a union or intersection of semialgebraic sets can be handled by a similar methodology. Now, by Lemmas~\ref{lem: writing semialg as a single sublevel} and~\ref{lem: Pontryagin difference of sublevel sets} it is clear that $X$, given in Eq.~\eqref{set: pont int and int}, is such that $X=\{x \in \Lambda: V(x) < 0 \}$ where
\begin{align} \label{fun: V for pont} 
	V(x)=\sup_{w  \in  \{z \in \Lambda: \max_{1 \le j \le m_2} g_{2,j}(z) \le 0 \} } \max_{1 \le i \le m_1} g_{1,i}(x+w).
\end{align}
Similarly to Subsection~\ref{subsec: Minkowski Sum}, we do not provide a tightening of the Opt.~\eqref{opt: Linf 2} due to the complexity of using SOS to enforce the constraint $J_d(x) \le V(x)$ when $V$ is given in Eq.~\eqref{fun: V for pont} 

To approximate sets given in Eq.~\eqref{set: pont int and int} we now propose the following SOS tightening of Opt.~\eqref{opt: L2} for {$V(x):=\sup_{w  \in  \{z \in \Lambda: \max_{1 \le j \le m_2} g_{2,j}(z) \le 0 \} } \max_{1 \le i \le m_1} g_{1,i}(x+w)$ and some $\Lambda \subseteq \{x \in \R^n: ||x||_2<r\}$,
\ifthenelse{\boolean{longver}}{
		\begin{align} \label{opt: SOS vol Ponty diff}
		J_d^* \in & \arg \inf_{J_d \in \R_d[x]}  \int_\Lambda J_d(x) dx \qquad \text{ such that }\\ \nonumber
		& J_d(x)- g_{1,i}(x+w)  - s_{i,1}(x,w)(r^2 - ||(x,w)||_2^2)\\ \nonumber
		& \qquad +\sum_{j=1}^{m_2} s_{i,j,2}(x,w)g_{2,j}(w)  \in \sum_{SOS}^d   \text{ for } 1 \le i \le m_1, \\ \nonumber
		&  s_{i,1} \in \sum_{SOS}^d \text{ for } 1 \le i \le m_1, \\ \nonumber
		& s_{i,j,2} \in \sum_{SOS}^d \text{ for } 1 \le i \le m_1 \text{ and } 1 \le j \le m_2.
	\end{align}
    }{
    \begin{align} \label{opt: SOS vol Ponty diff}
    	J_d^* \in & \arg \inf_{J_d \in \R_d[x],s_{i,1},s_{i,j,2} \in \sum_{SOS}^d }  \int_\Lambda J_d(x) dx \qquad \text{ such that }\\ \nonumber
    	& J_d(x)- g_{1,i}(x+w)  - s_{i,1}(x,w)(r^2 - ||(x,w)||_2^2)\\ \nonumber
    	& \qquad +\sum_{j=1}^{m_2} s_{i,j,2}(x,w)g_{2,j}(w)  \in \sum_{SOS}^d   \text{ for } 1 \le i \le m_1.
    \end{align}
    }

\subsection{Approximation of Discrete Points} \label{subsec: discrete points}
In this section we consider the problem of approximating a set of discrete points, $\{x_i\}_{i=1}^N \subset \R^n$. \ifthenelse{\boolean{longver}}{
	We first note that it is possible to write a set of discrete points as a sublevel set of a polynomial without any computation by considering the function $V(x)= \prod_{i=1}^N (x_i - x)^2$. However, this polynomial function has a very large degree, double the number of data points. We next  show that sets of discrete points can be written as a sublevel set of a simpler function. This will allow us to derive an SOS optimization problem to approximate this simple function by a polynomial with relatively small degree, yielding a low degree polynomial sublevel approximation of $\{x_i\}_{i=1}^N \subset \R^n$.
	    }{
	  }
\begin{lem} \label{lem: sublevel set pf discrete points}
Consider $X=\{x_i\}_{i=1}^N \subset \Lambda$. Then
\begin{align}
	X=\{x \in \Lambda: V(x) \le 0 \},
\end{align}
where $V(x)=1-\mathds{1}_{\{x_i\}_{i=1}^N}(x)$.
\end{lem}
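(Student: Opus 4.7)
The proof plan is essentially a direct unwinding of the definition of the indicator function, established by a standard double-inclusion argument, mirroring the style of the proof of Lemma~\ref{lem: writing semialg as a single sublevel}.

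First, I would recall from the notation section that $\mathds{1}_A(x) = 1$ if $x \in A$ and $\mathds{1}_A(x) = 0$ otherwise. Consequently, for $V(x) = 1 - \mathds{1}_{\{x_i\}_{i=1}^N}(x)$, the function takes only the two values $0$ and $1$: specifically, $V(x) = 0$ when $x \in \{x_i\}_{i=1}^N$ and $V(x) = 1$ when $x \notin \{x_i\}_{i=1}^N$. In particular, $V$ never takes a negative value, so the condition $V(x) \le 0$ is equivalent to $V(x) = 0$.

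For the forward inclusion $X \subseteq \{x \in \Lambda : V(x) \le 0\}$, I would take an arbitrary $y \in X = \{x_i\}_{i=1}^N$, note that $y \in \Lambda$ by hypothesis, and compute $V(y) = 1 - \mathds{1}_{\{x_i\}_{i=1}^N}(y) = 1 - 1 = 0 \le 0$. For the reverse inclusion, suppose $y \in \Lambda$ satisfies $V(y) \le 0$; then $1 - \mathds{1}_{\{x_i\}_{i=1}^N}(y) \le 0$, which forces $\mathds{1}_{\{x_i\}_{i=1}^N}(y) \ge 1$. Since the indicator function takes values in $\{0,1\}$, this gives $\mathds{1}_{\{x_i\}_{i=1}^N}(y) = 1$, i.e.\ $y \in \{x_i\}_{i=1}^N = X$.

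There is no real obstacle to this proof; the entire content is unpacking the definition of the indicator function. The only subtlety worth flagging is that $V$ is not continuous (indeed, not even lower semicontinuous) and takes only two values, so unlike the continuous $V$ arising in Lemmas~\ref{lem: Mink sum of sublevel sets} and~\ref{lem: Pontryagin difference of sublevel sets}, there is no need to invoke the extreme value theorem or any compactness argument. This minimal nature of $V$ is precisely what makes the subsequent SOS approximation of $V$ by a low-degree polynomial tractable, which is the point of introducing this representation rather than the naïve $\prod_{i=1}^N \|x - x_i\|_2^2$.
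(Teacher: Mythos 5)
Your proof is correct and takes the same (only) route as the paper, which simply records that the result ``follows trivially''; your double-inclusion argument is exactly the intended unwinding of the indicator function's definition. Nothing is missing.
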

\begin{proof}
	Follows trivially.
\end{proof}
\ifthenelse{\boolean{longver}}{
	We do not provide an SOS tightening for Opt.~\eqref{opt: Linf 2} when $V(x)=1-\mathds{1}_{\{x_i\}_{i=1}^N}(x)$ because in order for us to prove that the resulting polynomial sublevel set approximation can be arbitrarily small with respect to the Hausdorff metric we rely on Thm.~\ref{thm: uniform convegence mplies H convergence}. Thm.~\ref{thm: uniform convegence mplies H convergence} requires that our polynomial approximations of $V$ converge in the $L^\infty$ norm. Unfortunately, this $V$ is not continuous so it is not possible to approximate it uniformly using smooth functions like polynomials. Fortunately, $V$ is an integrable function so it is possible to approximate it by a polynomial in the $L^1$ norm using the following SOS tightening of Opt.~\eqref{opt: L2 2} for some $\{x \in \R^n: ||x||_2<r \}\supseteq \Lambda \supseteq \{x_i\}_{i=1}^N$,
	    }{
	    We do not provide an SOS tightening for Opt.~\eqref{opt: Linf 2} when $V(x)=1-\mathds{1}_{\{x_i\}_{i=1}^N}(x)$ since we cannot approximate this discontinuous function in the $L^\infty$ norm by a polynomial. Fortunately, $V$ is an integrable function so it is possible to approximate it by a polynomial in the $L^1$ norm using the following SOS tightening of Opt.~\eqref{opt: L2 2} for some $\{x \in \R^n: ||x||_2<r \}\supseteq \Lambda \supseteq \{x_i\}_{i=1}^N$,
	    }
\ifthenelse{\boolean{longver}}{
\begin{align} \label{opt: SOS discrete points}
	J_d^* \in & \arg \sup_{J_d \in \R_d[x]}  \int_\Lambda J_d(x) dx \qquad \text{ such that }\\ \nonumber
	& J_d(x_i) <0  \text{ for } 1 \le i \le N, \\ \nonumber 
	&  1-J_d(x) - s_0(x)(r^2 -||x||_2^2) \in \sum_{SOS}^d, \quad s_0(x) \in \sum_{SOS}^d.
\end{align}
	    }{
	    \begin{align} \label{opt: SOS discrete points}
	    	J_d^* \in & \arg \sup_{J_d \in \R_d[x],s_0 \in \sum_{SOS}^d}  \int_\Lambda J_d(x) dx \qquad \text{ such that }\\ \nonumber
	    	& J_d(x_i) <0  \text{ for } 1 \le i \le N, \\ \nonumber 
	    	&  1-J_d(x) - s_0(x)(r^2 -||x||_2^2) \in \sum_{SOS}^d.
	    \end{align}
    }

\section{Convergence of Our Proposed SOS Programs} \label{sec: Convergence of Our Proposed SOS Programs}
\ifthenelse{\boolean{longver}}{
In the previous sections we proposed several SOS optimization problems to approximate various functions, $V$, whose sublevel sets yield several important classes of sets \ifthenelse{\boolean{longver}}{
	(intersections and unions of semialgebraic sets, Minkowski sums, Pontryagin differences and discrete points)
	    }{
	    }. We use Theorems~\ref{thm: uniform convegence mplies H convergence} and~\ref{thm: close in L1 implies close in V norm strict sublevel set} to prove that the sequences of solutions to our SOS problems produce sequences of sublevel sets that each converge to the associated target set in the Hausdorff or volume metric. \ifthenelse{\boolean{longver}}{
	         We begin this section with showing convergence in the Hausdorff metric. 
	    }{
    }
\begin{thm} \label{thm: SOS H convergence}
	It holds that,
		\begin{align} \label{eqn: convergence of SOS to intersection}
		& \lim_{d \to \infty}	D_H \bigg(\{x \in \Lambda: J_d^*(x)<0\} , X \bigg) =0,\\ \label{eq: J_d subset of X}
	&\qquad \qquad 	\{x \in \Lambda: J_d^*(x)<0\} \subseteq X,
	\end{align}
where $\Lambda:=\{x \in \R^n: ||x||_2 \le r\}$ and the set $X$ and the sequence of functions $\{J_d^*\}_{d \in \N} \subset \R[x]$ satisfy either of the following:
	\begin{itemize}
		\item  $X$ is a semialgebraic set, given by Eq.~\eqref{eq: semialg sets}, and $\{J_d^*\}_{d \in \N} \subset \R[x]$ solve the family of $d$-degree SOS optimization problems given in Eq.~\eqref{opt: SOS Haus intersections}.
		\item  $X$ is a union of semialgebraic sets given by Eq.~\eqref{set: union strict} and $\{J_d^*\}_{d \in \N} \subset \R[x]$ solve the family of $d$-degree SOS optimization problems given in Eq.~\eqref{opt: SOS Haus Unions}.
	\end{itemize}
\end{thm}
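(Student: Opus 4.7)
The plan is to reduce both bullets to Thm.~\ref{thm: uniform convegence mplies H convergence} by verifying its two hypotheses for the sequences produced by~\eqref{opt: SOS Haus intersections} and~\eqref{opt: SOS Haus Unions}. Concretely, I would show that every feasible triple $(J_d, P_d, \gamma)$ satisfies $V(x) \le J_d(x)$ for all $x \in \Lambda$, where $V$ is the associated $\max_i g_i$ or $\min_i g_i$ from Lem.~\ref{lem: writing semialg as a single sublevel}/Lem.~\ref{lem: writing union semialg as a single sublevel}, and that the optimal $\gamma_d^*$ tends to $0$ as $d \to \infty$. Since the ``width'' constraint involving $\gamma - (J_d - P_d)$ forces $J_d(x) - V(x) \le J_d(x) - P_d(x) \le \gamma_d^*$ pointwise on $\Lambda$, this yields $\|V - J_d^*\|_{L^\infty(\Lambda,\R)} \le \gamma_d^* \to 0$, whereupon Thm.~\ref{thm: uniform convegence mplies H convergence} at sublevel threshold $0$ delivers~\eqref{eqn: convergence of SOS to intersection}. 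The inclusion~\eqref{eq: J_d subset of X} is then immediate: $V \le J_d^*$ together with $J_d^*(x) < 0$ implies $V(x) < 0$, hence $x \in X$.

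To establish $V \le J_d$ on $\Lambda$ I would unpack each SOS constraint using nonnegativity of SOS polynomials on $\R^n$ and nonnegativity of $r^2 - \|x\|_2^2$ on $\Lambda$. In the intersection case the constraint $J_d - g_i - s_{i,3}(r^2 - \|x\|_2^2) \in \sum_{SOS}^d$ directly gives $J_d(x) \ge g_i(x)$ on $\Lambda$ for each $i$, hence $J_d \ge \max_i g_i = V$. In the union case the cross-term multipliers $s_{i,j,3}(g_j - g_i)$ localize the inequality to the cell $\{x \in \Lambda : g_j(x) \ge g_i(x) \text{ for all } j\}$, which is precisely where $g_i = V$; since these cells cover $\Lambda$ we again conclude $J_d \ge V$ everywhere.

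The substantive step is the asymptotic feasibility argument $\gamma_d^* \to 0$. Given $\epsilon > 0$, Stone-Weierstrass furnishes a polynomial $p$ with $\sup_{x \in \Lambda} |V(x) - p(x)| < \epsilon/4$ (note $V$ is continuous as a finite max/min of polynomials). Setting $\hat{J} := p + \epsilon/2$ and $\hat{P} := p - \epsilon/2$ produces polynomial sandwiches with strict slack $\hat{J} - V \ge \epsilon/4$, $V - \hat{P} \ge \epsilon/4$, and $\epsilon - (\hat{J} - \hat{P}) = \epsilon/2$ on $\Lambda$. Each required polynomial inequality (e.g., $\hat{J} - g_i > 0$ on $\Lambda$ in the intersection case, or $\hat{J} - g_i > 0$ on the min-cell in the union case) is then strictly positive on a compact Archimedean semialgebraic set, so Putinar's Positivstellensatz produces SOS multipliers of some finite degree certifying it. Thus $(\hat{J}, \hat{P}, \epsilon)$ is feasible for~\eqref{opt: SOS Haus intersections} or~\eqref{opt: SOS Haus Unions} at all sufficiently large $d$, so $\gamma_d^* \le \epsilon$; arbitrariness of $\epsilon$ closes the argument.

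The main obstacle I anticipate is verifying that Putinar's Archimedean hypothesis applies to the enlarged quadratic modules generated by $r^2 - \|x\|_2^2$ together with the cross-differences $g_j - g_i$ used in the localization, especially in the union case; fortunately the ball constraint $r^2 - \|x\|_2^2$ already makes each module Archimedean, so this reduces to careful bookkeeping of the constraints in~\eqref{opt: SOS Haus intersections} and~\eqref{opt: SOS Haus Unions} rather than a new technical ingredient.
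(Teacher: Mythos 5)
Your proposal is correct and follows essentially the same route as the paper: feasibility of the SOS constraints gives $J_d^*\ge V$ on $\Lambda$ (hence the inclusion), a Weierstrass approximant shifted up and down yields a strictly feasible polynomial sandwich certified at finite degree by Putinar's Positivstellensatz, optimality forces $\gamma_d^*\le\eps$ and hence $\|J_d^*-V\|_{L^\infty(\Lambda,\R)}\to 0$, and Theorem~\ref{thm: uniform convegence mplies H convergence} concludes. The only slip is arithmetic and harmless: with $\hat J=p+\eps/2$ and $\hat P=p-\eps/2$ one has $\eps-(\hat J-\hat P)=0$ rather than $\eps/2$, which is still trivially SOS-certifiable (or fixed by taking the candidate $\gamma$ slightly larger than $\eps$).
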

%
\begin{proof}
	\textbf{Convergence to semialgebraic sets:}	We first prove the first case where $X$ is a semialgebraic set, given by Eq.~\eqref{eq: semialg sets}, and $\{J_d^*\}_{d \in \N} \subset \R[x]$ solve the family of $d$-degree SOS optimization problems given in Eq.~\eqref{opt: SOS Haus intersections}.
	
Now, it follows that if we show that
	\begin{align} \label{pfeq: uniform bound}
		J_d^*(x) \ge V(x) \text{ for all } x \in \Lambda,\\ \label{pfeq: uniform convergence}
		\lim_{d \to \infty}||J_d^*-V||_{L^\infty(\Lambda,\R)}=0,
	\end{align}
	where $V(x):=\max_{1 \le i \le m} g_i(x)$, then Eqs~\eqref{eqn: convergence of SOS to intersection} and~\eqref{eq: J_d subset of X} hold and the proof is complete. This is because Lem.~\ref{lem: writing semialg as a single sublevel} shows that $X=\{x \in \Lambda: V(x)<0\}$ and therefore if Eq.~\eqref{pfeq: uniform bound} holds then clearly Eq.~\eqref{eq: J_d subset of X} must hold. Moreover, if Eqs~\eqref{pfeq: uniform bound} and~\eqref{pfeq: uniform convergence} hold then by Theorem~\ref{thm: uniform convegence mplies H convergence} it must follow that $\lim_{d \to \infty}	D_H (\{x \in \Lambda: J_d^*(x)<0\} , \{x \in \Lambda: V(x)<0\} )$. It is then clear Eq.~\eqref{eqn: convergence of SOS to intersection} holds.
	
	We first show Eq.~\eqref{pfeq: uniform bound}. Because $J_d^*$ solves Opt.~\eqref{opt: SOS Haus intersections} it follows that it must be feasible to Opt.~\eqref{opt: SOS Haus intersections} and hence satisfies each constraint. In particular there must exist SOS variables $s_{2,i} \in \sum_{SOS}^d$ such that 
	\begin{align} \label{pfeq: constraint}
		J_d^*(x) - g_i(x) -s_{2,i}(x)(r^2 - ||x||_2^2) \in  \sum_{SOS}^d \text{ for } 1 \le i \le m.
	\end{align}
	Hence, it follows by Eq.~\eqref{pfeq: constraint} and the positivity of SOS polynomials that $J_d^*(x) \ge g_i(x)$ for all $x \in \Lambda$ and $1\le i \le m$. Therefore $J_d^*(x) \ge \max_{1 \le i \le m} g_i(x)=V(x)$ and thus Eq.~\eqref{pfeq: uniform bound} holds. 
	
	We next show that Eq.~\eqref{pfeq: uniform convergence} holds. Let $\eps>0$, by Cor.~\ref{cor: existence of poly close to max of polys}, found in Appendix~\ref{subsec: appendix poly approx}, there exists $H_1,H_2 \in \R[x]$ such that, 
	\begin{align} \label{pfeq:close to V}
		& \sup_{ x \in \Lambda} |V(x) - H_j(x) | < \frac{\eps}{4} \text{ for } j \in \{1,2\}, \\ \nonumber
		& H_1(x) > g_i(x) \text{ for all } x \in B_r(0) \text{ and } 1 \le i \le m,\\ \nonumber
		& H_2(x) < g_i(x) \text{ for all } x \in B_r(0) \cap \bar{\mcl Y_i }\text{ and } 1 \le i \le m,
	\end{align}
	where  $\bar{\mcl Y_i } :=\{y \in \Lambda: g_i(y) \ge g_j(y) \text{ for } 1 \le j \le m  \}$. 
	
	Since $H_1(x)-g_i(x) >0$ for all $x \in B_r(0)$ and $1 \le i \le m$ and also since $B_r(0)$ is compact semialgebraic set it follows by Thm.~\ref{thm: Psatz} that there exists SOS polynomials $\{s_{2,i}\}_{i \in \{1,..,m\} } \subset \sum_{SOS}$ and $s_{0,1	} \in \sum_{SOS}$ such that,
	\begin{align} \label{pfeq:s1}
		H_1(x)-g_i(x)-s_{2,i}(x)(r^2 - ||x||_2^2)=s_{0,1	}(x).
	\end{align}
	Moreover, by a similar argument, since $g_i(x)- H_2(x) >0$ for all $x \in  B_r(0) \cap \bar{\mcl Y_i } $ and $1 \le i \le m$ and also since $B_r(0)\cap \bar{\mcl Y_i }$ is compact semialgebraic set it follows by Thm.~\ref{thm: Psatz} that there exists SOS polynomials $\{s_{1,i,j}\}_{(i,j) \in \{1,..,m\} \times \{1,..,m\} } \subset \sum_{SOS}$, $\{s_{1,i}\}_{i \in \{1,..,m\} } \subset \sum_{SOS}$  and $s_{0,2	} \in \sum_{SOS}$ such that,
	\begin{align} \label{pfeq:s2}
		g_i(x)-H_2(x) -s_{1,i}(x) & (r^2 - ||x||_2^2) \\ \nonumber
		& -\sum_{j=1}^m s_{1,i,j}(x)(g_i(x)-g_j(x))     =s_{0,2}(x).
	\end{align}
	Now clearly, $H_2(x)< \max_{1 \le i \le m} g_i(x) =V(x)< H_1(x)$ for all $x \in  B_r(0)$ implying
	\begin{align*}
		& \gamma_H -(H_2(x) - H_1(x)) \ge (H_2(x) - H_1(x)) \\
		& = (H_2(x)-V(x))+ (V(x) - H_1(x))>0 \text{ for all } x \in B_r(0),
	\end{align*}
	where $\gamma_H :=2 \sup_{x \in B_r(0)} \{H_2(x) - H_1(x)\} $. Therefore by Thm.~\ref{thm: Psatz} there exists SOS polynomials $s_{3}   \in \sum_{SOS}$ and and $s_{0,3	} \in \sum_{SOS}$ such that,
	\begin{align} \label{pfeq:s3}
		\gamma_H -(H_2(x) - H_1(x)) -s_3(x)(r^2 - ||x||_2^2) =s_{0,3}(x).
	\end{align}
	Furthermore, by Eq.~\eqref{pfeq:close to V} it follows that
	\begin{align} \label{pfeeq:eps}
		\gamma_H\le  2\sup_{x \in B_r(0)}|H_2(x)-V(x)| +   2\sup_{x \in B_r(0)}|V(x)-H_1(x)| < \eps.
	\end{align}
	
	Now, let $d_0:=\max \bigg\{ deg(H_1),deg(H_2), \max_{i,j \in \{1,...,m\} }\{s_{1,i,j} \},$ $\max_{i \in \{1,...,m\}} \{s_{2,i}\}, s_3, \max_{i \in \{1,2,3\}} \{s_{0,i}\}    \bigg\}$. It then follows by Eqs~\eqref{pfeq:s1}, \eqref{pfeq:s2} and~\eqref{pfeq:s3} that $(H_2,H_1,\gamma_H)$ is a feasible solution to Opt.~\eqref{opt: SOS Haus intersections} for degree $d \ge d_0$. Hence, the value of the objective resulting from the optimal solutions, $(J_{d}^*,P_{d}^*, \gamma_{d}^*)$ for $d \ge d_0$, will be less than or equal to the value of the objective function resulting from the feasible solution $(H_2,H_1,\gamma_H)$. Therefore by Eq.~\eqref{pfeeq:eps} we have that,
	\begin{align} \label{pdeq:gam}
		\gamma^*_{d} \le \gamma_H<\eps \text{ for all } d \ge d_0.
	\end{align}
	
	Now, since $(J_{d}^*,P_{d}^*, \gamma_{d}^*)$ is the optimal solution to Opt.~\eqref{opt: SOS Haus intersections} it satisfies all of the constraints of Opt.~\eqref{opt: SOS Haus intersections}. Since SOS polynomials are non-negative it then follows
	\begin{align*}
		J^*_{d}(x)&  \ge g_i(x) \text{ for all } x \in B_r(0) \text{ and } 1 \le i \le m, \\
		P^*_{d}(x) & \le g_i(x) \text{ for all } x \in B_r(0) \cap \bar{\mcl Y_i }\text{ and } 1 \le i \le m,\\
		J^*_{d}(x)- P^*_{d}(x) &  \le \gamma_{d}^* \text{ for all } x \in B_r(0),
	\end{align*}
	implying $P^*_{d}(x) \le V(x) \le J^*_{d}(x)$ for all $B_r(0)$ and hence 
	\begin{align} \label{pfeq: less than gam}
		|J^*_{d}(x) - V(x)| & = J^*_{d}(x) - V(x) \le J^*_{d}(x) - P^*_{d}(x)\\ \nonumber
		& \le \gamma_{d}^* \text{ for all } x \in B_r(0).
	\end{align}
	Therefore, by Eqs~\eqref{pdeq:gam} and~\eqref{pfeq: less than gam} it follows that
	\begin{align} \label{pfeq:1}
		\sup_{x \in B_r(0)}	|J^*_{d}(x) - V(x)|\le \gamma_{d}^*< \eps \text{ for } d \ge d_0.
	\end{align}
	Now, $\eps>0$ was arbitrarily chosen and thus Eq.~\eqref{pfeq:1} shows Eq.~\eqref{pfeq: uniform convergence}, completing the proof.
	
		\textbf{Convergence to unions of semialgebraic sets:} Consider the case when $X$ is given by Eq.~\eqref{set: union strict} and $\{J_d^*\}_{d \in \N} \subset \R[x]$ is given in Eq.~\eqref{opt: SOS Haus Unions}. Then  Eqs~\eqref{eqn: convergence of SOS to intersection} and~\eqref{eq: J_d subset of X} hold by a similar argument to the proof of convergence to to semialgebraic sets, where instead of showing $\lim_{d \to \infty}||J_d^*-\max_{1 \le i \le m} g_i||_{L^\infty(\Lambda,\R)}=0$ (as in Eq.~\eqref{pfeq: uniform convergence}) we show $\lim_{d \to \infty}||J_d^*-\min_{1 \le i \le m} g_i||_{L^\infty(\Lambda,\R)}=0$.
\end{proof}

Note, the solutions, $\{J_d\}_{d \in \N}$, to Opts~\eqref{opt: SOS Haus intersections} and~\eqref{opt: SOS Haus Unions} provide inner sublevel set approximations as shown in Eq.~\eqref{eq: J_d subset of X}. However, it is still possible to construct an outer sublevel set approximation using $P_d^*$ since in both cases $P_d^*(x) \le V(x)$ and $\lim_{d \to \infty} ||P_d- V||_{L^\infty(\Lambda,\R)}=0$. Unfortunately, as shown in Counterexample~\ref{cex: lower bound does not tend in Haus} (found in the appendix), it is not necessarily true that $\{x \in \Lambda: P^*_d(x)<\gamma \} \to \{x \in \Lambda: V(x)<\gamma \}$ in the Hausdorff metric. Fortunately, it is fairly straight forward to show $\{x \in \Lambda: P^*_d(x) \le \gamma \} \to \{x \in \Lambda: V(x) \le \gamma \}$ in the volume metric using Prop.~\ref{prop: close in L1 implies close in V norm}.


We next use Theorem~\ref{thm: close in L1 implies close in V norm strict sublevel set} to show that the SOS programs we proposed in Section~\ref{sec: SOS programs for set approx}, that approximate in the $L^1$ norm from above, yield polynomial sublevel sets that converge in the volume metric to semialgebraic sets or Pontryagin differences.
\begin{thm} \label{thm: SOS converge in V to strict sublevel sets}
	It holds that,
	\begin{align} \label{eqn: convergence of L1 SOS to intersection}
	& \lim_{d \to \infty}	D_V \bigg(\{x \in \Lambda: J_d^*(x) < 0\} , X \bigg) =0, \\ \label{eq: J_d subset of X 2}
	&\qquad \qquad 	\{x \in \Lambda: J_d^*(x)<0\} \subseteq X,
\end{align}
	where $\Lambda:=\{x \in \R^n: ||x||_2 \le r\}$ and the set $X$ and the sequence of solutions $\{J_d^*\}_{d \in \N} \subset \R[x]$ satisfy either of the following:
	\begin{itemize}
		\item $X$ is a semialgebraic set, given by Eq.~\eqref{eq: semialg sets}, and $\{J_d^*\}_{d \in \N} \subset \R[x]$ is given in Eq.~\eqref{opt: SOS vol intersection}.
		\item $X$ is a Pontryagin difference, given by Eq.~\eqref{set: pont int and int}, and $\{J_d^*\}_{d \in \N} \subset \R[x]$ is given in Eq.~\eqref{opt: SOS vol Ponty diff}.
	\end{itemize}
\end{thm}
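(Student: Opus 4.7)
The plan is to reduce both cases to Theorem~\ref{thm: close in L1 implies close in V norm strict sublevel set} applied with $\gamma = 0$, where $V$ is the sublevel-set representative of $X$ supplied by Lemma~\ref{lem: writing semialg as a single sublevel} in the semialgebraic case, and by the combination of Lemmas~\ref{lem: writing semialg as a single sublevel} and~\ref{lem: Pontryagin difference of sublevel sets} in the Pontryagin case. To invoke that theorem it suffices to verify: (a) $V(x) \le J_d^*(x)$ for every $x \in \Lambda$ and $d \in \N$, and (b) $\|V - J_d^*\|_{L^1(\Lambda,\R)} \to 0$. Note that (a) simultaneously proves Eq.~\eqref{eq: J_d subset of X 2}, since $J_d^*(x) < 0$ then forces $V(x) < 0$, i.e.\ $x \in X$.

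First I would dispatch (a) directly from the SOS constraints and the positivity of SOS polynomials on $\Lambda = \{x : \|x\|_2 \le r\}$. For the semialgebraic case, the certificates in Opt.~\eqref{opt: SOS vol intersection} give $J_d^*(x) \ge g_i(x)$ on $\Lambda$ for every $i$, hence $J_d^*(x) \ge \max_i g_i(x) = V(x)$. For the Pontryagin case, the certificates in Opt.~\eqref{opt: SOS vol Ponty diff} give $J_d^*(x) \ge g_{1,i}(x + w)$ on the lifted compact semialgebraic set $\{(x,w) : \|(x,w)\|_2 \le r, \; g_{2,j}(w) \le 0 \text{ for all } j\}$; taking the supremum over admissible $w$ and the maximum over $i$ recovers $J_d^*(x) \ge V(x)$ on $\Lambda$.

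For (b), since $V \le J_d^*$ pointwise, $\|J_d^* - V\|_{L^1(\Lambda,\R)} = \int_\Lambda J_d^*(x)\, dx - \int_\Lambda V(x)\, dx$, so it suffices to show $\int_\Lambda J_d^*\, dx \to \int_\Lambda V\, dx$. Given $\eps > 0$, I would construct a polynomial $H$ that is feasible for the relevant SOS program at some degree $d(\eps)$ with $\int_\Lambda H\, dx \le \int_\Lambda V\, dx + \eps$; this bounds the optimal objective, and $\eps \to 0$ gives the claim. The construction is: (i) argue $V$ is continuous on $\Lambda$ (immediate for $V = \max_i g_i$; for the Pontryagin case via Berge's maximum theorem, since $(x,w) \mapsto \max_i g_{1,i}(x+w)$ is jointly continuous and the feasible set for $w$ is compact); (ii) pick $\tilde H \in \R[x]$ with $\sup_\Lambda |\tilde H - V| < \delta$ by Stone--Weierstrass; (iii) set $H = \tilde H + 2\delta$, so $H > V$ strictly on $\Lambda$.

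The remaining step is producing the SOS certificates for $H$. Since $\Lambda$ is compact and Archimedean, Putinar's Positivstellensatz applied to $H(x) - g_i(x) > 0$ on $\Lambda$ supplies SOS multipliers of some degree $d_0(\eps)$ certifying feasibility of $H$ in Opt.~\eqref{opt: SOS vol intersection}; in the Pontryagin case the same argument is applied to $H(x) - g_{1,i}(x+w) > 0$ on the lifted Archimedean set to obtain the required multipliers $s_{i,1}(x,w), s_{i,j,2}(x,w)$ for Opt.~\eqref{opt: SOS vol Ponty diff}. Choosing $\delta$ with $2\delta \mu(\Lambda) < \eps$ yields $\int_\Lambda J_d^*\, dx \le \int_\Lambda H\, dx \le \int_\Lambda V\, dx + \eps$ for $d \ge d_0(\eps)$, completing the argument. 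I expect the main obstacle to be the Pontryagin case, specifically justifying continuity of $V$ under the supremum and lifting the $x$-only polynomial $H$ into a Putinar certificate on the $(x,w)$-semialgebraic set while matching the exact structure of the SOS multipliers prescribed in Opt.~\eqref{opt: SOS vol Ponty diff}.
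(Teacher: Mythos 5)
Your proposal is correct and follows the same skeleton as the paper's proof: reduce to Theorem~\ref{thm: close in L1 implies close in V norm strict sublevel set} with $\gamma=0$ via the sublevel-set representations of Lemmas~\ref{lem: writing semialg as a single sublevel} and~\ref{lem: Pontryagin difference of sublevel sets}, extract $V\le J_d^*$ (and hence Eq.~\eqref{eq: J_d subset of X 2}) from the nonnegativity of the SOS certificates, and establish $L^1$ convergence by exhibiting a near-optimal feasible polynomial. The one place you diverge is the semialgebraic sub-case: the paper does not build a fresh Weierstrass--Putinar certificate there, but instead observes that the solutions $G_d^*$ of the Hausdorff program in Eq.~\eqref{opt: SOS Haus intersections} are already feasible for Eq.~\eqref{opt: SOS vol intersection}, so $\int_\Lambda J_d^*\,dx \le \int_\Lambda G_d^*\,dx$ and the $L^\infty$ convergence $\lVert G_d^*-V\rVert_{L^\infty}\to 0$ already proved in Theorem~\ref{thm: SOS H convergence} immediately gives $\lVert J_d^*-V\rVert_{L^1}\to 0$. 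Your direct construction (Stone--Weierstrass approximant shifted up by $2\delta$, then Putinar on the Archimedean ball and on the lifted $(x,w)$-set) is exactly what the paper does for the Pontryagin case via Corollary~\ref{cor: function approx ponty diff}, so your argument is in fact more uniform across the two cases, at the cost of redoing work the paper recycles; the paper's shortcut buys brevity but only works where a companion $L^\infty$ program has already been analyzed. Your identified ``main obstacles'' are handled in the paper by Lemma~\ref{lem: max minimum of Lipschitz functions is Lipschitz} (continuity of the sup/max composite, in place of Berge) and by applying Putinar on the lifted semialgebraic set with multipliers in $(x,w)$, matching the sign convention $+\sum_j s_{i,j,2}(x,w)g_{2,j}(w)$ in Eq.~\eqref{opt: SOS vol Ponty diff}; neither presents a genuine gap.
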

\begin{proof}
\textbf{Convergence to semialgebraic sets:}	We first show Eqs~\eqref{eqn: convergence of L1 SOS to intersection} and~\eqref{eq: J_d subset of X 2} hols in the case $X$ is a semialgebraic set, given by Eq.~\eqref{eq: semialg sets}, and $\{J_d^*\}_{d \in \N} \subset \R[x]$ is given in Eq.~\eqref{opt: SOS vol intersection}.

 Now, it follows that if we show that
	\begin{align} \label{pfeq: uniform bound 2}
		J_d^*(x) \ge V(x) \text{ for all } x \in \Lambda,\\ \label{pfeq: L1 convergence}
		\lim_{d \to \infty}||J_d^*-V||_{L^1(\Lambda,\R)}=0,
	\end{align}
	where $V(x):=\max_{1 \le i \le m} g_i(x)$, then Eqs~\eqref{eqn: convergence of L1 SOS to intersection} and~\eqref{eq: J_d subset of X 2} hold and the proof is complete. This is because by Lem.~\ref{lem: writing semialg as a single sublevel} we have that $X=\{x \in \Lambda: V(x)<0\}$. Then if Eqs~\eqref{pfeq: uniform bound 2} and~\eqref{pfeq: L1 convergence} hold then clearly Eq.~\eqref{eq: J_d subset of X 2} is satisfied and, by Thm.~\ref{thm: close in L1 implies close in V norm strict sublevel set}, it must follow that $\lim_{d \to \infty}	D_V(\{x \in \Lambda: J_d^*(x)<0\} , \{x \in \Lambda: V(x)<0\} )$. It is then clear Eq.~\eqref{eqn: convergence of L1 SOS to intersection} holds.
	
	We first show Eq.~\eqref{pfeq: uniform bound 2}. Because $J_d^*$ solves Opt.~\eqref{opt: SOS vol intersection} it follows that it must be feasible to Opt.~\eqref{opt: SOS vol intersection} and hence satisfies each constraint. In particular there must exist SOS variables $s_{i} \in \sum_{SOS}^d$ such that 
	\begin{align} \label{pfeq: constraint 2}
		J_d^*(x) - g_i(x) -s_{i}(x)(r^2 - ||x||_2^2) \in  \sum_{SOS}^d \text{ for } 1 \le i \le m.
	\end{align}
	Hence, it follows by Eq.~\eqref{pfeq: constraint 2} and the positivity of SOS polynomials that $J_d^*(x) \ge g_i(x)$ for all $x \in \Lambda$ and $1\le i \le m$. Therefore $J_d^*(x) \ge \max_{1 \le i \le m} g_i(x)=V(x)$ and thus Eq.~\eqref{pfeq: uniform bound 2} holds. 
	
	We next show Eq.~\eqref{pfeq: L1 convergence}. Consider the sequence of functions, $\{G_d^*\}_{d \in \N} \subset \R[x]$, that solve the family of $d$-degree SOS optimization problems given in Eq.~\eqref{opt: SOS Haus intersections}. This sequence of functions is also feasible to Opt.~\eqref{opt: SOS vol intersection}. Therefore the value of the objective function of Opt.~\eqref{opt: SOS vol intersection} resulting from $G_d^*$ will be greater than the optimal solution $J_d^*$. That is
	\begin{align*}
		\int_\Lambda J_d^*(x) dx \le  \int_\Lambda G_d^*(x) dx.
	\end{align*}
	Moreover, by Eq.~\eqref{pfeq: uniform convergence}, in the proof of Thm.~\ref{thm: SOS H convergence}, we have that 
	\begin{align*}
		\lim_{d \to \infty}||G_d^*-V||_{L^\infty(B_r(0),\R)}=0.
	\end{align*}
	Hence,
	\begin{align*}
		&	\lim_{d \to \infty} \int_{\Lambda} |V(x) - J_d^*(x)| dx = \lim_{d \to \infty} \int_{\Lambda} V(x) - J_d^*(x) dx\\
		& \le \lim_{d \to \infty} \int_{\Lambda} V(x) - G_d^*(x) dx \le \lim_{d \to \infty} \mu(\Lambda) ||V-G_d||_{L^1(B_r(0),\R)}=0.
	\end{align*}
	Therefore, Eq.~\eqref{pfeq: L1 convergence} is shown completing the proof.

					\textbf{Convergence to Pontryagin differences:} 	Consider the case where $X$ is given by Eq.~\eqref{set: pont int and int}, and $\{J_d^*\}_{d \in \N} \subset \R[x]$ is given in Eq.~\eqref{opt: SOS vol Ponty diff}. Note, in Eq.~\eqref{fun: V for pont} that it was shown that $X=\{x \in \Lambda: V(x) < 0 \}$, where \[V(x):=\sup_{w  \in  \{z \in \Lambda: \max_{1 \le i \le m_2} g_{2,i}(z) \le 0 \} } \max_{1 \le i \le m_1} g_{1,i}(x-w).\]
					
					The rest of the proof follows by a similar argument to the convergence to semialgebraic sets proof, where we use Cor.~\ref{cor: function approx ponty diff} {(found in the appendix)} to approximate $V$ uniformly from above by a polynomial. We then show that this polynomial approximation is feasible to Opt.~\eqref{opt: SOS vol Ponty diff} for sufficiently large $d \in \N$ using Theorem~\ref{thm: Psatz} {(found in the appendix)}. Because this feasible solution to Opt.~\eqref{opt: SOS vol Ponty diff} is arbitrarily close to $V$, it can then be shown that the true solution of Opt.~\eqref{opt: SOS vol Ponty diff} approximates $V$ from above with arbitrary precision with respect to the $L^1$ norm. Hence, Eq.~\eqref{eqn: convergence of L1 SOS to intersection} follows by Thm.~\ref{thm: close in L1 implies close in V norm strict sublevel set}. It is also clear that Eq.~\eqref{eq: J_d subset of X 2} since $J_d^*(x) \ge V(x)$.
\end{proof}

Thm.~\ref{thm: SOS converge in V to strict sublevel sets} shows that the SOS programs we proposed in Section~\ref{sec: SOS programs for set approx}, that approximate in the $L^1$ norm from \textbf{above}, converge to various sets defined by \textbf{strict} sublevel sets. We next show that the SOS programs we proposed in Section~\ref{sec: SOS programs for set approx}, that approximate in the $L^1$ norm from \textbf{below}, yield polynomial sublevel sets that converge in the volume metric to unions of semialgebraic sets or Minkowski sums or discrete points defined by  \textbf{non-strict} sublevel sets.
\begin{thm}
It holds that,
\begin{align} \label{eqn: convergence of L1 SOS non-strict}
	& \lim_{d \to \infty}	D_V \bigg(\{x \in \Lambda: J_d^*(x)  \le 0\} , X \bigg) =0,\\ \label{eq: X subset of J_d}
	&\qquad \qquad 	 X \subseteq \{x \in \Lambda: J_d^*(x) \le 0\},
\end{align}
where $\Lambda:=\{x \in \R^n: ||x||_2 \le r\}$ and the set $X$ and the sequence of solutions $\{J_d^*\}_{d \in \N} \subset \R[x]$ satisfy either of the following:
\begin{itemize}
	\item $X$ is a union of semialgebraic sets, given by Eq.~\eqref{set: union non-strict}, and $\{J_d^*\}_{d \in \N} \subset \R[x]$ is given in Eq.~\eqref{opt: SOS vol union}.
	\item $X$ is a Minkowski sum, given by Eq.~\eqref{set: mink union and int}, and $\{J_d^*\}_{d \in \N} \subset \R[x]$ is given in Eq.~\eqref{opt: SOS vol mink sum}.
	\item $X$ is a set of discrete points, given by $X=\{x_i\}_{i=1}^N\subset \Lambda$, and $\{J_d^*\}_{d \in \N} \subset \R[x]$ is given in Eq.~\eqref{opt: SOS discrete points}.
\end{itemize}
\end{thm}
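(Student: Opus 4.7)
The plan is to handle all three cases through the same three-step framework: identify the representation $X = \{x \in \Lambda : V(x) \le 0\}$ supplied by the relevant lemma, extract from the SOS feasibility of $J_d^*$ the pointwise inequality $J_d^*(x) \le V(x)$ on $\Lambda$, prove $L^1$ convergence $\|V - J_d^*\|_{L^1(\Lambda,\R)} \to 0$, and finally convert this into the non-strict volume convergence via the negation trick already employed in the proof of Theorem~\ref{thm: close in L1 implies close in V norm strict sublevel set}. The inclusion in Eq.~\eqref{eq: X subset of J_d} is then immediate, since $V(x) \le 0$ forces $J_d^*(x) \le V(x) \le 0$.

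The pointwise bound $J_d^* \le V$ falls out of the SOS constraints in each case. For unions of semialgebraic sets, the Putinar-type constraint in Opt.~\eqref{opt: SOS vol union} yields $J_d^*(x) \le g_i(x)$ on $\Lambda$ for every $i$, hence $J_d^*(x) \le \min_i g_i(x) = V(x)$. For Minkowski sums, the analogous two-variable constraint in Opt.~\eqref{opt: SOS vol mink sum} gives $J_d^*(x) \le g_{1,i}(x-w)$ for every admissible $w$, hence $J_d^*(x) \le V(x)$ with $V$ as in Eq.~\eqref{fun: V for mink}. For discrete points, Opt.~\eqref{opt: SOS discrete points} enforces $J_d^*(x_i) < 0 = V(x_i)$ at the data points and $J_d^*(x) \le 1 = V(x)$ elsewhere in $\Lambda$.

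The main work is the $L^1$ convergence, which I would establish by the ``feasible near-optimum'' argument already used in the proof of Theorem~\ref{thm: SOS H convergence}: for every $\eps > 0$ exhibit a polynomial $H$ feasible to the relevant SOS program for all sufficiently large $d$ with $\int_\Lambda (V - H)\,dx < \eps$; the pointwise bound $J_d^* \le V$ together with the optimality of $J_d^*$ then forces $\int_\Lambda (V - J_d^*)\,dx \le \int_\Lambda (V - H)\,dx < \eps$. For unions and Minkowski sums, $V$ is continuous on the compact set $\Lambda$, so $H$ can be built by Weierstrass approximation combined with a $\min$-analogue of the polynomial-construction argument in the proof of Theorem~\ref{thm: SOS H convergence}, and its feasibility is then certified by Putinar's Positivstellensatz (in the Minkowski case, applied on the compact semialgebraic lifting in $(x,w)$).

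The genuinely delicate case is the discrete points, where $V = 1 - \mathds{1}_{\{x_i\}}$ is discontinuous and admits no uniform polynomial approximation; this is the main technical obstacle. However $V \equiv 1$ almost everywhere on $\Lambda$, so $\int_\Lambda V\,dx = \mu(\Lambda)$ and the needed $L^1$ convergence reduces to $\int_\Lambda J_d^*\,dx \to \mu(\Lambda)$. I would construct $H$ by fixing small parameters $\delta,\eta > 0$, building a continuous $\psi_{\delta,\eta}$ that equals $1 - \eta$ outside pairwise disjoint balls $B_\delta(x_i)$ and dips to $-1 - \eta$ at each $x_i$, and approximating $\psi_{\delta,\eta}$ uniformly on $\Lambda$ to within $\eta/2$ by a polynomial $H$ via Weierstrass. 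Then $H(x_i) < 0$ and $H(x) < 1$ on $\Lambda$, so Putinar certifies $1 - H(x) - s_0(x)(r^2 - ||x||_2^2) \in \sum_{SOS}^d$ for sufficiently large $d$, while $\int_\Lambda (1 - H)\,dx \le \eta\,\mu(\Lambda) + 2 N \mu(B_\delta)$ can be driven below $\eps$ by a suitable choice of $\delta$, $\eta$ and the polynomial degree. Once $\|V - J_d^*\|_{L^1} \to 0$ is in hand, setting $\tilde V = -V$ and $\tilde J_d = -J_d^*$ gives $\tilde V \le \tilde J_d$ with $\|\tilde V - \tilde J_d\|_{L^1} \to 0$; Theorem~\ref{thm: close in L1 implies close in V norm strict sublevel set} at $\gamma = 0$ then yields $D_V(\{\tilde J_d < 0\},\{\tilde V < 0\}) \to 0$, and since $\mu(\Lambda) < \infty$ the volume metric is invariant under complementation in $\Lambda$, producing $D_V(\{J_d^* \le 0\}, X) \to 0$ and thereby Eq.~\eqref{eqn: convergence of L1 SOS non-strict}.
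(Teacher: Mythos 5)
Your proposal is correct and follows essentially the same architecture as the paper's proof: extract the pointwise bound $J_d^*\le V$ from the SOS constraints, build a feasible near-optimal polynomial (by Weierstrass approximation for the continuous $V$'s and by a mollification of the indicator for the discrete-point case, exactly as in Prop.~\ref{prop: L1 poly approx of indicator}), certify its feasibility with Putinar's Positivstellensatz, conclude $\|V-J_d^*\|_{L^1}\to 0$ by optimality, and then pass to the volume metric. The only deviation is the final step: the paper invokes Prop.~\ref{prop: close in L1 implies close in V norm} directly (the lower-approximation, non-strict-sublevel-set statement), whereas you re-derive that statement from Theorem~\ref{thm: close in L1 implies close in V norm strict sublevel set} by negating $V$ and $J_d^*$ and using invariance of $D_V$ under complementation in $\Lambda$; since the paper's proof of Theorem~\ref{thm: close in L1 implies close in V norm strict sublevel set} is precisely the reverse of this reduction, the two routes are equivalent and yours is valid (and has the small virtue of relying only on the main-text theorem rather than the appendix proposition).
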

\begin{proof}
		\textbf{Convergence to unions of semialgebraic sets:}  Consider the case when $X$ is given by Eq.~\eqref{set: union non-strict} and $\{J_d^*\}_{d \in \N} \subset \R[x]$ is given in Eq.~\eqref{opt: SOS vol union}.	By a similar argument to the proof of Thm.~\ref{thm: SOS converge in V to strict sublevel sets} that showed Eqs~\eqref{pfeq: uniform bound 2} and~\eqref{pfeq: L1 convergence} hold, it is possible to use Prop.~\ref{prop: existence of poly close to min of polys} to show that
	\begin{align*}
		V(x) \ge J_d^*(x)  \text{ for all } x \in \Lambda,\\ 
		\lim_{d \to \infty}||J_d^*-V||_{L^1(\Lambda,\R)}=0,
	\end{align*}
	for $V(x):=\min_{1 \le i \le m} g_i(x)$. Then by Prop.~\ref{prop: close in L1 implies close in V norm} it follows that $\lim_{d \to \infty} D_V(\{x \in \Lambda: V(x) \le 0\}, \{x \in \Lambda: J_d(x) \le 0 \} )=0$. By a similar argument to Lem.~\ref{lem: writing union semialg as a single sublevel} it follows that $X=\{x \in \Lambda: V(x) \le 0\}$, where $X$ is given by Eq.~\eqref{set: union non-strict}.  Therefore it is clear Eq.~\eqref{eqn: convergence of L1 SOS non-strict} holds. Moreover, since $V(x) \ge J_d^*(x)$ it is clear that Eq.~\eqref{eq: X subset of J_d} holds.
	
	\textbf{Convergence to Minkowski sums:} Consider the case where $X$ is given by Eq.~\eqref{set: mink union and int}, and $\{J_d^*\}_{d \in \N} \subset \R[x]$ is given in Eq.~\eqref{opt: SOS vol mink sum}. Note in Eq.~\eqref{fun: V for mink} that it was shown that $X=\{x \in \Lambda: V(x) \le 0 \}$, where \[V(x):=\inf_{w  \in  \{z \in \Lambda: \max_{1 \le i \le m_2} g_{2,i}(z) \le 0 \} } \min_{1 \le i \le m_1} g_{1,i}(x-w).\]
	
	The rest of the proof follows by a similar argument to the convergence to unions of semialgebraic sets proof, where we use Lem.~\ref{lem: function approx mink sum} {(found in the appendix)} to approximate $V$ uniformly from below by a polynomial. We then show this polynomial approximation is feasible to Opt.~\eqref{opt: SOS vol mink sum} for sufficiently large $d \in \N$ using Theorem~\ref{thm: Psatz} {(found in the appendix)}. Because this feasible solution to Opt.~\eqref{opt: SOS vol mink sum} is arbitrarily close to $V$, it can then be shown that the true solution of Opt.~\eqref{opt: SOS vol mink sum} approximates $V$ from below with arbitrary precision with respect to the $L^1$ norm. Hence, Eq.~\eqref{eqn: convergence of L1 SOS non-strict} follows by Prop.~\ref{prop: close in L1 implies close in V norm}. Moreover, since $V(x) \ge J_d^*(x)$ it is clear that Eq.~\eqref{eq: X subset of J_d} holds.
	
		\textbf{Convergence to Discrete Points:} Consider the case where $X=\{x_i\}_{i=1}^N$, and $\{J_d^*\}_{d \in \N} \subset \R[x]$ is given in Eq.~\eqref{opt: SOS discrete points}. Note in Lem.~\ref{lem: sublevel set pf discrete points} it was shown that $X=\{x \in \Lambda: V(x) \le 0 \}$, where \[V(x):=1 - \mathds{1}_{\{x_i\}_{i=1}^N}(x).\]
		
		The rest of this proof again follows by a similar argument to proof of convergence to unions of semialgebraic sets or Minkowski sums. This time we use Prop.~\ref{prop: L1 poly approx of indicator} to approximate $V$ uniformly from below by a polynomial. We then show that this polynomial is feasible to Opt.~\eqref{opt: SOS discrete points} for sufficiently large enough degree using Theorem~\ref{thm: Psatz}. Because this feasible solution to Opt.~\eqref{opt: SOS discrete points} is arbitrarily close to $V$, it can then be shown that the true solution of Opt.~\eqref{opt: SOS discrete points} approximates $V$ from below with arbitrary precision with respect to the $L^1$ norm. Hence, Eq.~\eqref{eqn: convergence of L1 SOS non-strict} follows by Prop.~\ref{prop: close in L1 implies close in V norm}. Moreover, since $V(x) \ge J_d^*(x)$ it is clear that Eq.~\eqref{eq: X subset of J_d} holds.
\end{proof}
}{
\begin{thm} \label{thm: SOS H convergence}
	It holds that,
	\begin{align} \label{eqn: convergence of SOS to intersection}
		& \lim_{d \to \infty}	D_H \bigg(\{x \in \Lambda: J_d^*(x)<0\} , X \bigg) =0,\\ \label{eq: J_d subset of X}
		&\qquad \qquad 	\{x \in \Lambda: J_d^*(x)<0\} \subseteq X,
	\end{align}
	where $\Lambda:=\{x \in \R^n: ||x||_2 \le r\}$ and the set $X$ and the sequence of functions $\{J_d^*\}_{d \in \N} \subset \R[x]$ satisfy either of the following:
	\begin{itemize}
		\item  $X$ is a semialgebraic set, given by Eq.~\eqref{eq: semialg sets}, and $\{J_d^*\}_{d \in \N} \subset \R[x]$ solve the family of $d$-degree SOS optimization problems given in Eq.~\eqref{opt: SOS Haus intersections}.
		\item  $X$ is a union of semialgebraic sets given by Eq.~\eqref{set: union strict} and $\{J_d^*\}_{d \in \N} \subset \R[x]$ solve the family of $d$-degree SOS optimization problems given in Eq.~\eqref{opt: SOS Haus Unions}.
	\end{itemize}
\begin{align} \label{eqn: convergence of L1 SOS to intersection}
	& \lim_{d \to \infty}	D_V \bigg(\{x \in \Lambda: J_d^*(x) < 0\} , X \bigg) =0, \\ \label{eq: J_d subset of X 2}
	&\qquad \qquad 	\{x \in \Lambda: J_d^*(x)<0\} \subseteq X,
\end{align}
where $\Lambda:=\{x \in \R^n: ||x||_2 \le r\}$ and the set $X$ and the sequence of solutions $\{J_d^*\}_{d \in \N} \subset \R[x]$ satisfy either of the following:
\begin{itemize}
	\item $X$ is a semialgebraic set, given by Eq.~\eqref{eq: semialg sets}, and $\{J_d^*\}_{d \in \N} \subset \R[x]$ is given in Eq.~\eqref{opt: SOS vol intersection}.
	\item $X$ is a Pontryagin difference, given by Eq.~\eqref{set: pont int and int}, and $\{J_d^*\}_{d \in \N} \subset \R[x]$ is given in Eq.~\eqref{opt: SOS vol Ponty diff}.
\end{itemize}
\begin{align} \label{eqn: convergence of L1 SOS non-strict}
	& \lim_{d \to \infty}	D_V \bigg(\{x \in \Lambda: J_d^*(x)  \le 0\} , X \bigg) =0,\\ \label{eq: X subset of J_d}
	&\qquad \qquad 	 X \subseteq \{x \in \Lambda: J_d^*(x) \le 0\},
\end{align}
where $\Lambda:=\{x \in \R^n: ||x||_2 \le r\}$ and the set $X$ and the sequence of solutions $\{J_d^*\}_{d \in \N} \subset \R[x]$ satisfy either of the following:
\begin{itemize}
	\item $X$ is a union of semialgebraic sets, given by Eq.~\eqref{set: union non-strict}, and $\{J_d^*\}_{d \in \N} \subset \R[x]$ is given in Eq.~\eqref{opt: SOS vol union}.
	\item $X$ is a Minkowski sum, given by Eq.~\eqref{set: mink union and int}, and $\{J_d^*\}_{d \in \N} \subset \R[x]$ is given in Eq.~\eqref{opt: SOS vol mink sum}.
	\item $X$ is a set of discrete points, given by $X=\{x_i\}_{i=1}^N\subset \Lambda$, and $\{J_d^*\}_{d \in \N} \subset \R[x] $ is given in Eq.~\eqref{opt: SOS discrete points}.
\end{itemize}
\end{thm}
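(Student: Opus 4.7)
The plan is to reduce each of the three claims to a function-level approximation statement and then apply either Theorem~\ref{thm: uniform convegence mplies H convergence} or Theorem~\ref{thm: close in L1 implies close in V norm strict sublevel set} to convert function convergence into set convergence. In every case the inclusion statements~\eqref{eq: J_d subset of X}, \eqref{eq: J_d subset of X 2}, and~\eqref{eq: X subset of J_d} follow directly from SOS feasibility and the non-negativity of SOS polynomials; for instance, the constraint $J_d^*(x)-g_i(x)-s_{i,3}(x)(r^2-||x||_2^2) \in \sum_{SOS}$ in Opt.~\eqref{opt: SOS Haus intersections} forces $J_d^*(x) \geq g_i(x)$ on $\Lambda$, hence $J_d^*(x) \geq V(x)=\max_i g_i(x)$, giving $\{J_d^* < 0\} \subseteq \{V < 0\} = X$. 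The sign structure of the other five SOS programs yields the remaining inclusions identically.

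For the Hausdorff part I would first treat the semialgebraic case. Setting $V := \max_i g_i$, the uniform bound $V \leq J_d^*$ is just noted, so the remaining task is $\|J_d^* - V\|_{L^\infty(\Lambda,\R)} \to 0$. Given $\eps > 0$, I would construct polynomial envelopes $H_1, H_2$ with $\sup_\Lambda|H_j - V| < \eps/4$, $H_1(x) > g_i(x)$ on $\Lambda$, and $H_2(x) < g_i(x)$ on the active region $\mcl{Y}_i := \{g_i \geq g_j \text{ for all } j\} \cap \Lambda$; Weierstrass approximation of $V$ followed by a $\pm \eps/4$ perturbation supplies these. Putinar's Positivstellensatz applied on the compact semialgebraic sets $\Lambda$ and $\Lambda \cap \mcl{Y}_i$ then provides SOS certificates making the triple $(H_2, H_1, \gamma_H)$ with $\gamma_H < \eps$ feasible to Opt.~\eqref{opt: SOS Haus intersections} at some finite degree. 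Optimality forces $\gamma_d^* \leq \gamma_H$, and the sandwich $P_d^*(x) \leq V(x) \leq J_d^*(x)$ with $J_d^* - P_d^* \leq \gamma_d^*$ yields $|J_d^* - V| < \eps$ on $\Lambda$. The union case is symmetric, with the roles of $\max$ and $\min$ interchanged in the active-region constraint.

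For the strict-sublevel volume claims I would exploit that a Hausdorff-convergent sequence is automatically $L^1$-convergent on a bounded set. The sequence $G_d^*$ produced by the Hausdorff SOS program is feasible for the $L^1$-minimization Opt.~\eqref{opt: SOS vol intersection}, so $\int_\Lambda J_d^* \leq \int_\Lambda G_d^*$; combined with $V \leq J_d^* \leq G_d^*$ and uniform convergence of $G_d^*$, this gives $\|J_d^* - V\|_{L^1(\Lambda,\R)} \to 0$, and Theorem~\ref{thm: close in L1 implies close in V norm strict sublevel set} finishes. The Pontryagin case requires an analogous Positivstellensatz construction for $V(x) = \sup_w \max_i g_{1,i}(x+w)$ on the joint semialgebraic set in $(x,w)$, using $w$ as an auxiliary variable, to supply a polynomial upper bound converging to $V$ uniformly.

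For the non-strict lower-approximation cases the argument is dual: the SOS constraints yield $J_d^* \leq V$, hence $X \subseteq \{J_d^* \leq 0\}$, and a polynomial lower approximation of $V$ (respectively $\min_i g_i$ for unions, $\inf_w \min_i g_{1,i}(x-w)$ for Minkowski sums, and $1 - \mathds{1}_{\{x_i\}}$ for discrete points) is constructed and certified feasible via Putinar, after which optimality of $J_d^*$ forces $L^1$-convergence; volume convergence on non-strict sublevel sets then follows by a variant of Theorem~\ref{thm: close in L1 implies close in V norm strict sublevel set} applied to $-V$. I expect the discrete-points case to be the main obstacle: since $V = 1 - \mathds{1}_{\{x_i\}}$ is discontinuous, only $L^1$ polynomial approximation is available (via mollification and Weierstrass) and one must additionally ensure that the approximating polynomial is strictly negative at every $x_i$ while remaining close to $1$ elsewhere on $\Lambda$, which is precisely the interpolation-type constraint $J_d(x_i) < 0$ in Opt.~\eqref{opt: SOS discrete points} that distinguishes this case from the others.
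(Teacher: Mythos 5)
Your proposal follows essentially the same route as the paper: Weierstrass approximation plus a small shift to build strictly feasible polynomial envelopes, Putinar's Positivstellensatz to certify SOS feasibility at finite degree, optimality to force the computed solution at least as close as the constructed one, and then Theorem~\ref{thm: uniform convegence mplies H convergence} or Theorem~\ref{thm: close in L1 implies close in V norm strict sublevel set} (or its lower-bound, non-strict variant) to pass to set convergence, with the $L^1$ programs handled by comparing against the Hausdorff solutions $G_d^*$. One small caution: the pointwise claim $J_d^*\le G_d^*$ in your volume argument is neither justified nor needed --- the integral inequality $\int_\Lambda (J_d^*-V)\le\int_\Lambda (G_d^*-V)$, which you already derived from optimality, together with $V\le J_d^*$ is all that is required.
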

\begin{proof}
	Convergence is shown using the following steps:
\begin{enumerate}
	\item Use the Weierstrass theorem and a small correction term (or mollification in the case of the discontinuous function $V(x):=1-\mathds{1}_{\{x_i\}_{i=1}^N}(x)$) to show that there exists a polynomial that is arbitrary close to $V$ and also satisfies the inequality constraints to the associated SOS problem strictly. Since only standard arguments are required we have left the full technical details for this step to the extended Arxiv paper~\cite{jones2023sublevel}.
	\item Apply Putinar’s Positivstellesatz~\cite{putinar1993positive} to show that this polynomial is feasible to the associated SOS problem for some sufficiently large degree.
	\item Deduce that the true solution to the SOS problem must tend to $V$ as $d \to \infty$ since by optimality it must be closer than the feasible polynomial that we found in the previous steps of the proof.
	\item Apply Thm.~\ref{thm: uniform convegence mplies H convergence} or Thm.~\ref{thm: close in L1 implies close in V norm strict sublevel set} to show that the sublevel sets of $V$ and the optimal solution to the SOS program converge in some set metric as $d \to \infty$.
\end{enumerate}
\end{proof}}
\section{Numerical Examples}
\ifthenelse{\boolean{longver}}{
	In this section we use the SOS programs we proposed in Sec.~\ref{sec: SOS programs for set approx} to approximate various sets. We first approximate unions of semialgebraic sets in the Hausdorff and volume metric. We then approximate Minkowski sums and Pontryagin differences in the volume metric. We finish the section with three numerical examples that have practical motivations, the approximation of Region of Attractions (ROAs) and attractor sets of nonlinear systems via the outer bounding sets of discrete points, and the Minkowski sum of obstacles and vehicle shape sets to construct a C-space in which collision free path planning can be computed in. All SOS programs are solved using Yalmip~\cite{lofberg2004yalmip} with SDP solver Mosek~\cite{aps2019mosek}.
	    }{
All SOS programs are solved using Yalmip~\cite{lofberg2004yalmip} with SDP solver Mosek~\cite{aps2019mosek}. Further implementation details and more numerical examples can be found in~\cite{jones2023sublevel}.
	    }

%


\begin{ex}[Approximation of unions of semialgebraic sets] \label{ex: Approximation of unions of semialgebraic sets}
	Consider the union of semialgebraic sets
	\begin{align} \label{numerical union of semialg}
		& \cup_{i=1}^3 X_i= \cup_{i=1}^3 \{x \in \R^2: g_i(x)<0 \} \\ \nonumber
		\text{where } \\ \nonumber
	g_1(x) & = x_1^2 + x_2^2 -0.075 , \\ \nonumber
	g_2(x) & = (x_1-0.15)^2 + (x_2-0.15)^2 -0.025, \\ \nonumber
	g_3(x) & = (x_1+0.25)^2 + (x_2 + 0.25)^2  -0.001.
	\end{align}
	In Fig.~\ref{fig: union of semialg} we have plotted $\cup_{i=1}^3 X_i$ as the green region along with several approximations of the form $\{x\in \R^n: P(x)<0 \}$. For our Hausdorff approximation $P=P_d$ and $P_d$ is found by solving SOS Opt.~\eqref{opt: SOS Haus Unions}. For our volume approximation $P=J_d$ and $J_d$ is found by solving SOS Opt.~\eqref{opt: SOS vol union}. Fig.~\ref{fig: union of semialg} shows several outer approximations for $r=0.57$, $\Lambda=[-0.4,0.4]^2$ and $d=2,10$ $\&$ $18$. As expected from the convergence proofs given in Sec.~\ref{sec: Convergence of Our Proposed SOS Programs}, in both cases we see as the degree increases our set approximation improves. Interestingly the degree 18 Hausdorff approximation seems to give a better representation of the topology than the degree 18 volume approximation by showing some disconnection between $X_2$ and $X_3$. 
	
	\begin{figure} 
		\centering
		\subfloat[\scriptsize Hausdorff approximation]
		{
			\includegraphics[width=0.46 \linewidth,  trim = {0cm 2cm 3cm 0cm},, clip]{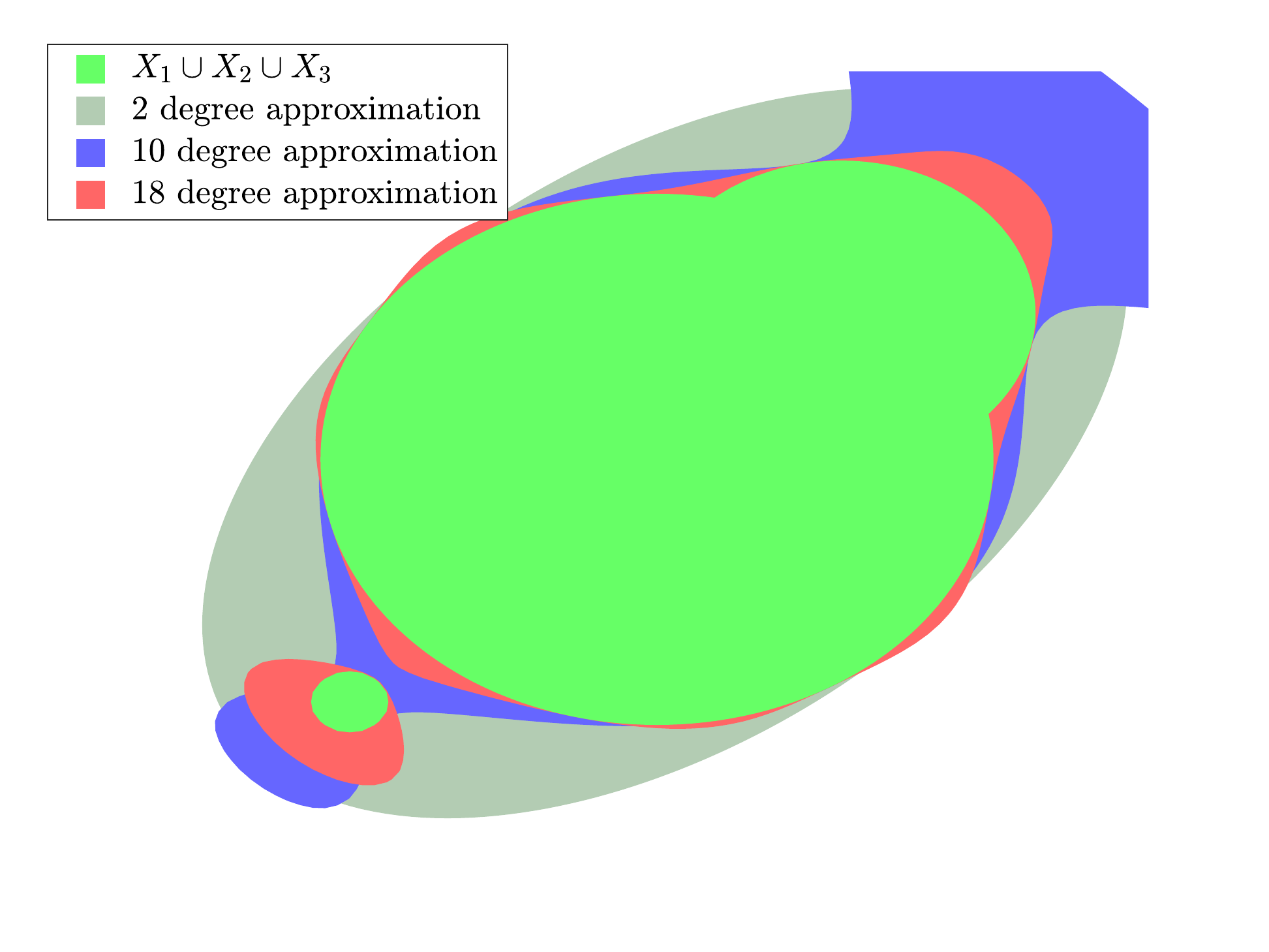}
			\label{fig:Haus union}
		}
		\subfloat[ \scriptsize Volume approximation]
		{
			\includegraphics[width=0.46 \linewidth, trim = {0cm 2cm 3cm 0cm}, clip]{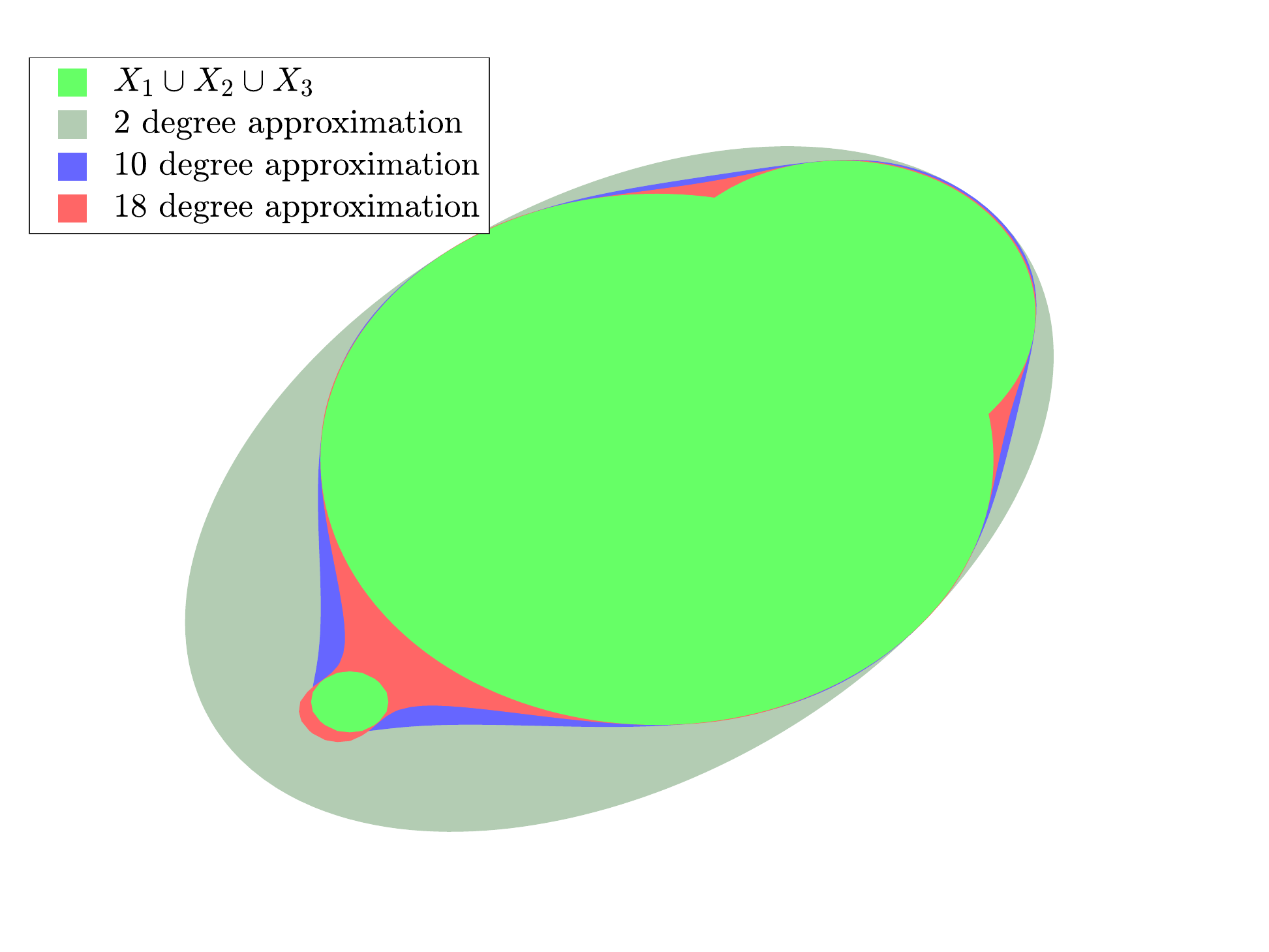}
			\label{fig:Vol union}
		}
			\vspace{-5pt}
		\caption{Plot associated with Example~\ref{ex: Approximation of unions of semialgebraic sets} showing the approximation of the union of semialgebraic sets, $X_1 \cup X_2 \cup X_3$ given in Eq.~\eqref{numerical union of semialg}, shown as the green region.} \label{fig: union of semialg} 	\vspace{-15pt}
	\end{figure}
\end{ex}

\ifthenelse{\boolean{longver}}{%

\begin{ex}[Approximation of Minkowski sums and Pontryagin differences] \label{ex: mink and pont} In~\cite{guthrie2022closed} the Minkowski sum of the following sets was heuristically approximated,
	\begin{align} \label{mink and pont numerical}
		X_1 & =\{x \in \R^2:x_1^2+x_2^2 - 0.25^2<0  \} \\ \nonumber
		X_2 & = \{x \in \R^2: x_1-0.5<0, -0.5-x_1<0,\\ \nonumber 
		& \quad  -x_2-0.5<0, x_2-x_1-0.5<0,x_1+x_2-0.5<0  \}
	\end{align}

In Fig~\ref{fig:Minkoski and Pont approx} we have plotted both the sets $X_1$ and $X_2$ as the gold and purple regions respectively (note that there is an axis scale change between Sub-figures~\ref{subfig: Mink} and~\ref{subfig: Pont}).

In Fig.~\ref{subfig: Mink} we have plotted $X_1 \oplus X_2$ as the green region, which was found by discretizing both $X_1$ and $X_2$ and adding each element together. We have also plotted our outer approximations of $X_1 \oplus X_2$ that take the form $\{x\in \R^2: J_d(x) \le 0\}$ where $J_d$ is found by solving SOS Opt.~\eqref{opt: SOS vol mink sum} for $r=1.77$, $\Lambda=[-1.25,1.25]^2$, $d=2,6$ $\&$ $12$. 

In Fig.~\ref{subfig: Pont} we have plotted $X_1 \ominus X_2$ as the green region, which was found by discretizing both $X_1$ and $X_2$. We have also plotted our inner approximations of $X_1 \ominus X_2$ that take the form $\{x\in \R^2: J_d(x) < 0\}$ where $J_d$ is found by solving SOS Opt.~\eqref{opt: SOS vol Ponty diff} for $r=1.06$, $\Lambda=[-0.75,0.75]^2$, $d=8,10$ $\&$ $16$.  As expected from the convergence proofs given in Sec.~\ref{sec: Convergence of Our Proposed SOS Programs}, in both cases we see as the degree increases our set approximation improves.

\begin{figure}
	\centering
	\subfloat[\scriptsize Minkoski sum approximation]
	{
		\includegraphics[width=0.46 \linewidth, trim = {4cm 2cm 2.25cm 1cm}, clip]{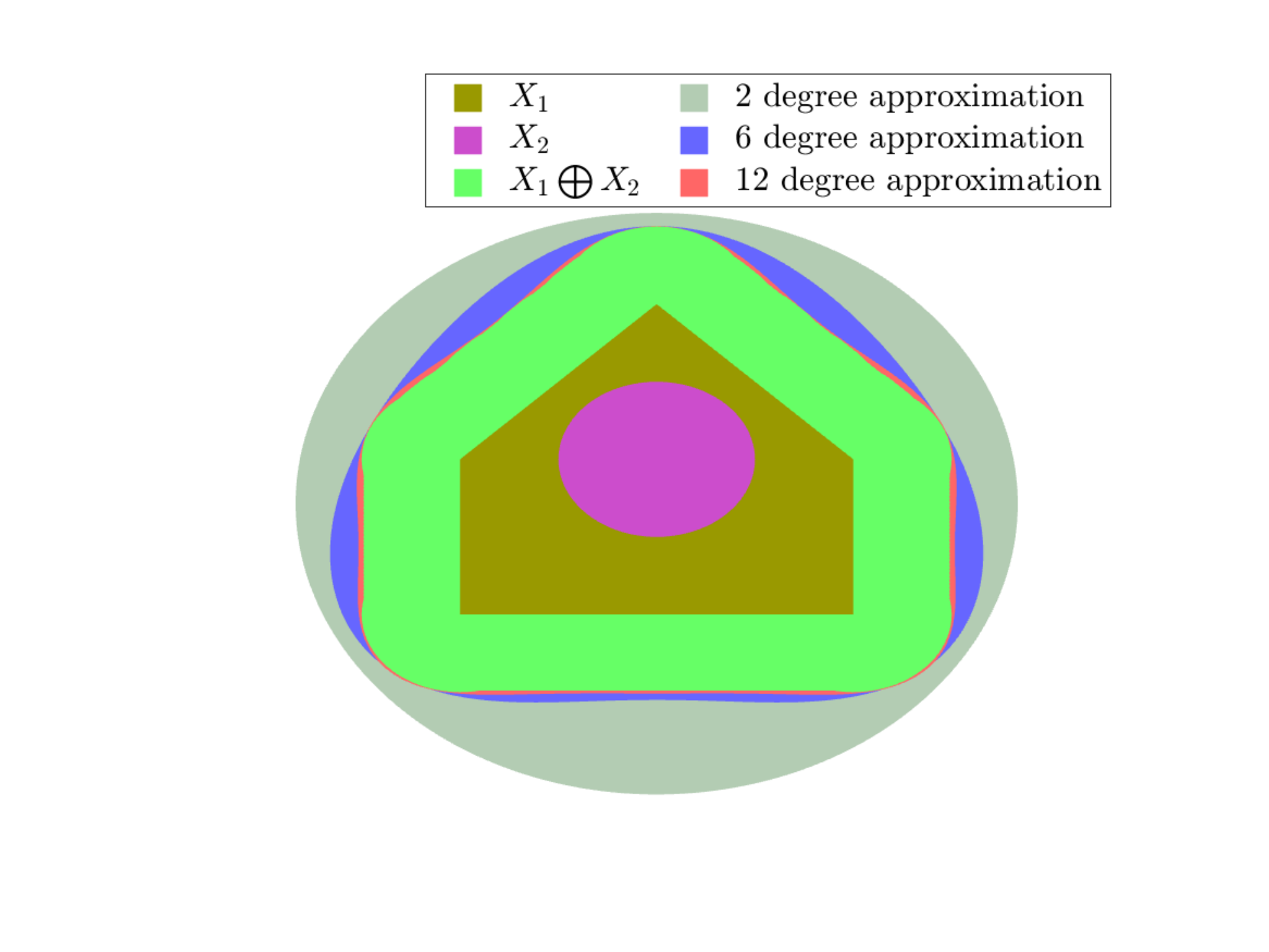}
		\label{subfig: Mink}
	}
	\subfloat[\scriptsize Pontryagin difference approximation]
	{
		\includegraphics[width=0.46 \linewidth,  trim = {4cm 3cm 3cm 0cm}, clip]{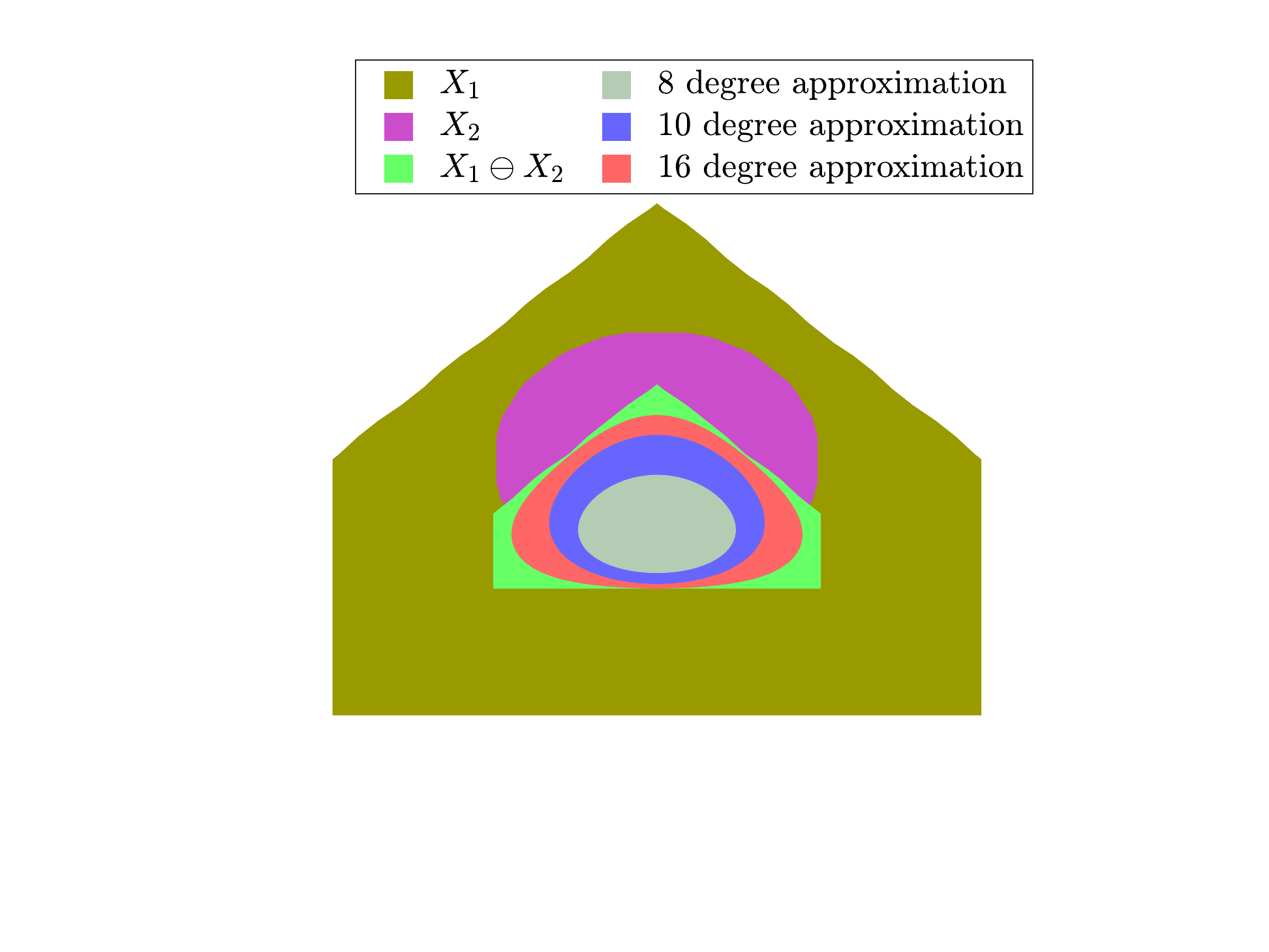}
		\label{subfig: Pont}
	}
	\caption{Plot associated with Example~\ref{ex: mink and pont} showing approximations of $X_1 \oplus X_2$ and $X_1 \ominus X_2$, where $X_1$ and $X_2$ are given in Eq.~\eqref{mink and pont numerical}.}
	\label{fig:Minkoski and Pont approx}
	\vspace{-10pt}
\end{figure}
\end{ex}

\begin{ex}[Approximation of ROAs] \label{ex: ROA}
The ROA is defined as the set of initial conditions for which a systems trajectories tend to an equilibrium point. We next consider the problem of approximating the ROA of the single machine infinite bus system given by the following nonlinear Ordinary Differential Equation (ODE):
\begin{align} \label{ODE: SMIb}
		\begin{bmatrix}	\dot{x}_1(t) \\ 	\dot{x}_2(t) \end{bmatrix}=\begin{bmatrix}
		x_2(t)\\ (1/2H)(P_m-P_e \sin(x_1(t)+\delta_{ep})-Dx_2(t))
	\end{bmatrix},
\end{align}
	where $H=0.0106$, $X_t=0.28$, $P_m=1$, $E_s=1.21$, $V=1$, $P_e=(E_s V)/(P_m X_t)$, $D=0.03$ and $\delta_{ep}=\sin^{-1}(1/P_e)$.
	
	Using a similar method to~\cite{fernandes2022combining} we simulate ODE~\eqref{ODE: SMIb} for various initial conditions, $x(0)=x_0 \in \R^2$. Using these simulations we construct a labelled data set where each data point represents an initial condition that is either an element of the ROA or not. To approximate the ROA we must then solve this machine learning binary classification problem by computing the decision boundary of our labelled data set. We solve this problem by only considering the stable data points, $\{x_i\}_{i=1}^N$. We then use our proposed SOS algorithm to compute an outer approximation of $\{x_i\}_{i=1}^N$. In Fig.~\ref{fig:ROA} we have plotted  our estimation of the ROA as the red region, that is of the form $\{x\in \R^2: J_d(x)  \le 0\}$ where $J_d$ is found by solving SOS Opt.~\eqref{opt: SOS discrete points} for $d=14$.
	\begin{figure}
		\centering
		{
			\includegraphics[width=0.9 \linewidth, trim = {2.5cm 2cm 2cm 2cm}, clip]{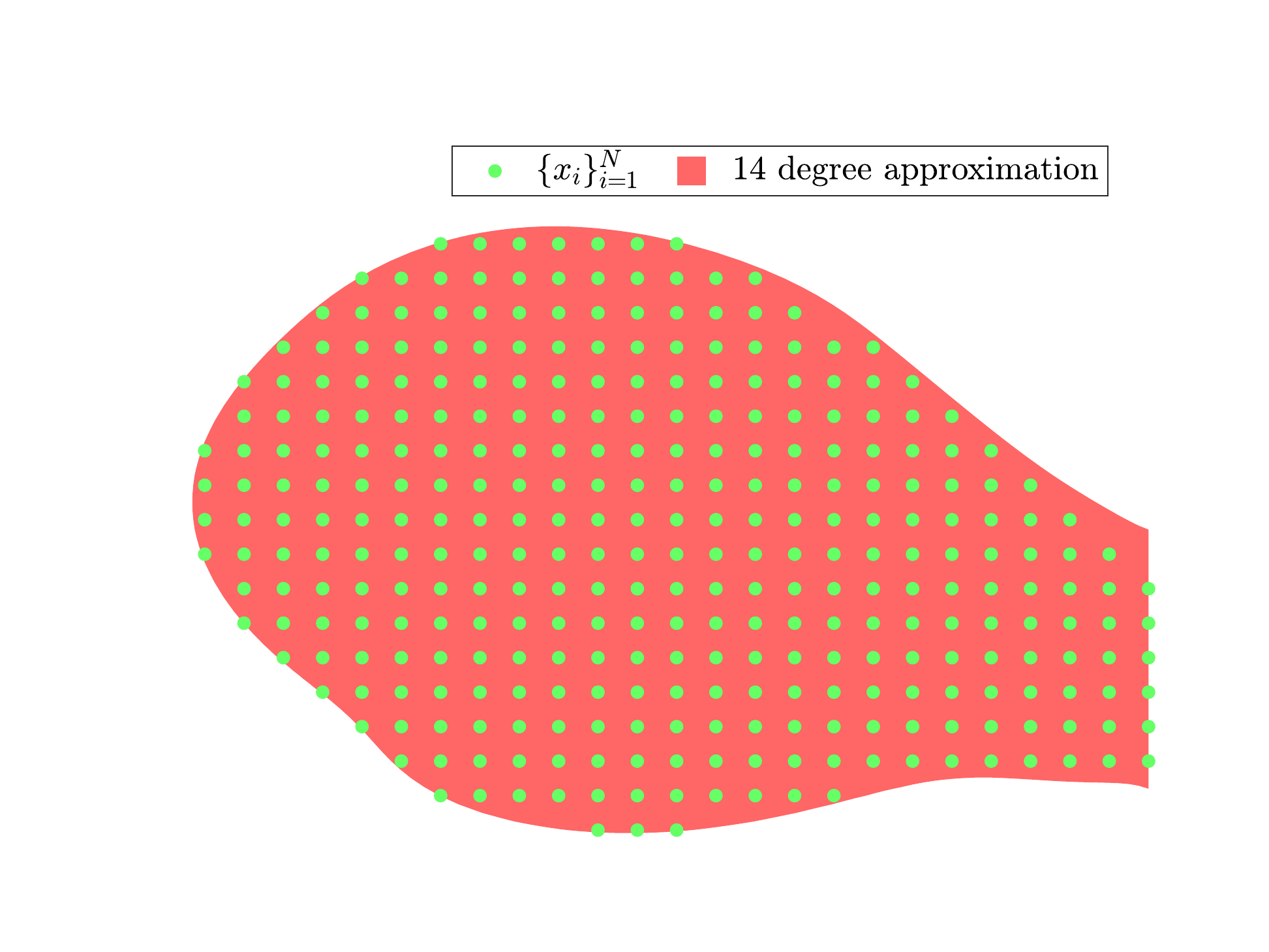}
		
		}
		\caption{Plot associated with Example~\ref{ex: ROA} showing the approximation of the ROA of the single machine infinite bus system from trajectory data.} 	\label{fig:ROA}
	\end{figure}
\end{ex}
	    }{%

}

\begin{ex}[Approximation of attractor sets] \label{ex: lorenz} \ifthenelse{\boolean{longver}}{
		    The Lorenz system can be modelled as the following nonlinear ODE,
\begin{align} \label{ODE: lorenz}
	\begin{bmatrix}	\dot{x}_1(t) \\ 	\dot{x}_2(t) \\ 	\dot{x}_3(t) \end{bmatrix}=\begin{bmatrix}
		\sigma( x_2(t)-x_1(t) )\\ \rho x_1(t) -x_2(t) -x_1(t) x_3(t) \\ x_1(t) x_2(t) - \beta x_3(t)
	\end{bmatrix},
\end{align}
	where $(\sigma, \rho,\beta)=(10,28,\frac{8}{3})$. It is well known that the Lorenz system exhibits a global attractor set in which all trajectories converge towards. }{} The problem of approximating the Lorenz attractor from data can be posed as a machine learning classification problem~\cite{shena2021approximation}. One way to approach this classification problem is by collecting discrete points, $\{x_i\}_{i=1}^N$, of terminal points of trajectories simulated for large amounts of time. Assuming our simulation time is sufficiently large, each of the discrete points, $\{x_i\}_{i=1}^N$, will be inside the attractor set. In Fig.~\ref{fig:Attractor set} we have plotted our approximation of $\{x_i\}_{i=1}^N$ as the red region given by $\{x\in \R^3: J_d(x)  \le 0\}$ where $J_d$ is found by solving SOS Opt.~\eqref{opt: SOS discrete points} for $d=15$.
	\ifthenelse{\boolean{longver}}{%
	\begin{figure}
		\centering
		\subfloat[]
		{
			\includegraphics[width=0.46 \linewidth, trim = {2cm 1cm 3cm 0cm}, clip]{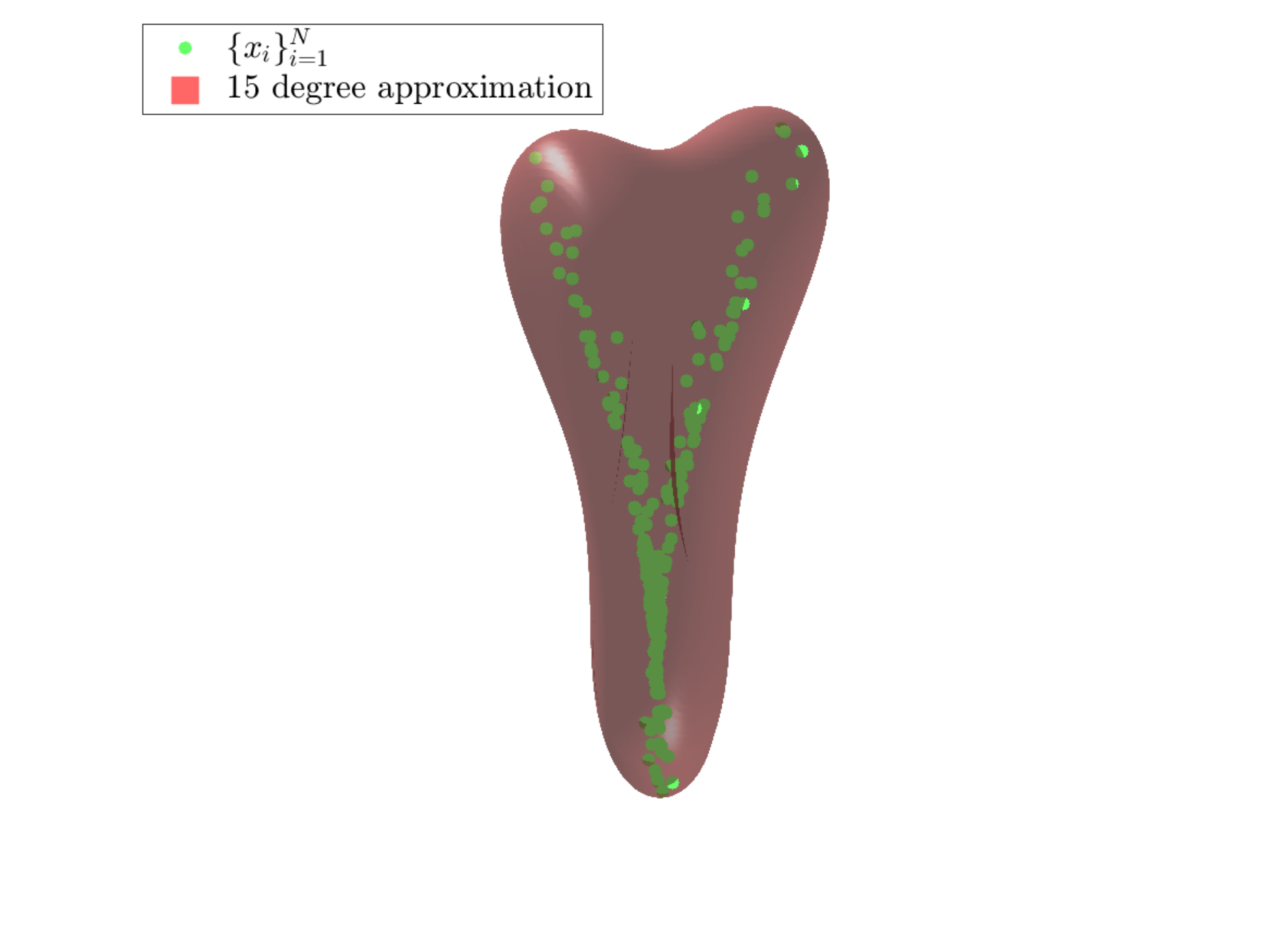}
		}
		\subfloat[]
		{
			\includegraphics[width=0.46 \linewidth,  trim = {4cm 3cm 2cm 0cm}, clip]{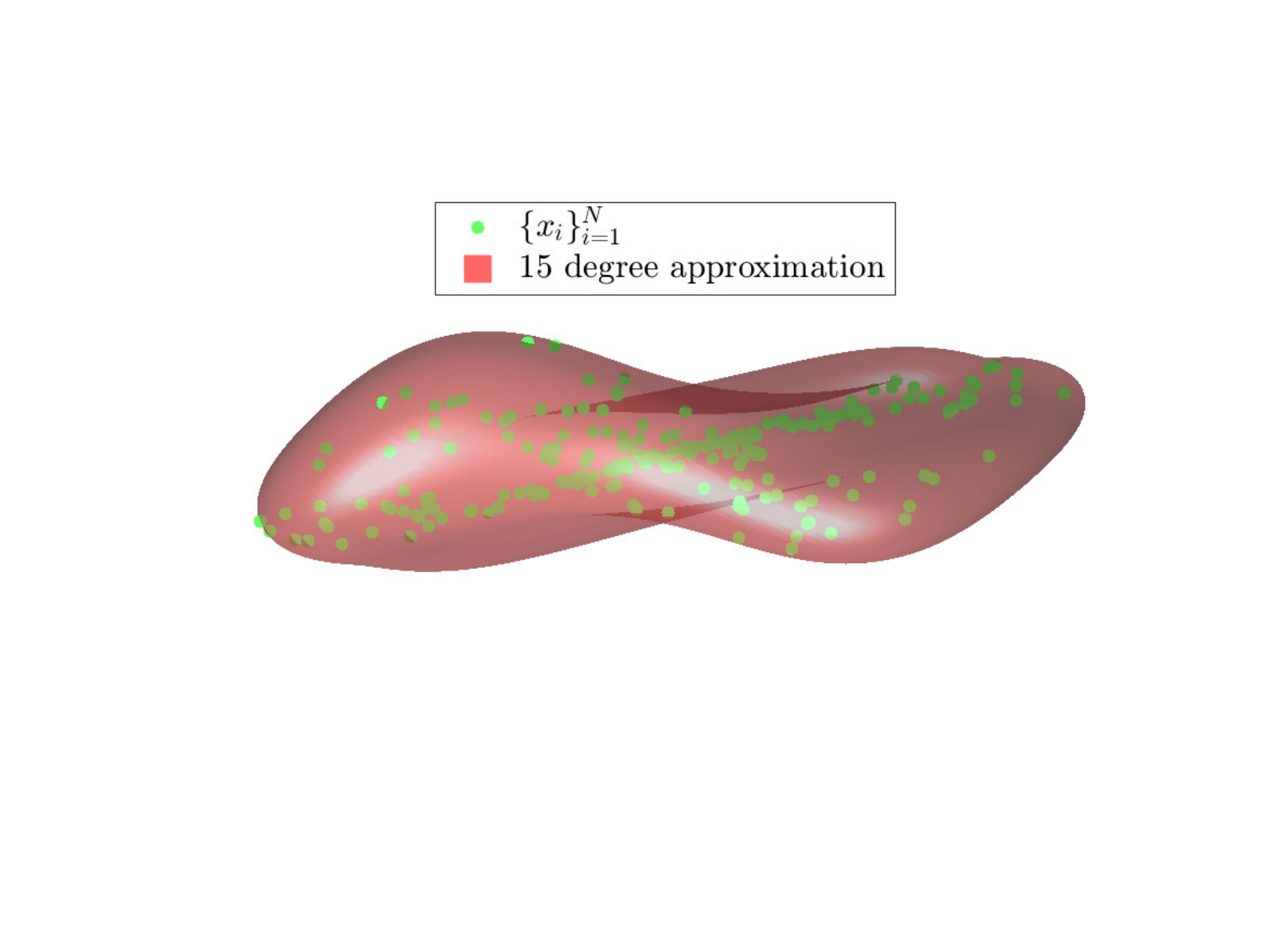}
		}\\
		\subfloat[]
		{
			\includegraphics[width=0.46 \linewidth,  trim = {3cm 1cm 2cm 0cm}, clip]{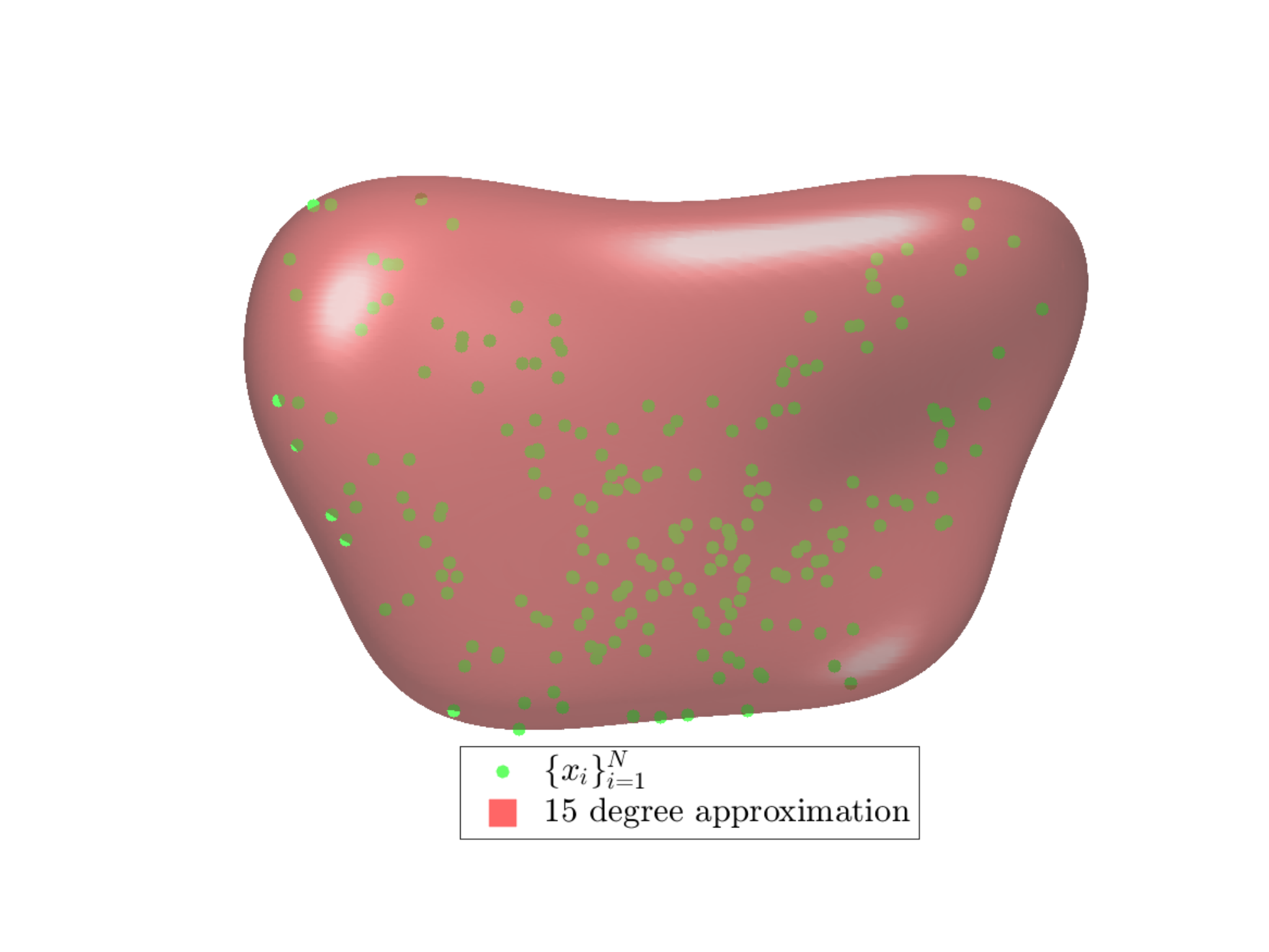}
		}
		\caption{Plot associated with Example~\ref{ex: lorenz} showing various angles of our approximation the Lorenz attractor from trajectory data.}  	\label{fig:Attractor set}
	\end{figure}
    }{%
\begin{figure}
	\centering
	\subfloat[]
	{
		\includegraphics[width=0.46 \linewidth, trim = {2cm 1cm 3cm 0cm}, clip]{fig/angle_1}
	}
	\subfloat[]
	{
		\includegraphics[width=0.46 \linewidth,  trim = {4cm 3cm 2cm 0cm}, clip]{fig/angle_2}
	}\\
	\vspace{-5pt}
	\caption{Plot associated with Example~\ref{ex: lorenz} showing various angles of our approximation the Lorenz attractor from trajectory data.}  	\label{fig:Attractor set} 	\vspace{-15pt}
\end{figure}
    }
\end{ex}

\begin{ex}[Approximation of C-space for collision free path planning] \label{ex: path planning} 



	Dubin's car is the name given to the following discrete time system,
	\begin{align} \label{eq: dubins car}
		x(t+1)=\begin{bmatrix} x_1(t) + \nu \cos(x_3(t)) \\ x_2(t) + \nu \sin(x_3(t)) \\ x_3(t) + \frac{\nu}{L} \tan(u(t)) \end{bmatrix},
	\end{align}
	where $(x_1(t), x_2(t)) \in \R^2$ is the position of the car at time $t \in \N$, $x_3(t) \in  \R$ denotes the
	angle that the car is pointing towards, $u(t) \in \R$ is the steering angle input, $\nu \in \R$ is the fixed speed of the car, and $L>0$ is a parameter that determines the turning radius of the car.
	
	In Fig.~\ref{subfig: workspace} Dubin's car at various time stages is described by the set coloured purple and is given by
	\begin{align}
		X_1=\{x \in \R^2: x_1^2 + x_2^2 -0.1^2<0 \}.
	\end{align}
\ifthenelse{\boolean{longver}}{
	Furthermore, several obstacles are described by golden coloured sublevel sets $X_2:=\cap_{i=1}^6 \{x \in \R^2: g_i(x)<0 \}$, where
	\begin{align*}
		&g_{star}(x) =0.1-2.5 x_1^2 x_2^2-0.05(x_1+x_2)^2,\\
		& g_1(x)  = g_{star}(5x+[0.2, -0.2]^\top ) \\
& g_{2}(x)  = \begin{bmatrix} 0.75+x_1 \\ -x_1 \\-x_2+0.85 \\ -((0.85-0.7)/(0.75))x_1-0.85+x_2 \end{bmatrix} \\
& g_3(x) = g_{2} \left(\begin{bmatrix} \cos((\pi+1)/2) & -\sin((\pi+1)/2) \\ \sin((\pi+1)/2) & \cos((\pi+1)/2) \end{bmatrix}^{-1} x \right) \\
& g_4(x)  = \begin{bmatrix} x_1 \\ 0.25-x_1 \\ 0.85+ x_2 \\-0.8-x_2 \end{bmatrix}, \text{ }  g_5(x)  = \begin{bmatrix} x_1-0.1 \\ 0.5-x_1 \\ 0.4+ x_2 \\-0.2-x_2 \end{bmatrix} , \\
&  g_6(x)  = \begin{bmatrix} x_1-0.4 \\ 0.75-x_1 \\ 0.4+ x_2 \\ 0.8-x_2 \end{bmatrix}.
	\end{align*}
Note, some of the sets that describe our obstacles were taken from the previous works of~\cite{cotorruelo2022sum} and~\cite{guthrie2022closed}.	   }{
	Furthermore, several obstacles are described by golden coloured sublevel sets $X_2:=\cap_{i=1}^6 \{x \in \R^2: g_i(x)<0 \}$ from ~\cite{cotorruelo2022sum,guthrie2022closed} and described in~\cite{jones2023sublevel}.
}

In Fig.~\ref{subfig: cspace} we have plotted our outer sublevel set approximation of $X_1 \oplus X_2$ as the red region. This approximation was obtained by solving Opt.~\eqref{opt: SOS vol mink sum} for $d=12$. Based on this approximation of  $X_1 \oplus X_2$ we then applied the Dynamic Programming (DP) algorithm proposed in~\cite{jones2021generalization} to compute the optimal path collision free path. That is we derived a sequence of inputs $u(0), u(1), \dots,  u(T)$ that drives the system described in Eq.~\eqref{eq: dubins car} from an initial condition, $x(0)=x_0$, to the target set, given by the blue square, in the minimum number of steps while avoiding the enlarged obstacles in the C-space, given by our approximation of $X_1 \oplus X_2$. As shown in Fig.~\ref{subfig: workspace}, solving the path planning problem in C-space for obstacles $X_1 \oplus X_2$, ensures that there is no collisions in the workspace when the shape of Dubin's car is accounted for. 
	
	\ifthenelse{\boolean{longver}}{%
		%
	\begin{figure*}
		%
		\subfloat[Workspace \label{subfig: workspace} ]{\includegraphics[width=0.47 \linewidth, trim = {1cm 1cm 1cm 0cm}, clip]{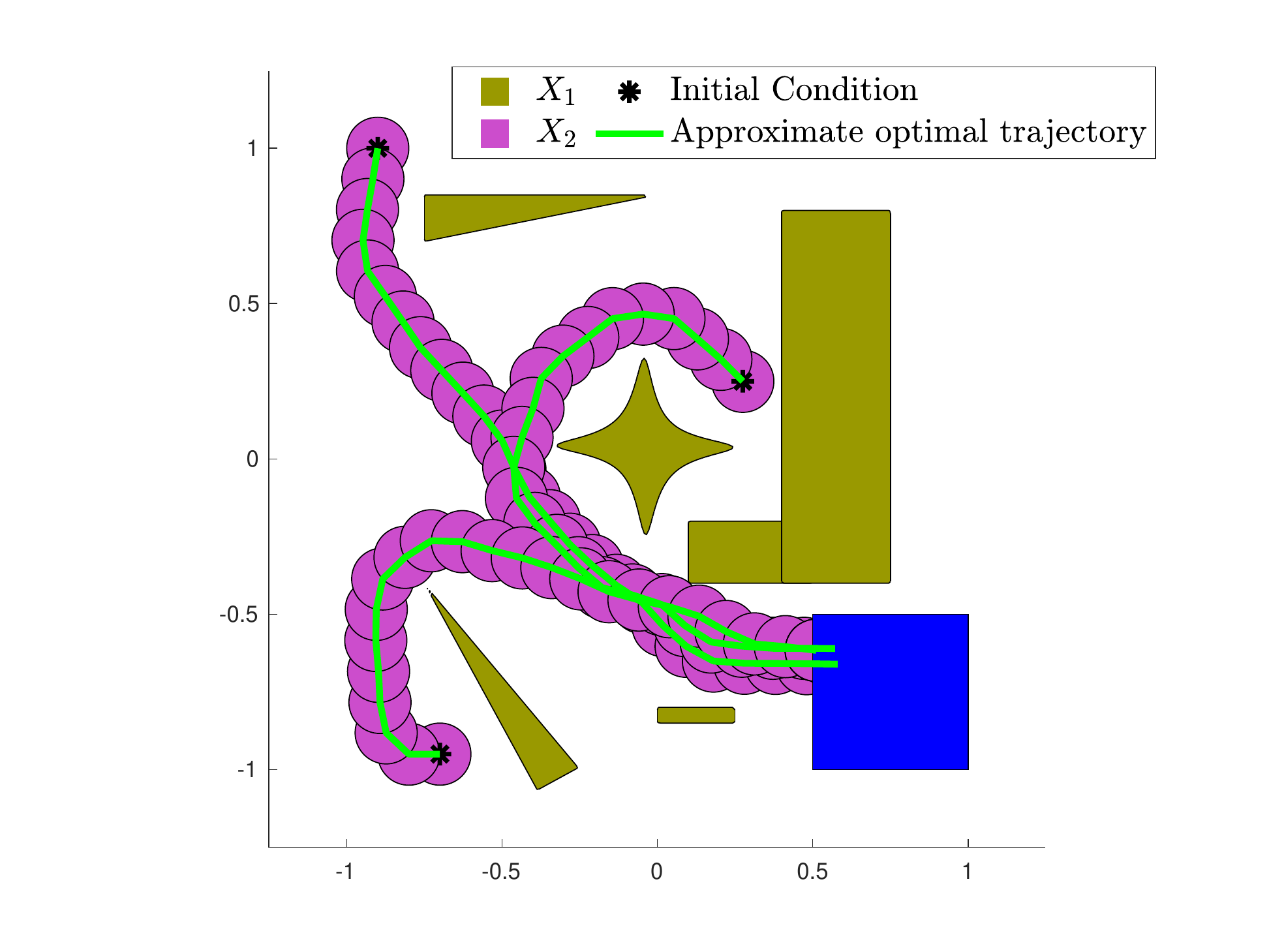}} \hfill 
			\subfloat[C-space \label{subfig: cspace}]{\includegraphics[width=0.47 \linewidth, trim = {1cm 1cm 1cm 0cm}, clip]{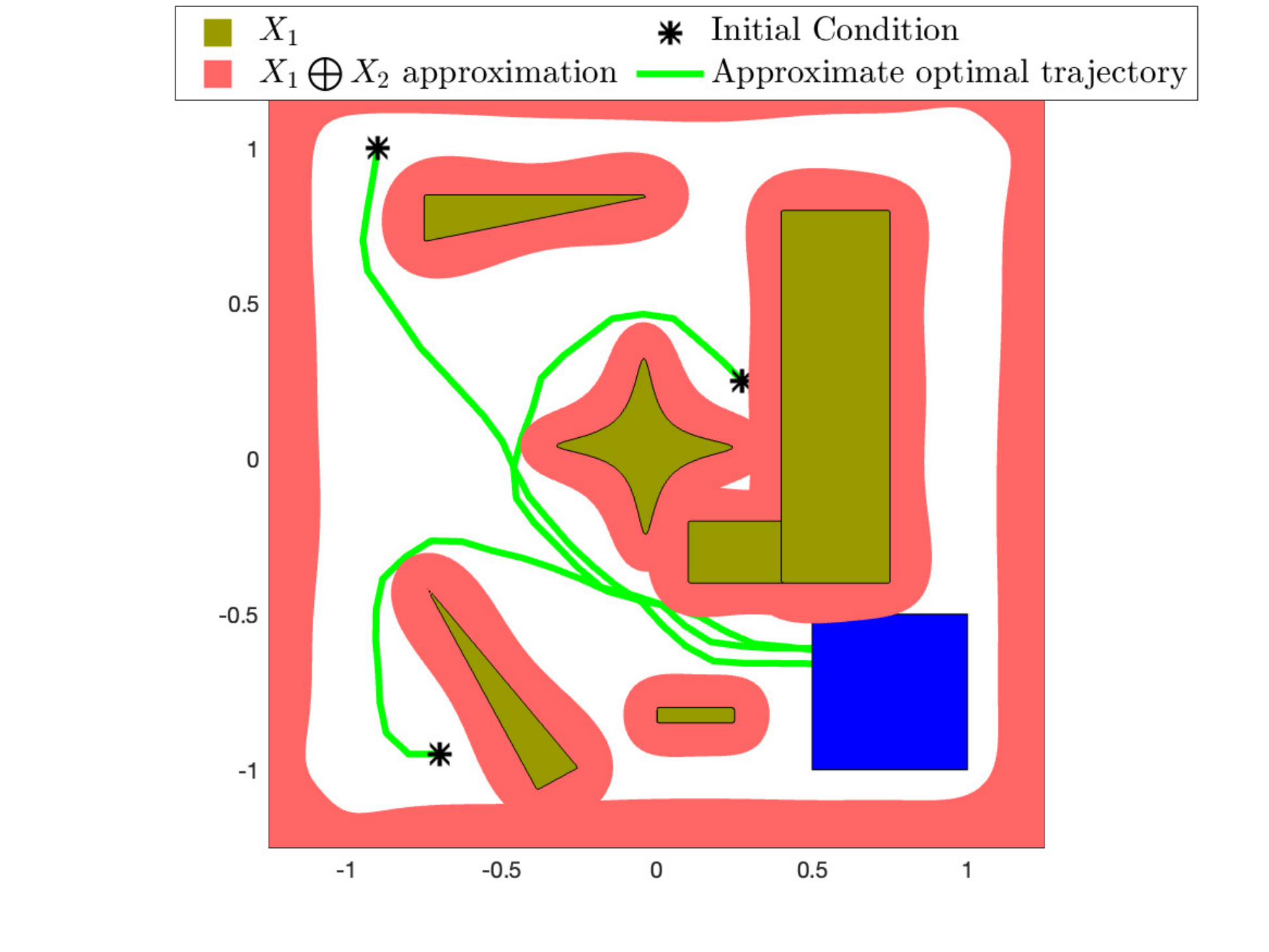}}\hfill 
			
			\caption{Plot associated with Example~\ref{ex: path planning} showing collision free trajectories of Dubin's car in both the workspace and C-space.} \label{fig: path planning}
			\vspace{-15pt}
	\end{figure*}
}{%
	\begin{figure}
	\subfloat[Workspace \label{subfig: workspace} ]{\includegraphics[width=0.5 \linewidth, trim = {2cm 1cm 1.5cm 0cm}, clip]{fig/path_planning_plot2}} \hfill 
	\subfloat[C-space \label{subfig: cspace}]{\includegraphics[width=0.5 \linewidth, trim = {2cm 1cm 1cm 0cm}, clip]{fig/path_planning_plot}}\hfill 
		\vspace{-5pt}
	\caption{Plot associated with Example~\ref{ex: path planning} showing collision free trajectories of Dubin's car in both the workspace and C-space.} \label{fig: path planning}
	\vspace{-15pt}
\end{figure}
    }
\end{ex}

\vspace{-0.5cm}
\section{Conclusion}
We have established a link between the $L^\infty$ and $L^1$ function metrics and the Hausdorff and volume set metrics, respectively, allowing us to construct SOS programs for accurately approximating sets encountered throughout control theory. \ifthenelse{\boolean{longver}}{More specifically, we have shown that if functions are close in the $L^\infty$ norm and one uniformly bounds the other, their sublevel sets are close in the Hausdorff metric. Likewise, if we change the function metric to the $L^1$ norm, the respective sublevel sets are close in the volume metric.}{} By applying our methodology to approximating sets of discrete points, we have proposed a new machine learning one-class classification algorithm that accurately finds decision boundaries for problems with low-dimensional, error-free, and dense data sets. \ifthenelse{\boolean{longver}}{
	    We have applied this new classification algorithm to the problem of approximating ROAs or attractor sets of nonlinear ODEs.}{} Furthermore, our set approximation approach allows us to numerically approximate Minkowski sums, which can be used to compute optimal collision-free paths.
\vspace{-0.25cm}

\ifthenelse{\boolean{longver}}{%
\begin{IEEEbiography}[{\includegraphics[width=1in,height=1.25in,clip,keepaspectratio]{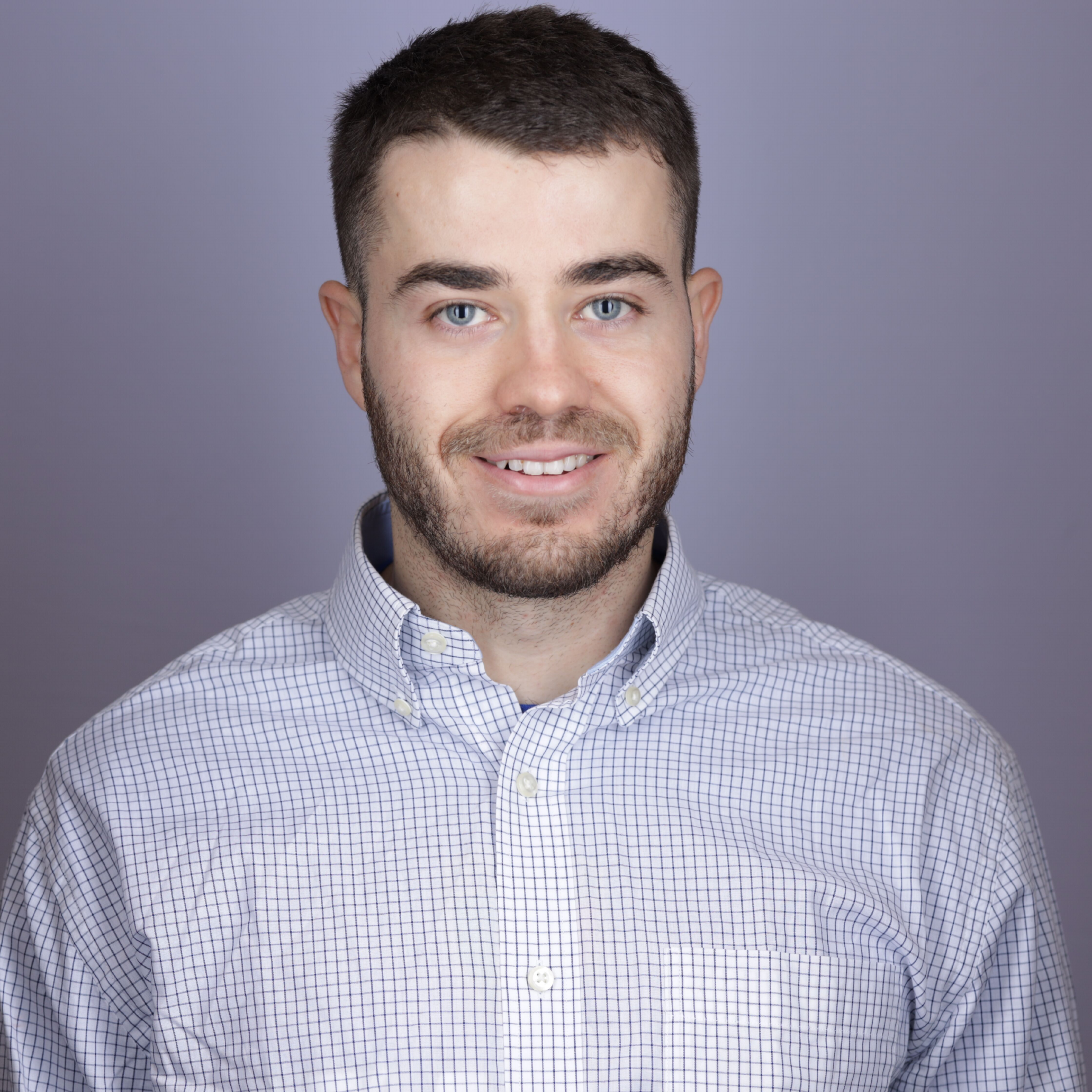}}]{Morgan Jones} \small
	received the MMath degree in
	mathematics from The University of Oxford, England in 2016 and PhD degree from Arizona State University, USA in 2021.
	Since 2022 he has been a lecturer in the department of Automatic Control and Systems Engineering at the University of Sheffield. His research primarily
	focuses on the estimation of reachable sets, attractors and regions of attraction for nonlinear ODEs. Furthermore,
	he has an interest in extensions of the dynamic programing framework to non-separable cost functions.
\end{IEEEbiography}
	    }{
}

\bibliographystyle{ieeetr}
\bibliography{bib_Approx_sublevel_set}

\ifthenelse{\boolean{longver}}{%
\section{Appendix}

\subsection{The Volume Metric} \label{sec: appendix 2}
Recall from Section~\ref{subsec: vol approx} that
\begin{align*} 
	D_V(A,B):=\mu( (A/B) \cup (B/A) ),
\end{align*} where $\mu(A)$ is the Lebesgue measure of $A \subset \R^n$.

\begin{defn} \label{def:metric}
	$D: X \times X \to \R$ is a \textit{metric} if the following is satisfied for all $x,y \in X$,
		\begin{itemize}
			\item $D(x,y) \ge 0$,
			\item $D(x,y)=0$ iff $x=y$,
			\item $D(x,y)=D(y,x)$,
			\item $D(x,z) \le D(x,y) + D(y,z)$.
		\end{itemize}
\end{defn}

The sublevel approximation results presented in this appendix are required in the proof of Theorem~\ref{thm: close in L1 implies close in V norm strict sublevel set}. 

\begin{lem}[\cite{jones2019using}] \label{lem: Dv is metric}
	Consider the quotient space,
		\[
		X:= \mcl B \pmod {\{X \subset \R^n : X \ne \emptyset, \mu(X) =0 \}},
		\]  where $\mcl B $ is the set of all Lebesgue measurable sets. Then $D_V: X \times X \to \R$ is a metric.
\end{lem}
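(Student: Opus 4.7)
The plan is to verify each of the four metric axioms from Definition~\ref{def:metric} in turn. Writing $A \triangle B := (A/B) \cup (B/A)$ for the symmetric difference, observe that $D_V(A,B) = \mu(A \triangle B)$, so the axioms reduce to standard properties of $\mu$ combined with set-theoretic identities for $\triangle$.

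Non-negativity and symmetry are immediate. Non-negativity holds because $\mu$ is a measure, and symmetry follows from the set-theoretic identity $A \triangle B = B \triangle A$, which is obvious from the defining expression. For identity of indiscernibles on the quotient space, the plan is to argue both directions at once: $A$ and $B$ represent the same equivalence class in $X$ if and only if $A \triangle B$ is either empty or has Lebesgue measure zero, which is equivalent to $\mu(A \triangle B) = 0$, i.e.\ $D_V(A,B) = 0$. This step is the only place where the quotient construction is actually used; without it, $D_V$ would merely be a pseudometric because distinct representatives of the same class have distance zero.

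The only nontrivial step is the triangle inequality. The plan is to first establish the purely set-theoretic containment
\[
A \triangle C \;\subseteq\; (A \triangle B) \cup (B \triangle C)
\]
for arbitrary Lebesgue measurable $A, B, C \subset \R^n$. This is verified by case analysis: any $x \in A \triangle C$ lies in exactly one of $A$ and $C$, and splitting on whether $x \in B$ places $x$ into either $A \triangle B$ or $B \triangle C$ in each of the resulting four subcases. Once this containment is in hand, monotonicity and finite subadditivity of $\mu$ yield
\[
D_V(A,C) \;=\; \mu(A \triangle C) \;\le\; \mu(A \triangle B) + \mu(B \triangle C) \;=\; D_V(A,B) + D_V(B,C),
\]
completing the verification.

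The main obstacle is the triangle inequality, and within that step the only substantive content is the set-theoretic containment above; once that is established the measure-theoretic conclusion is a one-line application of monotonicity and subadditivity. (A minor technical point: because $\mcl B$ is not restricted to sets of finite measure, $D_V$ is in principle an extended metric taking values in $[0,\infty]$; this does not affect any of the axioms above.)
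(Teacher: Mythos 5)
Your argument is correct and complete: the symmetric-difference containment $A \triangle C \subseteq (A \triangle B) \cup (B \triangle C)$ together with monotonicity and subadditivity of $\mu$ is the standard route to the triangle inequality, and you correctly identify that the quotient by null sets is exactly what upgrades $D_V$ from a pseudometric to a metric. The paper itself does not prove this lemma but cites it from an earlier reference, so there is no in-paper argument to compare against; your proof is the expected one, and your parenthetical caveat that $D_V$ is only an extended metric on all of $\mcl B$ (absent a finite-measure restriction) is a fair observation about the statement rather than a gap in your reasoning.
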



\begin{lem}[\cite{jones2019using}] \label{lem: D_V is related to vol}
	Consider Lebesgue measurable sets $A,B \subset \R^n$. Suppose $A$ and $B$ have finite Lebesgue measure and $B \subseteq A$, then
	\begin{align*}
		D_V(A,B)& =\mu(A/B)= \mu(A)- \mu (B).
	\end{align*}
\end{lem}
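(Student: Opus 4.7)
The plan is to prove the two equalities separately, working directly from the definition of $D_V$ and elementary properties of Lebesgue measure. The argument is entirely routine, so no serious obstacle is expected; the main care is ensuring disjointness before invoking finite additivity.

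First I would handle the identity $D_V(A,B) = \mu(A/B)$. By definition, $D_V(A,B) = \mu\bl (A/B) \cup (B/A) \br$. The hypothesis $B \subseteq A$ immediately gives $B/A = \{x \in B : x \notin A\} = \emptyset$, since every $x \in B$ lies in $A$. Therefore $(A/B) \cup (B/A) = A/B$, and the first equality follows.

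Next I would show $\mu(A/B) = \mu(A) - \mu(B)$. Because $B \subseteq A$, one has the disjoint decomposition $A = (A/B) \cup B$ where $(A/B) \cap B = \emptyset$ by construction of the set difference. Both $A/B$ and $B$ are Lebesgue measurable (as $A$ and $B$ are measurable), and both have finite measure since they are subsets of $A$. Finite additivity of $\mu$ then yields $\mu(A) = \mu(A/B) + \mu(B)$, and subtracting $\mu(B)$ (permissible because it is finite) gives the second equality.

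Combining the two identities completes the proof. The only subtlety is the finiteness assumption: without it, one could not legally write $\mu(A) - \mu(B)$, so finiteness of $\mu(A)$ (or equivalently $\mu(B)$ together with finiteness of $\mu(A/B)$) is essential for the rearrangement in the last step.
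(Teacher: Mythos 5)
Your proof is correct. The paper itself states this lemma as a cited result from an earlier work and gives no proof, and your argument — observing that $B\subseteq A$ forces $B/A=\emptyset$ so $D_V(A,B)=\mu(A/B)$, then using the disjoint decomposition $A=(A/B)\cup B$ with finite additivity and the finiteness hypothesis to justify the subtraction — is exactly the standard elementary argument one would give.
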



\begin{prop}[\cite{jones2020polynomial}] \label{prop: close in L1 implies close in V norm}
	Consider a Lebesgue measurable set $\Lambda \subset \R^n$ with finite Lebesgue measure, a function $V \in L^1(\Lambda, \R)$, and a family of functions $\{J_d \in L^1(\Lambda, \R): d \in \N\}$ that satisfies the following properties:
	\begin{enumerate}
		\item For any $ d \in \N$ we have $J_d(x) \le V(x)$ for all $x \in \Lambda$.
		\item $\lim_{d \to \infty} ||V -J_d||_{L^1(\Lambda, \R)} =0$.
	\end{enumerate}
	Then for all $\gamma \in \R$
	\begin{align} \label{sublevel sets close}
		\lim_{d \to \infty}	D_V \bigg(\{x \in \Lambda : V(x) \le \gamma\}, \{x \in \Lambda : J_d(x) \le \gamma\} \bigg) =0.
	\end{align}
\end{prop}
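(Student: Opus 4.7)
The plan is to exploit the hypothesis $J_d(x) \le V(x)$ to reduce the volume distance to the measure of a single ``error'' set, and then estimate that measure using a Markov-type inequality (powered by $L^1$ convergence) combined with continuity of Lebesgue measure from above.

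First I would set $A := \{x \in \Lambda : V(x) \le \gamma\}$ and $B_d := \{x \in \Lambda : J_d(x) \le \gamma\}$. Because $J_d(x) \le V(x)$ on $\Lambda$, any $x \in A$ satisfies $J_d(x) \le V(x) \le \gamma$, so $A \subseteq B_d \subseteq \Lambda$, all of finite Lebesgue measure. Lemma~\ref{lem: D_V is related to vol} then collapses the distance to a single measure, $D_V(A,B_d) = \mu(B_d / A) = \mu(\{x \in \Lambda : J_d(x) \le \gamma < V(x)\})$, so the whole task reduces to showing $\mu(B_d / A) \to 0$.

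The next step is to split this set, for a small threshold $\eta > 0$, as $\{J_d \le \gamma < V\} \subseteq \{V - J_d > \eta\} \cup \{\gamma < V \le \gamma + \eta\}$, where the inclusion relies on the observation that if $V - J_d \le \eta$ and $J_d \le \gamma$, then $V \le \gamma + \eta$. The first piece is controlled by a Markov-type bound, $\mu\{V - J_d > \eta\} \le \frac{1}{\eta} \int_\Lambda (V - J_d) \, dx = \frac{1}{\eta}\|V - J_d\|_{L^1(\Lambda,\R)}$, which tends to $0$ as $d \to \infty$ by hypothesis (2); here I would use $V - J_d \ge 0$ to drop the absolute value. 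The second piece does not involve $d$: the sets $E_\eta := \{x \in \Lambda : \gamma < V(x) \le \gamma + \eta\}$ are nested, decrease as $\eta \downarrow 0$, sit inside $\Lambda$ of finite measure, and have empty intersection (a point in $\bigcap_{\eta > 0} E_\eta$ would satisfy $V(x) > \gamma$ and $V(x) \le \gamma + \eta$ for every $\eta > 0$, forcing $V(x) \le \gamma$, a contradiction). Continuity of measure from above therefore yields $\mu(E_\eta) \to 0$ as $\eta \downarrow 0$.

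Finally, given $\delta > 0$, I would choose $\eta$ small enough that $\mu(E_\eta) < \delta/2$ and then $N$ large enough that $\|V - J_d\|_{L^1(\Lambda,\R)}/\eta < \delta/2$ for all $d > N$, giving $\mu(B_d / A) < \delta$ and closing the argument. The main subtlety is the continuity-of-measure step for $E_\eta$: it crucially uses the finiteness of $\mu(\Lambda)$, which is precisely the hypothesis in the proposition; if $\Lambda$ had infinite measure, an $L^1$-close lower approximation $J_d$ could still leave an uncontrollable ``slab'' of $V$ just above $\gamma$, and the argument would break.
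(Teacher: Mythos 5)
Your argument is correct: the inclusion $A \subseteq B_d$ reduces $D_V$ to $\mu(B_d / A)$ via Lemma~\ref{lem: D_V is related to vol}, the splitting into $\{V-J_d>\eta\}$ (controlled by the Chebyshev--Markov bound and hypothesis (2)) and the slab $E_\eta=\{\gamma<V\le\gamma+\eta\}$ (controlled by continuity of measure from above, which is where finiteness of $\mu(\Lambda)$ enters) is exactly the standard route for this kind of statement. Note that the paper itself does not prove this proposition --- it is imported by citation from~\cite{jones2020polynomial} --- so your write-up is a valid self-contained replacement for that external reference; the only cosmetic point is that continuity from above is usually stated for countable decreasing sequences, so one should pass to $\eta=1/k$ and use monotonicity of $\eta\mapsto\mu(E_\eta)$, which your nestedness observation already supplies.
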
 }{}

\ifthenelse{\boolean{longver}}{
\subsection{Counterexamples: When Close Functions Have Distant Sublevel Sets} \label{subsec: appendix counter examples}

As pointed out by Remark~\ref{rem: counterexamples}, relaxing the statement of Thm.~\ref{thm: uniform convegence mplies H convergence} in any way may result in the loss of sublevel set convergence. We first show that if we slightly change the conditions of Thm.~\ref{thm: uniform convegence mplies H convergence} to have $J_d(x)\le V(x)$ (rather than $ V(x) \le J_d(x)$) then we can no longer establish that the sublevel sets of $V$ and $J_d$ will be close.
\begin{cex}[Upper functional approximation is important] \label{cex: lower bound does not tend in Haus}
We show there exists $\gamma \in \R$, $\Lambda \subset \R$, $V \in L^1(\Lambda,\R)$ and $\{J_d\}_{d \in \N} \subset L^1(\Lambda,\R)$ such that $J_d(x)\le V(x)$ for all $x \in \Lambda$ and $\lim_{d \to \infty}  ||V(x) - J_d(x)||_{L^\infty(\Lambda,\R)} dx=0$ but
	\begin{align*}
			\lim_{d \to \infty} D_H\bigg(\{ x \in \Lambda : V(x) < \gamma\}, \{x \in \Lambda : J_d(x) < \gamma\} \bigg)  \ne 0.
		\end{align*}
	Let $\Lambda:=[-10,10]$, $\gamma:=1$, $V(x):=1-\mathds{1}_{[1,2]}(x)$ and $J_d(x)= 1-\mathds{1}_{[1,2]}(x) - \frac{1}{d}$. The functions, along with their corresponding sublevel sets highlighted, have been graphically represented in Fig.~\ref{fig:cx less than}. Clearly, $\lim_{d \to \infty}  ||V(x) - J_d(x)||_{L^\infty(\Lambda,\R)} dx=\lim_{d \to \infty}  \frac{1}{d}=0$. However,
	\begin{align*}
\{ x \in \Lambda : V(x) < 1 \}&= [1,2].\\
\{ x \in \Lambda : J_d(x) < 1 \}&= \Lambda.
	\end{align*}
\end{cex}
Note, Counterexample~\ref{cex: lower bound does not tend in Haus} does not contradict Prop.~\ref{prop: close in L1 implies close in V norm} that deals with the same case where $J_d$ lower bounds $V$. This is because Prop.~\ref{prop: close in L1 implies close in V norm} shows that the non-strict sublevel sets are close in the volume metric. Indeed, $\{ x \in \Lambda : V(x) \le  1 \}= \{ x \in \Lambda : J_d(x) \le  1 \}$ so there is no contradiction in this case.
}{}

\ifthenelse{\boolean{longver}}{
We next consider what happens if we change the other condition of Thm.~\ref{thm: uniform convegence mplies H convergence} where instead of having $||J_d - V||_{L^\infty(\Lambda,\R)} \to 0$ we only have $||J_d - V||_{L^1(\Lambda,\R)} \to 0$.
\begin{cex}[$L^\infty$ functional approximation is important] \label{cex: close in L1 no H}
	We show there exists $\gamma \in \R$, $\Lambda \subset \R$, $V \in L^1(\Lambda,\R)$ and $\{J_d\}_{d \in \N} \subset L^1(\Lambda,\R)$ such that $V(x) \le J_d(x)$ for all $x \in \Lambda$ and $\lim_{d \to \infty}  ||V(x) - J_d(x)||_{L^1(\Lambda,\R)} =0$ but
	\begin{align*}
		\lim_{d \to \infty} D_H\bigg(\{ x \in \Lambda : V(x) < \gamma\}, \{x \in \Lambda : J_d(x) < \gamma\} \bigg)  \ne 0.
	\end{align*}
	Let $\Lambda:=[-10,10]$, $\gamma:=1$, $V(x): =1-\mathds{1}_{\{0.5\}}(x)-\mathds{1}_{[1,2]}(x)$ and $J_d(x):=1- \mathds{1}_{[1,2-\sfrac{1}{d}]}(x)$. The functions, along with their corresponding sublevel sets highlighted, have been graphically represented in Fig.~\ref{fig:cx L1 but not Hauss}. Clearly, $\lim_{d \to \infty}  ||V(x) - J_d(x)||_{L^\infty(\Lambda,\R)} =\lim_{d \to \infty} \sup_{x \in \Lambda}\{ \mathds{1}_{\{0.5\}}(x)+\mathds{1}_{[2-\sfrac{1}{d},2]}(x)\} =1 \ne 0$ and $\lim_{d \to \infty}  ||V(x) - J_d(x)||_{L^1(\Lambda,\R)}= \lim_{d \to \infty}   \int_{\Lambda} \mathds{1}_{\{0.5\}}(x)+\mathds{1}_{[2-\sfrac{1}{d},2]}(x)dx = \lim_{d \to \infty}   \int_{2-\sfrac{1}{d}}^{2} 1dx =\lim_{d \to \infty} \frac{1}{d}=0$. However,
	\begin{align*}
		\{ x \in \Lambda : V(x) < 1 \}&= [1,2]\cup\{0.5\}  .\\
		\{ x \in \Lambda : J_d(x) < 1 \}&= [1,2-\sfrac{1}{d}].
	\end{align*}
Hence 
\begin{align*}
	&\lim_{d \to \infty} D_H\bigg(\{ x \in \Lambda : V(x) < \gamma\}, \{x \in \Lambda : J_d(x) < \gamma\} \bigg)\\
	& = \lim_{d \to \infty} D_H( [1,2]\cup\{0.5\}, [1,2-\sfrac{1}{d}])=0.5 \ne 0.
\end{align*}
\end{cex}
Although Counterexample~\ref{cex: close in L1 no H} shows that if functions are close in the $L^1$ norm then their sublevel sets may not be close in the Hausdorff metric this does not contradict Theorem~\ref{thm: close in L1 implies close in V norm strict sublevel set}, that shows that these sublevel sets must be close in the volume metric. This holds true in the case of Counterexample~\ref{cex: close in L1 no H} since 
\begin{align*}
	&\lim_{d \to \infty} D_V\bigg(\{ x \in \Lambda : V(x) < \gamma\}, \{x \in \Lambda : J_d(x) < \gamma\} \bigg)\\
	& = \lim_{d \to \infty} D_V( [1,2]\cup\{0.5\}, [1,2-\sfrac{1}{d}])\\
	&=  \lim_{d \to \infty} \mu([2-\sfrac{1}{d},2] \cup \{0.5\})=   0.
\end{align*}
Note that in the case of approximating discrete points we approximate $V(x)=1-\mathds{1}_{\{x_i\}_{i=1}^N}(x)$ using Opt.~\eqref{opt: SOS discrete points}. Since, in this case, $V$ is discontinuous we cannot approximate it in the $L^\infty$ norm by a smooth function (like a polynomial). Counterexample~\ref{cex: close in L1 no H} shows that this $L^1$ approximation may not be sufficient to approximate discrete points in the Hausdorff metric but Theorem~\ref{thm: close in L1 implies close in V norm strict sublevel set} shows that we can still use Opt.~\eqref{opt: SOS discrete points} to approximate discrete points in the volume metric.

	    }{
    }

\ifthenelse{\boolean{longver}}{
\begin{figure}
	\centering
	\begin{tikzpicture}[scale=1.5]
		\draw[->] (0,0) -- (3,0) node[right] {$x$};
		\draw[->] (0,-0.5) -- (0,1.5) node[above] {$y$};

		\draw[thick,blue,domain=0:1] plot (\x, {1});
		\draw[thick,blue,domain=1:2] plot (\x, {0});
		\draw[thick,blue,domain=2:3] plot (\x, {1});
		\draw[thick,blue] (1,0) -- (1,1);
		\draw[thick,blue] (2,0) -- (2,1);
		
		\draw[thick,red,dashed,domain=0:1] plot (\x, {1-1/5});
		\draw[thick,red,dashed,domain=1:2] plot (\x, {-1/5});
		\draw[thick,red,dashed,domain=2:3] plot (\x, {1-1/5});
		\draw[thick,red,dashed] (1,0-1/5) -- (1,1-1/5);
		\draw[thick,red,dashed] (2,0-1/5) -- (2,1-1/5);
		
		\fill[blue, opacity=0.2] (1,-0.1) rectangle (2,0.1);
		\fill[red, opacity=0.3] (0,-0.05) rectangle (3,0.05);
		
								\node at (-0.1,1) {$1$};
		\node at (1,-0.3) {$1$};
		\node at (2,-0.3) {$2$};
		
		\node[blue] at (3,1.2) {$V(x)=1-\mathds{1}_{[1,2]}(x)$};
\node[red] at (3.3,0.55) {$J_d(x)=1-\mathds{1}_{[1,2]}(x)-\sfrac{1}{d}$};
		
		\fill[blue, opacity=0.2]  (1,-1.) rectangle (1.25,-1.2) node[right, opacity=1] {$=\{x \in \Lambda: V(x)<1\}$};
		\fill[red, opacity=0.3] (1,-0.65) rectangle (1.25,-0.75) node[right, opacity=1] {$=\{x \in \Lambda: J_d(x)<1\}$};
		
		\draw[ draw=black]  (0.9,-0.5) rectangle (3.5,-1.5) ;
		
	\end{tikzpicture}
\caption{Figure associated with Counterexample~\ref{cex: lower bound does not tend in Haus} showing $\{x \in \Lambda: J_d(x)<1\} \not\to \{x \in \Lambda: V(x)<1\}$ even when $J_d \to V$ in the $L^\infty$ norm from below. } \label{fig:cx less than}
\end{figure}
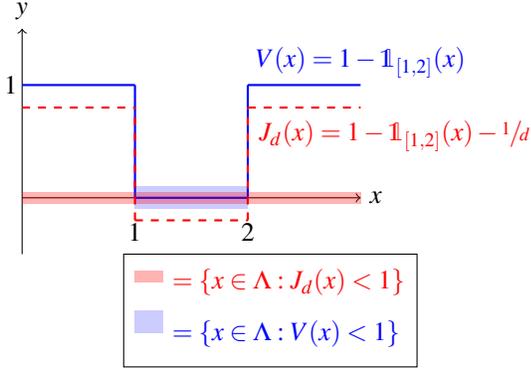
	    }{
    }

\ifthenelse{\boolean{longver}}{
	\begin{figure}
		\centering
	\begin{tikzpicture}[scale=1.5]
		\draw[->] (0,0) -- (3,0) node[right] {$x$};
		\draw[->] (0,-0.5) -- (0,1.5) node[above] {$y$};

		\draw[thick,blue,domain=0:0.499] plot (\x, {1});
	\draw[thick,blue,domain=0.499:0.5001] plot (\x, {0});
	\draw[thick,blue,domain=0.5001:1] plot (\x, {1});
		\draw[thick,blue,domain=1:2] plot (\x, {0});
		\draw[thick,blue,domain=2:3] plot (\x, {1});
		\draw[thick,blue] (0.499,0) -- (0.499,1);
		\draw[thick,blue] (2,0) -- (2,1);
				\draw[thick,blue] (1,0) -- (1,1);

			\draw[thick,red,dashed,domain=0:1] plot (\x, {1});
		\draw[thick,red,dashed,domain=1:2-1/5] plot (\x, {0});
		\draw[thick,red,dashed,domain=2-1/5:3] plot (\x, {1});
		\draw[thick,red,dashed] (2-1/5,0) -- (2-1/5,1);
				\draw[thick,red,dashed] (1,0) -- (1,1);

		\fill[blue, opacity=0.2] (0.45,-0.1) rectangle (0.55,0.1);
				\fill[blue, opacity=0.2] (1,-0.1) rectangle (2,0.1);
		\fill[red, opacity=0.3] (1,-0.05) rectangle (2-1/5,0.05);
		
								\node at (-0.1,1) {$1$};
		\node at (1,-0.3) {$1$};
		\node at (2,-0.3) {$2$};
		\node at (0.5,-0.3) {$0.5$};
		
		\node[blue] at (3,1.2) {$V(x)=1-\mathds{1}_{\{0.5\}}(x)-\mathds{1}_{[1,2]}(x)$};
		\node[red] at (3.6,0.75) {$J_d(x)=1-\mathds{1}_{[1,2-\sfrac{1}{d}]}(x)$};
		
		\fill[blue, opacity=0.2]  (1,-1.) rectangle (1.25,-1.2) node[right, opacity=1] {$=\{x \in \Lambda: V(x)<1\}$};
		\fill[red, opacity=0.3] (1,-0.65) rectangle (1.25,-0.75) node[right, opacity=1] {$=\{x \in \Lambda: J_d(x)<1\}$};
		
		\draw[ draw=black]  (0.9,-0.5) rectangle (3.5,-1.5) ;
		
	\end{tikzpicture}
		\caption{Figure associated with Counterexample~\ref{cex: close in L1 no H} showing $\{x \in \Lambda: J_d(x)<1\} \not\to \{x \in \Lambda: V(x)<1\}$ in the Hausdorff metric even when $J_d \to V$ in the $L^1$ norm from {above}. } \label{fig:cx L1 but not Hauss}
	\end{figure}
}{
}

\ifthenelse{\boolean{longver}}{
	We next consider what happens if we modify the statements of Theorems~\ref{thm: uniform convegence mplies H convergence} and~\ref{thm: close in L1 implies close in V norm strict sublevel set} that  the non-strict sublevel sets converge, rather than the strict sublevel sets converge.
	\begin{cex}[Strictness of the sublevel set is important] \label{cex: close L inf but non-strict far}
		We show there exists $\gamma \in \R$, $\Lambda \subset \R$, $V \in L^1(\Lambda,\R)$ and $\{J_d\}_{d \in \N} \subset L^1(\Lambda,\R)$ such that $V(x) \le J_d(x)$ for all $x \in \Lambda$ and $\lim_{d \to \infty}  ||V(x) - J_d(x)||_{L^\infty(\Lambda,\R)} =0$ but
		\begin{align*}
		&	\lim_{d \to \infty} D_H\bigg(\{ x \in \Lambda : V(x) \le \gamma\}, \{x \in \Lambda : J_d(x) \le \gamma\} \bigg)  \ne 0,\\
			&\lim_{d \to \infty} D_V\bigg(\{ x \in \Lambda : V(x) \le \gamma\}, \{x \in \Lambda : J_d(x) \le \gamma\} \bigg)  \ne 0.
		\end{align*}
		Let $\Lambda:=[0,3]$, $\gamma:=1$, $V(x): =1-\mathds{1}_{[1,2]}(x)$ and $J_d(x):=1- \mathds{1}_{[1,2]}(x)+\sfrac{1}{d}\mathds{1}_{[2,2.5]}(x)$. The functions, along with their corresponding sublevel sets highlighted, have been graphically represented in Fig.~\ref{fig:cx non strict}. Clearly, $\lim_{d \to \infty}  ||V(x) - J_d(x)||_{L^\infty(\Lambda,\R)} =\lim_{d \to \infty} \sup_{x \in \Lambda}\{ \sfrac{1}{d}\mathds{1}_{[2,2.5]}(x))\} =\lim_{d \to \infty} \sfrac{1}{d} = 0$. However,
		\begin{align*}
			\{ x \in \Lambda : V(x) \le 1 \}&= \Lambda=[0,3]  .\\
			\{ x \in \Lambda : J_d(x) \le 1 \}&= [0,2) \cup (2.5,3].
		\end{align*}
		Hence,
		\begin{align*}
			&\lim_{d \to \infty} D_H\bigg(\{ x \in \Lambda : V(x) \le \gamma\}, \{x \in \Lambda : J_d(x) \le \gamma\} \bigg)\\
			& = \lim_{d \to \infty} D_H( [0,3], [0,2) \cup (2.5,3])=0.25 \ne 0,
		\end{align*}
	Moroever,
			\begin{align*}
		&\lim_{d \to \infty} D_V\bigg(\{ x \in \Lambda : V(x) \le \gamma\}, \{x \in \Lambda : J_d(x) \le \gamma\} \bigg)\\
		& = \lim_{d \to \infty} D_V( [0,3], [0,2) \cup (2.5,3])=\mu([2,2.5])=0.5 \ne 0.
	\end{align*}
	\end{cex}
}{}

\ifthenelse{\boolean{longver}}{
	\begin{figure}
		\centering
		\begin{tikzpicture}[scale=1.5]
			\draw[->] (0,0) -- (3,0) node[right] {$x$};
			\draw[->] (0,-0.5) -- (0,1.5) node[above] {$y$};

			\draw[thick,blue,domain=0:1] plot (\x, {1});
			\draw[thick,blue,domain=1:2] plot (\x, {0});
			\draw[thick,blue,domain=2:3] plot (\x, {1});
			\draw[thick,blue] (2,0) -- (2,1);
			\draw[thick,blue] (1,0) -- (1,1);

			\draw[thick,red,dashed,domain=0:1] plot (\x, {1});
			\draw[thick,red,dashed,domain=1:2] plot (\x, {0});
						\draw[thick,red,dashed,domain=2:2.5] plot (\x, {1+1/5});
			\draw[thick,red,dashed,domain=2.5:3] plot (\x, {1});
			\draw[thick,red,dashed] (2,0) -- (2,1+1/5);
						\draw[thick,red,dashed] (2.5,1) -- (2.5,1+1/5);
			\draw[thick,red,dashed] (1,0) -- (1,1);

			\fill[blue, opacity=0.2] (0,-0.1) rectangle (3,0.1);
			\fill[red, opacity=0.3] (0,-0.05) rectangle (2,0.05);
					\fill[red, opacity=0.3] (2.5,-0.05) rectangle (3,0.05);

			\node at (1,-0.3) {$1$};
						\node at (-0.1,1) {$1$};
			\node at (2,-0.3) {$2$};

			\node[blue] at (3.5,1.2) {$V(x)=1-\mathds{1}_{[1,2]}(x)$};
			\node[red] at (3.8,0.75) {$J_d(x)=1-\mathds{1}_{[1,2]}(x)+\sfrac{1}{d}\mathds{1}_{[2,2.5]}(x)$};
			
			\fill[blue, opacity=0.2]  (1,-1.) rectangle (1.25,-1.2) node[right, opacity=1] {$=\{x \in \Lambda: V(x)\le1\}$};
			\fill[red, opacity=0.3] (1,-0.65) rectangle (1.25,-0.75) node[right, opacity=1] {$=\{x \in \Lambda: J_d(x)\le1\}$};
			
			\draw[ draw=black]  (0.9,-0.5) rectangle (3.5,-1.5) ;
			
		\end{tikzpicture}
		\caption{Figure associated with Counterexample~\ref{cex: close L inf but non-strict far} showing $\{x \in \Lambda: J_d(x)\le1\} \not\to \{x \in \Lambda: V(x)\le1\}$ in the Hausdorff and volume metric even when $J_d \to V$ in the $L^\infty$ norm from {above}. } \label{fig:cx non strict}
	\end{figure}
}{
}

\ifthenelse{\boolean{longver}}{
	\begin{figure}
		\centering
		\begin{tikzpicture}[scale=0.5 ]
			\begin{axis}[
				xticklabels={},
				yticklabels={},
				axis lines=middle,
				domain=0:7,
				samples=100,
				smooth,
				scale=1.5,
				]
				\addplot[blue,thick] {-exp(-x)};
				\addplot[red,dashed,thick] {-exp(-x)+1/20};
				\fill[blue, opacity=0.2] (0,-0.05) rectangle (10,0.05);
				\fill[red, opacity=0.3] (0,-0.02) rectangle (2.996,0.02);
				\fill[blue, opacity=0.2]  (2,-0.3) rectangle (2.25,-0.4); 
				\node[blue] at (4,-0.35) {$=\{x \in \Lambda: V(x)<1\}$};
				\fill[red, opacity=0.3] (2,-0.25) rectangle (2.25,-0.27); 
				\node[red]at (4,-0.26)  {$=\{x \in \Lambda: J_d(x)<1\}$};
				
				\draw[ draw=black]  (1.9,-0.2) rectangle (6,-0.45) ;
				\node[blue] at (3.5,1.2) {$V(x)=1-\mathds{1}_{[1,2]}(x)$};
				\node[red] at (3.8,0.75) {$J_d(x)=1-\mathds{1}_{[1,2]}(x)+\sfrac{1}{d}\mathds{1}_{[2,2.5]}(x)$};
				
				\node[blue] at (1.5,-0.5) {$V(x)=-e^{-x}$};
				\node[red] at (1,-0.1) {$J_d(x)=\sfrac{1}{d}-e^{-x}$};
				\node[black] at (2.99,-0.026) {$\ln(d)$};
			\end{axis}
		\end{tikzpicture}
		\caption{Figure associated with Counterexample~\ref{cex: Compact lambda} showing $\{x \in \Lambda: J_d(x) < 1\} \not\to \{x \in \Lambda: V(x) <1\}$ in the Hausdorff or volume metric even when $J_d \to V$ in the $L^\infty$ norm from {above} if $\Lambda$ is not compact. } \label{fig:cx compact Lambda}
	\end{figure}
}{
}

\ifthenelse{\boolean{longver}}{
	We next show that if we relax the condition that $\Lambda\subset \R^n$ is compact  in Theorem~\ref{thm: uniform convegence mplies H convergence}, or the condition $\Lambda \subset \R^n$ has finite Lebesque measure Theorem~\ref{thm: close in L1 implies close in V norm strict sublevel set}, then we may not get sublevel set convergence. 
	\begin{cex}[Compactness of sublevel set domain is important] \label{cex: Compact lambda}
	We show there exists $\gamma \in \R$, \textbf{non-compact} $\Lambda \subset \R$, $V \in L^1(\Lambda,\R)$ and $\{J_d\}_{d \in \N} \subset L^1(\Lambda,\R)$ such that $V(x) \le J_d(x)$ for all $x \in \Lambda$ and $\lim_{d \to \infty}  ||V(x) - J_d(x)||_{L^\infty(\Lambda,\R)} =0$ but
\begin{align*}
	&	\lim_{d \to \infty} D_H\bigg(\{ x \in \Lambda : V(x) < \gamma\}, \{x \in \Lambda : J_d(x) < \gamma\} \bigg)  \ne 0.
\end{align*}
Let $\Lambda=[0,\infty)$, $\gamma=0$, $V(x)=-e^{-x}$ and $J_d(x)=\sfrac{1}{d}-e^{-x}$. 
The functions, along with their corresponding sublevel sets highlighted, have been graphically represented in Fig.~\ref{fig:cx compact Lambda}. Clearly, $\lim_{d \to \infty}  ||V(x) - J_d(x)||_{L^\infty(\Lambda,\R)} =\lim_{d \to \infty} \sfrac{1}{d} = 0$. However,
\begin{align*}
	\{ x \in \Lambda : V(x) < 0 \}&= \Lambda=[0,\infty)  .\\
	\{ x \in \Lambda : J_d(x) <0 \}&= [0,\ln(d)).
\end{align*}
Hence,
\begin{align*}
	&\lim_{d \to \infty} D_H\bigg(\{ x \in \Lambda : V(x) < \gamma\}, \{x \in \Lambda : J_d(x) < \gamma\} \bigg)\\
	& = \lim_{d \to \infty} D_H( [0,\infty), [0,\ln(d)) )=\infty \ne 0,
\end{align*}
	\end{cex}
}{}

\ifthenelse{\boolean{longver}}{
\subsection{Polynomial Approximation} \label{subsec: appendix poly approx}
In Sec.~\ref{sec: SOS programs for set approx} we characterized several sets (intersections and unions of semialgebraic sets, Minkowski sums, Pontryagin differences and discrete points) by sublevel sets of various functions. We now show that we can approximate these functions arbitrarily well by polynomials that are also feasible to our associated SOS optimization problems. In order to approximate these functions we use the Weierstrass approximation theorem.


\begin{thm}[Weierstrass approximation theorem \cite{o1981five}] \label{thm: Weierstrass approx thm}
	Let $E \subset \R^n$ be an open set and $f \in C^1(E, \R)$. For any compact set $K \subseteq E$ and $\eps>0$ there exists  $g \in \R[x]$ such that
	\begin{align*}
		\sup_{x \in K}| f(x) -  g(x)| < \eps.
	\end{align*}
\end{thm}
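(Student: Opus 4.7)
The plan is to invoke the classical Stone--Weierstrass theorem. Recall that this result states that any subalgebra $\mathcal{A} \subseteq C(K, \R)$ which contains the constant functions and separates the points of $K$ is dense in $C(K, \R)$ with respect to the supremum norm. Reducing our target statement to an instance of Stone--Weierstrass is the cleanest route, since the hypothesis $f \in C^1(E, \R)$ is in fact far more regularity than the conclusion actually requires.

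First I would observe that since $f \in C^1(E, \R) \subset C(E, \R)$ and $K \subseteq E$ is compact, the restriction $f|_K$ lies in $C(K, \R)$. Let $\mathcal{A} := \{ p|_K : p \in \R[x] \}$ be the algebra of (real) polynomials restricted to $K$. Then $\mathcal{A}$ is a subalgebra of $C(K, \R)$ because polynomial sums, products, and scalar multiples remain polynomials; it contains the constant functions because constants are degree-zero polynomials; and it separates points, since for any distinct $x, y \in K$ there is a coordinate index $i$ with $x_i \ne y_i$, and the monomial $p(z) = z_i$ gives $p(x) \ne p(y)$.

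Applying Stone--Weierstrass to $\mathcal{A} \subseteq C(K,\R)$ then yields density in the supremum norm, so for any $\eps > 0$ there exists $g \in \R[x]$ with $\sup_{x \in K} |f(x) - g(x)| < \eps$, as claimed. The only non-trivial ingredient is Stone--Weierstrass itself, which is a standard textbook result; I would simply cite it. An alternative, more constructive route would be to enclose $K$ inside a compact axis-aligned box $B \supseteq K$, extend $f|_K$ to a continuous function on $B$ via Tietze's extension theorem, and then write out an explicit convergent sequence of multivariate Bernstein polynomials on $B$; restricting those polynomials to $K$ would produce the required $g$. The only real obstacle in either approach is arranging the extension step carefully in the constructive variant, which the Stone--Weierstrass route bypasses entirely.
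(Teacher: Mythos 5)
Your argument is correct: restricting $f$ to $K$ gives a continuous function on a compact set, the polynomials restricted to $K$ form a subalgebra of $C(K,\R)$ containing the constants and separating points via the coordinate monomials, and Stone--Weierstrass then gives density in the sup norm. Note that the paper does not actually prove this statement --- it is quoted as a classical result with a citation to the literature --- so there is no in-paper argument to compare against; your Stone--Weierstrass reduction is the standard textbook derivation and correctly observes that only continuity of $f$ on $K$, not the stated $C^1$ regularity, is needed.
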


We next show that there exists a polynomial that is feasible to Opt.~\eqref{opt: SOS Haus Unions} and that arbitrarily approximates the function, $V(x):=\min_{1 \le i \le m} g_i(x)$, whose sublevel set characterizes the set given in Eq.~\eqref{set: union strict}.
\begin{prop} \label{prop: existence of poly close to min of polys}
	Consider a compact set $\Lambda \subset \R^n$, functions $g_i \in LocLip(\Lambda,\R)$ for $1 \le i \le m$, a scalar $r>0$, and $V(x):= \min_{1 \le i \le m}\{ g_i(x) \}$. Then for any $\eps>0$ there exists $H_1,H_2 \in\R[x]$ such that
	\begin{align} \label{eq: H close to min of g}
		& \sup_{ x \in \Lambda} |V(x) - H_j(x) | < \eps \text{ for } j \in \{1,2\}, \\ \nonumber
		& H_1(x) < g_i(x) \text{ for all } x \in B_r(0) \text{ and } 1 \le i \le m,\\ \nonumber 
		& H_2 (x)> g_i(x) \text{ for all } x \in B_r(0)  \cap \underline{ \mcl Y_i } \text{ and } 1 \le i \le m,
	\end{align}
	where $ \underline{\mcl Y_i } :=\{y \in \Lambda: g_i(y) \le g_j(y) \text{ for } 1 \le j \le m  \}$.
\end{prop}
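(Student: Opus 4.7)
The plan is to extract both $H_1$ and $H_2$ from a single polynomial approximant of $V$ by shifting vertically by $\pm \eps/2$. Since $V(x) := \min_{1 \le i \le m} g_i(x)$ is a finite minimum of locally Lipschitz (hence continuous) functions, $V$ is uniformly continuous on the compact set $\Lambda$, so my first task is to produce a polynomial $P \in \R[x]$ satisfying $\sup_{x \in \Lambda} |V(x) - P(x)| < \eps/2$. Because Theorem~\ref{thm: Weierstrass approx thm} as stated requires $C^1$ regularity while $V$ is in general only $C^0$, I would first mollify: pick a smooth radial bump $\phi_\delta$ of small radius $\delta > 0$, form $V_\delta := V * \phi_\delta$ on a slight open thickening of $\Lambda$, and use uniform continuity of $V$ to choose $\delta$ so that $\sup_{x \in \Lambda} |V_\delta(x) - V(x)| < \eps/4$. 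Applying Theorem~\ref{thm: Weierstrass approx thm} to $V_\delta$ then produces a polynomial $P$ within $\eps/4$ of $V_\delta$ on $\Lambda$, giving $\sup_{x \in \Lambda} |P(x) - V(x)| < \eps/2$ by the triangle inequality.

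With $P$ in hand, I would set $H_1(x) := P(x) - \eps/2$ and $H_2(x) := P(x) + \eps/2$. The sup-norm bound in Eq.~\eqref{eq: H close to min of g} follows immediately from the triangle inequality: $|V(x) - H_j(x)| \le |V(x) - P(x)| + \eps/2 < \eps$ for $j \in \{1,2\}$. For the strict lower bound, $P(x) < V(x) + \eps/2$ yields $H_1(x) = P(x) - \eps/2 < V(x) \le g_i(x)$ for every $1 \le i \le m$ and every $x \in \Lambda$ (in particular on $B_r(0)$). For the strict upper bound, $P(x) > V(x) - \eps/2$ gives $H_2(x) > V(x)$ on all of $\Lambda$; and for any $x \in \underline{\mcl Y_i}$ the defining inequality $g_i(x) \le g_j(x)$ for every $j$ forces $V(x) = g_i(x)$, so $H_2(x) > g_i(x)$ as required.

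The main technical point is bridging the $C^1$ hypothesis of the cited Weierstrass theorem with the merely continuous function $V$. Mollification handles this cleanly because local Lipschitzness of each $g_i$ on the compact set $\Lambda$ yields a uniform modulus of continuity for $V = \min_i g_i$; alternatively one could invoke the classical Stone--Weierstrass theorem directly for continuous functions on a compact set. Everything else amounts to routine triangle-inequality bookkeeping, so I expect no further obstacles.
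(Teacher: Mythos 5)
Your construction is the same as the paper's: take a single polynomial $P$ uniformly close to $V=\min_i g_i$ and set $H_1=P-c$, $H_2=P+c$ for a small shift $c$. The triangle-inequality bookkeeping and the observation that $V(x)=g_i(x)$ on $\underline{\mcl Y_i}$ are exactly right, and your mollification step is actually an improvement over the paper, which applies its $C^1$ Weierstrass theorem (Thm.~\ref{thm: Weierstrass approx thm}) to $V$ even though a minimum of Lipschitz functions is in general only Lipschitz, not $C^1$; your detour through $V*\phi_\delta$ (or a direct appeal to Stone--Weierstrass for continuous functions) closes that loophole cleanly.

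There is one genuine gap: the constraint $H_1(x)<g_i(x)$ must hold for \emph{all} $x\in B_r(0)$, and the proposition does not assume $B_r(0)\subseteq\Lambda$. Your parenthetical ``in particular on $B_r(0)$'' silently assumes that containment; since you only approximate $V$ on $\Lambda$, you have only established $P(x)<V(x)+\eps/2$ on $\Lambda$, so your argument yields $H_1<g_i$ only on $B_r(0)\cap\Lambda$. The paper avoids this by choosing $R>0$ with $\Lambda\cup B_r(0)\subset B_R(0)$ and running the Weierstrass approximation of $V$ over all of $B_R(0)$ (implicitly using that the $g_i$, being polynomials in every application, are defined beyond $\Lambda$), so that $H_1(x)<V(x)\le g_i(x)$ holds on the whole ball $B_r(0)$. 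To repair your proof, either enlarge the mollification/approximation domain to such a $B_R(0)$, or note explicitly that in the settings where the proposition is invoked one has $B_r(0)\subseteq\Lambda$ (e.g.\ $\Lambda=\{x:\norm{x}_2\le r\}$ in Theorem~\ref{thm: SOS H convergence}). Everything else goes through unchanged.
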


\begin{proof} We first show the existence of $H_1 \in \R[x]$ that satisfies Eq.~\eqref{eq: H close to min of g}. Since $\Lambda \subset \R^n$ and {$B_r(0)$} are compact sets it follows that there exists $R>0$ such that $\Lambda \cup B_r(0) \subset B_R(0)$. Let $\eps>0$. Since $V$ is continuous by Lem.~\ref{lem: max minimum of Lipschitz functions is Lipschitz} it follows by {Thm.~\ref{thm: Weierstrass approx thm}} that there exists $P \in \R[x]$ such that $\sup_{ x \in B_R(0)} |V(x) - P(x) | <\eps/4$. Let $H_1(x):=P(x) - \eps/4$. Then
	\begin{align*}
	|V(x) - H_1(x) | & = |V(x) - P(x) + \eps/2| < |V(x) - P(x)| + \eps/4\\
	& < \eps/2 \text{ for all } x \in B_R(0),
	\end{align*}
	and hence $\sup_{ x \in \Lambda} |V(x) - H_1(x) | \le \eps/2 < \eps$. 
	
	Moreover, since $|V(x) - P(x)|<\eps/4$ for all $x \in B_R(0)$ we have that $P(x)<V(x) + \eps/4$ for all $x \in B_R(0)$ . Hence $H_1(x)<V(x) + \eps/4-\eps/4= V(x)= \min_{1 \le j \le m}\{ g_j(x) \} \le g_i(x)$ for all $1 \le i \le m$ and $x \in B_r(0) \subset B_R(0)$.
	
	We next show the existence of $H_2 \in \R[x]$ that satisfies Eq.~\eqref{eq: H close to min of g}. This time let $H_2(x):=P(x) +\eps/4$. Then
	\begin{align*}
		|V(x) - H_2(x) | & = |V(x) - P(x) - \eps/2| < |V(x) - P(x)| + \eps/4\\
		& < \eps/2 \text{ for all } x \in B_R(0),
	\end{align*}
	and hence $\sup_{ x \in \Lambda} |V(x) - H_2(x) | < \eps$. 
	
	Moreover, since $|V(x) - P(x)|<\eps/4$ for all $x \in B_R(0)$ we have that $P(x) > V(x) -  \eps/4$ for all $x \in B_R(0)$. Hence $H_2(x)>V(x) - \eps/4 + \eps/4=V(x)$ for all $x \in B_R(0)$. Now, when $x \in \underline{ \mcl Y_i}$ we have $V(x)=g_i(x)$. Therefore, $H_2(x)>V(x)=g_i(x)$ for all $x \in \underline{ \mcl Y_i } \cap B_r(0)$.

%
\end{proof}

We next show that there exists a polynomial that is feasible to Opt.~\eqref{opt: SOS Haus intersections} and that arbitrarily approximates the function, $V(x):=\max_{1 \le i \le m} g_i(x)$, whose sublevel set characterizes the set given in Eq.~\eqref{eq: semialg sets}.
\begin{cor} \label{cor: existence of poly close to max of polys}
	Consider a compact set $\Lambda \subset \R^n$, $g_i \in LocLip(\Lambda,\R)$ for $1 \le i \le m$ and $V(x):= \max_{1 \le i \le m}\{ g_i(x) \}$. Then for any $\eps>0$ there exists $H_1,H_2 \in \R[x]$ such that
	\begin{align} \label{pfeq:1 }
	& \sup_{ x \in \Lambda} |V(x) - H_j(x) | < \eps \text{ for } j \in \{1,2\}, \\ \nonumber
	& H_1(x) > g_i(x) \text{ for all } x \in B_r(0) \text{ and } 1 \le i \le m,\\ \nonumber
	& H_2(x) < g_i(x) \text{ for all } x \in B_r(0) \cap \bar{\mcl Y_i }\text{ and } 1 \le i \le m,
	\end{align}
where $\bar{\mcl Y_i } :=\{y \in \Lambda: g_i(y) \ge g_j(y) \text{ for } 1 \le j \le m  \}$.
\end{cor}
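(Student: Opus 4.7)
The plan is to derive this corollary directly from Proposition~\ref{prop: existence of poly close to min of polys} via the elementary identity $\max_{1 \le i \le m} g_i(x) = -\min_{1 \le i \le m}(-g_i(x))$. Concretely, I would set $\tilde{g}_i(x) := -g_i(x)$ for $1 \le i \le m$ and $\tilde{V}(x) := \min_{1 \le i \le m} \tilde{g}_i(x)$, so that $V(x) = -\tilde{V}(x)$. Since $g_i \in LocLip(\Lambda,\R)$ iff $\tilde{g}_i \in LocLip(\Lambda,\R)$, the hypotheses of Proposition~\ref{prop: existence of poly close to min of polys} hold for the family $\{\tilde{g}_i\}_{i=1}^m$.

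Given $\eps > 0$, I would then invoke Proposition~\ref{prop: existence of poly close to min of polys} to obtain polynomials $\tilde{H}_1, \tilde{H}_2 \in \R[x]$ satisfying $\sup_{x \in \Lambda}|\tilde{V}(x) - \tilde{H}_j(x)| < \eps$ for $j \in \{1,2\}$, together with $\tilde{H}_1(x) < \tilde{g}_i(x)$ on $B_r(0)$ and $\tilde{H}_2(x) > \tilde{g}_i(x)$ on $B_r(0) \cap \underline{\mcl Y_i^{\sim}}$, where $\underline{\mcl Y_i^{\sim}} := \{y \in \Lambda : \tilde{g}_i(y) \le \tilde{g}_j(y) \text{ for all } 1 \le j \le m\}$. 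The key bookkeeping observation is that $\underline{\mcl Y_i^{\sim}} = \bar{\mcl Y_i}$, since $\tilde{g}_i(y) \le \tilde{g}_j(y)$ is equivalent to $g_i(y) \ge g_j(y)$.

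Finally, I would define $H_1(x) := -\tilde{H}_1(x)$ and $H_2(x) := -\tilde{H}_2(x)$. Then $|V(x) - H_j(x)| = |\tilde{V}(x) - \tilde{H}_j(x)| < \eps$ for all $x \in \Lambda$ and $j \in \{1,2\}$; also $H_1(x) = -\tilde{H}_1(x) > -\tilde{g}_i(x) = g_i(x)$ on $B_r(0)$, and $H_2(x) = -\tilde{H}_2(x) < -\tilde{g}_i(x) = g_i(x)$ on $B_r(0) \cap \bar{\mcl Y_i}$, which is precisely Eq.~\eqref{pfeq:1 }.

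There is essentially no obstacle here: the entire content of the corollary is a sign flip applied to the already-established proposition. The only point that requires care is the identification $\underline{\mcl Y_i^{\sim}} = \bar{\mcl Y_i}$, which follows immediately from reversing the inequality under negation.
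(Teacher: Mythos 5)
Your proposal is correct and is essentially identical to the paper's own proof: both reduce the corollary to Proposition~\ref{prop: existence of poly close to min of polys} via the identity $\max_{1\le i\le m} g_i = -\min_{1\le i\le m}(-g_i)$, verify that the index sets $\underline{\mcl Y_i}$ for the negated family coincide with $\bar{\mcl Y_i}$, and take $H_j = -\tilde{H}_j$. No gaps.
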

\begin{proof}
Note that $\max_{1 \le i \le m}\{ g_i(x) \}= -\min_{1 \le i \le m}\{ -g_i(x) \}$. Let $\tilde{V}(x):=\min_{1 \le i \le m}\{ -g_i(x) \}= -V(x)$. By Prop.~\ref{prop: existence of poly close to min of polys} there exists $\tilde{H_1}, \tilde{H_2} \in \R[x]$ such that 
	\begin{align*}
	& \sup_{ x \in \Lambda} |\tilde{V}(x) - \tilde{H_j}(x) | < \eps \text{ for } j \in \{1,2\}, \\
	& \tilde{H_1}(x) < -g_i(x) \text{ for all } x \in B_r(0) \text{ and } 1 \le i \le m, \\
	& \tilde{H_2}(x) > -g_i(x)  \text{ for all } x \in B_r(0) \cap \tilde{\underline{\mcl Y_i} }\text{ and } 1 \le i \le m,
\end{align*}
where $\tilde{\underline{\mcl Y_i }} :=\{y \in \Lambda: -g_i(y) \le- g_j(y) \text{ for } 1 \le j \le m  \} = \{y \in \Lambda: g_i(y) \ge g_j(y) \text{ for } 1 \le j \le m  \} = \bar{\mcl Y_i } $.

Let $H_1(x):=-\tilde{H_1}(x)$ and $H_2(x):=-\tilde{H_2}(x)$. Clearly $H_1$ and $H_2$ satisfies Eq.~\eqref{pfeq:1 }, completing the proof.
\end{proof}

We next show that there exists a polynomial that is feasible to Opt.~\eqref{opt: SOS vol mink sum} and that arbitrarily approximates the function, $V(x):=\inf_{w  \in  \{z \in \Lambda: g_{2,i}(z) \le 0 \text{ for } 1 \le i \le m_2 \} } \min_{1 \le i \le m_1} g_{1,i}(x-w)$, whose sublevel set characterizes the set given in Eq.~\eqref{set: mink union and int}.

\begin{lem} \label{lem: function approx mink sum}
	Consider a compact set $\Lambda \subset \R^n$, $g_i \in LocLip(\Lambda,\R)$ for $1 \le i \le m$ and $V(x):= \inf_{w  \in  \{z \in \Lambda: g_{2,i}(z) \le 0 \text{ for } 1 \le i \le m_2 \} } \min_{1 \le i \le m_1} g_{1,i}(x-w)$. Then for any $\eps>0$ there exists $H \in \R[x]$ such that
\begin{align} \label{pfeq:111}
	& \sup_{ x \in \Lambda} |V(x) - H(x) | < \eps, \\ \nonumber
	& H(x) < g_{1,i}(x-w)  \text{ for all } x \in B_r(0), \\ \nonumber 
	& \qquad w \in \{z \in \Lambda: g_{2,j}(z) \le 0 \text{ for } 1 \le j \le m_2 \}  \text{ and } 1 \le i \le m_1.
\end{align}
\end{lem}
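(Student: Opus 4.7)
The plan is to mirror the proof of Proposition~\ref{prop: existence of poly close to min of polys}, with the additional ingredient that the infimum over $w$ in the definition of $V$ does not destroy continuity. Observe first that by definition of $V$ as an infimum of a minimum, we automatically have $V(x) \le g_{1,i}(x-w)$ for every $w \in W := \{z \in \Lambda : g_{2,j}(z) \le 0 \text{ for } 1 \le j \le m_2\}$ and every $1 \le i \le m_1$. Therefore, if we can produce a polynomial $H$ that satisfies $H(x) < V(x)$ on $B_r(0)$ and also $\sup_{x \in \Lambda}|V(x)-H(x)| < \eps$, the two required conclusions in Eq.~\eqref{pfeq:111} both follow at once.

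First I would establish that $V$ is continuous on $\Lambda$. The set $W$ is compact (by Lemma~\ref{lem: Continuous functions have compact sublevel sets}, or by the same argument used in Lemma~\ref{lem: Mink sum of sublevel sets}), and the map $(x,w) \mapsto \min_{1 \le i \le m_1} g_{1,i}(x-w)$ is continuous on $\Lambda \times W$ since each $g_{1,i}$ is locally Lipschitz (and a finite minimum of continuous functions is continuous). Since $W$ does not depend on $x$, the function $V(x) = \inf_{w \in W} \min_i g_{1,i}(x-w)$ is continuous on $\Lambda$: indeed, by the extreme value theorem the infimum is attained at some $w_x^\star \in W$, and a standard equicontinuity/compactness argument (or the Berge maximum theorem) gives continuity of $V$.

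Next, choose $R>0$ with $\Lambda \cup B_r(0) \subset B_R(0)$ and extend $V$ continuously to $B_R(0)$ (for instance via the Tietze extension theorem, or by simply applying Weierstrass directly on the compact set $\Lambda \cup \overline{B_r(0)}$, which is the only place where the conclusions are needed). By the Weierstrass approximation theorem (Thm.~\ref{thm: Weierstrass approx thm}), there exists $P \in \R[x]$ such that $\sup_{x \in \Lambda \cup \overline{B_r(0)}} |V(x)-P(x)| < \eps/2$. Define $H(x) := P(x) - \eps/2$. Then
\begin{align*}
\sup_{x \in \Lambda}|V(x)-H(x)| &\le \sup_{x \in \Lambda}|V(x)-P(x)| + \eps/2 < \eps,
\end{align*}
giving the first line of Eq.~\eqref{pfeq:111}. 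Moreover, for every $x \in B_r(0)$ we have $P(x) < V(x)+\eps/2$, so $H(x) < V(x)$, and combining with the observation $V(x) \le g_{1,i}(x-w)$ for all $w \in W$ and $1 \le i \le m_1$ yields the remaining strict inequality in Eq.~\eqref{pfeq:111}.

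The one step requiring care is the continuity of $V$; the rest is a direct application of Weierstrass followed by the standard ``shift down by $\eps/2$'' trick used in Proposition~\ref{prop: existence of poly close to min of polys} and Corollary~\ref{cor: existence of poly close to max of polys}. The analogous statement needed for the Pontryagin case (Corollary~\ref{cor: function approx ponty diff}) would follow by replacing $\inf$ with $\sup$ and shifting up by $\eps/2$ instead.
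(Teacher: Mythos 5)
Your proposal is correct and follows essentially the same route as the paper's proof: continuity of $V$ (the paper invokes Lem.~\ref{lem: max minimum of Lipschitz functions is Lipschitz} where you argue via compactness of the constraint set and the Berge/extreme-value argument), then Weierstrass approximation on a compact set containing $\Lambda \cup B_r(0)$, followed by the downward shift of $P$ by a fraction of $\eps$ so that $H < V \le g_{1,i}(x-w)$. The only cosmetic difference is your use of an $\eps/2$ shift where the paper uses $\eps/4$; both yield the stated bounds.
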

\begin{proof}
	Since $\Lambda \subset \R^n$ and $B_r(0)$ are compact sets there exists $R>0$ such that $\Lambda \cup B_r(0) \subset B_R(0)$. Now, by {Lem.~\ref{lem: max minimum of Lipschitz functions is Lipschitz}} it follows that $V$ is a continuous function. Hence, by {Thm.~\ref{thm: Weierstrass approx thm}}, for any $\eps>0$ there exists a polynomial $P \in \R[x]$ such that
	\begin{align*}
	\sup_{ x \in B_R(0)} |V(x) - P(x) | < \frac{\eps}{4}.
	\end{align*}
Let us consider $H(x):=P(x) -  \frac{\eps}{4}$. Then
\begin{align*}
	|V(x) - H(x)|  \le 	|V(x) - P(x)|  + \frac{\eps}{4} < \frac{\eps}{2} \text{ for } x \in B_R(0).
\end{align*}
Hence, 	$\sup_{ x \in \Lambda} |V(x) - H(x) | < \eps$.

Also note that since $|V(x) - P(x)| < \frac{\eps}{4}$ for all $x \in B_R(0)$ it follows that $P(x)< V(x) + \frac{\eps}{4}$ and hence $H(x)= P(x)- \frac{\eps}{4}< V(x)$ for all $x \in B_r(0) \subset B_R(0)$. Therefore
\begin{align*}
	P(x)& < V(x)= \inf_{u  \in  \{z \in \Lambda: g_{2,j}(z) \le 0 \text{ for } 1 \le j \le m_2 \} } \min_{1 \le j \le m_1} g_{1,j}(x-u)\\
	& \le  \inf_{u  \in  \{z \in \Lambda: g_{2,j}(z) \le 0 \text{ for } 1 \le j \le m_2 \} }  g_{1,i}(x-u) \\
	& \le g_{1,i}(x-w) \text{ for all } x \in B_r(0), 1 \le i \le m_1,  \text{ and } \\
	& \qquad \qquad  w \in \{z \in \Lambda: g_{2,j}(z) \le 0 \text{ for } 1 \le j \le m_2 \}.
\end{align*}
Thus we have shown Eq.~\eqref{pfeq:111} completing the proof.
\end{proof}

We next show that there exists a polynomial that is feasible to Opt.~\eqref{opt: SOS vol Ponty diff} and that arbitrarily approximates the function, $V(x):=\sup_{w  \in  \{z \in \Lambda: g_{2,i}(z) \le 0 \text{ for } 1 \le i \le m_2 \} } \max_{1 \le i \le m_1} g_{1,i}(x+w)$, whose sublevel set characterizes the set given in Eq.~\eqref{set: pont int and int}.

\begin{cor} \label{cor: function approx ponty diff}
	Consider a compact set $\Lambda \subset \R^n$, $g_i \in LocLip(\Lambda,\R)$ for $1 \le i \le m$ and $V(x):= \sup_{w  \in  \{z \in \Lambda: g_{2,i}(z) \le 0 \text{ for } 1 \le i \le m_2 \} } \max_{1 \le i \le m_1} g_{1,i}(x+w)$. Then for any $\eps>0$ there exists $H \in \R[x]$ such that
	\begin{align} \label{pfeq:111}
		& \sup_{ x \in \Lambda} |V(x) - H(x) | < \eps, \\ \nonumber
		& H(x) > g_{1,i}(x+w)  \text{ for all } x \in B_r(0), \\ \nonumber 
		& \qquad w \in \{z \in \Lambda: g_{2,j}(z) \le 0 \text{ for } 1 \le j \le m_2 \}  \text{ and } 1 \le i \le m_1.
	\end{align}
\end{cor}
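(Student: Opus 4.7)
The plan is to mimic almost verbatim the proof of Lem.~\ref{lem: function approx mink sum}, but with the signs in the correction term and in the bounding inequality flipped, since $V$ is now an $\sup$-$\max$ rather than an $\inf$-$\min$. First I would verify that $V$ is continuous on $\Lambda$. Each $g_{1,i}$ is locally Lipschitz, so $(x,w) \mapsto \max_{1 \le i \le m_1} g_{1,i}(x+w)$ is continuous; moreover the feasibility set $W:=\{z \in \Lambda : g_{2,j}(z) \le 0 \text{ for } 1 \le j \le m_2\}$ is a closed subset of the compact set $\Lambda$ and hence compact. The supremum of a jointly continuous function over a fixed compact set is continuous in the remaining variable, which is the exact $\sup$-analog of the continuity fact the author invokes at the start of the proof of Lem.~\ref{lem: function approx mink sum}.

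Next, since $\Lambda$ and $B_r(0)$ are both compact I can pick $R>0$ with $\Lambda \cup B_r(0) \subset B_R(0)$, and then apply Thm.~\ref{thm: Weierstrass approx thm} to obtain a polynomial $P \in \R[x]$ satisfying
\[
\sup_{x \in B_R(0)} |V(x)-P(x)| < \frac{\eps}{4}.
\]
I would then define $H(x):=P(x)+\eps/4$, mirroring the $P(x)-\eps/4$ construction of Lem.~\ref{lem: function approx mink sum}. On $B_R(0)$ this gives $|V(x)-H(x)| \le |V(x)-P(x)|+\eps/4 < \eps/2 < \eps$, which yields the first line of Eq.~\eqref{pfeq:111}.

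For the strict upper bound, the estimate $|V(x)-P(x)|<\eps/4$ implies $P(x) > V(x)-\eps/4$ on $B_R(0)$, hence $H(x) = P(x)+\eps/4 > V(x)$ for all $x \in B_R(0)$. By definition of $V$ as a sup-max, for every $x \in B_r(0)$, every admissible $w \in W$ and every $1 \le i \le m_1$ we have
\[
V(x) = \sup_{u \in W}\max_{1 \le j \le m_1} g_{1,j}(x+u) \ge g_{1,i}(x+w),
\]
so $H(x) > V(x) \ge g_{1,i}(x+w)$, which is exactly the remaining line of Eq.~\eqref{pfeq:111}.

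I do not expect a genuine obstacle: the corollary is the $\sup$-$\max$ mirror image of Lem.~\ref{lem: function approx mink sum}, and the only non-routine point is the continuity of $V$, which follows from the same compactness-plus-extreme-value argument already used in the $\inf$-$\min$ setting. An equivalent route would be a direct sign-flip reduction, applying Lem.~\ref{lem: function approx mink sum} to $\tilde V(x):=-V(x)$ (with $u=-w$ and $\tilde g_{1,i}:=-g_{1,i}$) and taking $H:=-\tilde H$; both routes are essentially cosmetic variants, so I would present the direct Weierstrass construction above for clarity.
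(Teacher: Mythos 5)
Your proposal is correct and is essentially the paper's own proof: the paper disposes of Cor.~\ref{cor: function approx ponty diff} by stating it ``follows by a similar argument to Lem.~\ref{lem: function approx mink sum}'', and your sign-flipped Weierstrass construction ($H=P+\eps/4$ in place of $P-\eps/4$, with $H(x)>V(x)\ge g_{1,i}(x+w)$) is precisely that argument, with continuity of the sup-max $V$ covered by the same compactness fact (Lem.~\ref{lem: max minimum of Lipschitz functions is Lipschitz}) the paper invokes in the inf-min case.
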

\begin{proof}
	Follows by a similar argument to Lem.~\ref{lem: function approx mink sum}.
\end{proof}

We next show that there exists a polynomial that is feasible to Opt.~\eqref{opt: SOS discrete points} and that arbitrarily approximates the function, $V(x):=1-\mathds{1}_{\{x_i\}_{i=1}^N}(x)$, whose sublevel set characterizes the set given by $X=\{x_i\}_{i=1}^N$. 

\begin{prop} \label{prop: L1 poly approx of indicator}
Consider a compact set $\Lambda \subset \R^n$ and some discrete points $\{x_i\}_{i=1}^N \subset \Lambda$. Then, for any $\eps>0$ there exists a polynomial $H \in \R[x]$ such that
\begin{align} \label{eq: poly close to indicator function}
||H(x) - V(x)||_{L^1(\Lambda,\R)}< \eps, \\ \nonumber
	H(x_i)<0 \text{ for all } i \in \{1,...,N\}, \\ \nonumber
	H(x) < 1 \text{ for all } x \in \Lambda,
\end{align}
where $V(x)=1-\mathds{1}_{\{x_i\}_{i=1}^N}(x)$. 
\end{prop}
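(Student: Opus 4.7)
The plan is to first approximate $V$ in $L^1(\Lambda,\R)$ by a $C^1$ function that equals approximately $1$ almost everywhere on $\Lambda$ but dips below zero at each $x_i$, and then invoke the Weierstrass theorem (Theorem~\ref{thm: Weierstrass approx thm}) to approximate that smooth function uniformly by a polynomial; a small uniform downward shift of the polynomial will then enforce the strict inequalities in Eq.~\eqref{eq: poly close to indicator function}.

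First I would fix small parameters $\eta, \delta > 0$ with $\delta$ small enough that the balls $\{B_\delta(x_i)\}_{i=1}^N$ are pairwise disjoint and $\mu\bigl(\bigcup_{i=1}^N B_\delta(x_i)\bigr) < \eps/4$. Using the $C^1$ bump $\phi_i(x) := \max\bigl\{0,\, (1 - \|x-x_i\|_2^2/\delta^2)^2 \bigr\}$ (which is smooth away from $\partial B_\delta(x_i)$ and $C^1$ across the boundary since both value and gradient vanish at $\|x-x_i\|_2 = \delta$), I would construct $f : \R^n \to \R$ by
\[
f(x) := 1 - \eta - (2-\eta)\sum_{i=1}^N \phi_i(x).
\]
By disjointness of the balls, $f(x_i) = -1$ for each $i$, $f(x) = 1-\eta$ for $x \notin \bigcup_i B_\delta(x_i)$, and $-1 \le f(x) \le 1-\eta$ on $\R^n$. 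Since $V = 1$ almost everywhere on $\Lambda$, the triangle inequality gives
\[
\|f - V\|_{L^1(\Lambda,\R)} \le \eta\,\mu(\Lambda) + 2\,\mu\bigl(\tfrac{}{}\bigcup_{i=1}^N B_\delta(x_i)\bigr) < \eta\,\mu(\Lambda) + \eps/2,
\]
which can be made smaller than $\eps/2$ by choosing $\eta < \eps/(4\mu(\Lambda))$.

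Next, by Theorem~\ref{thm: Weierstrass approx thm} applied to $f \in C^1(\R^n,\R)$ on the compact set $\Lambda$, for any $\alpha > 0$ there exists $P \in \R[x]$ with $\sup_{x \in \Lambda} |P(x) - f(x)| < \alpha$. I would then define $H(x) := P(x) - \alpha$. The uniform bound $P(x) < f(x) + \alpha$ gives $H(x) < f(x) \le 1-\eta < 1$ for all $x \in \Lambda$, and $H(x_i) < f(x_i) = -1 < 0$ for each $i$. For the $L^1$ error, $|H - f| \le |P - f| + \alpha < 2\alpha$ uniformly on $\Lambda$, hence
\[
\|H - V\|_{L^1(\Lambda,\R)} \le \|H - f\|_{L^1(\Lambda,\R)} + \|f - V\|_{L^1(\Lambda,\R)} < 2\alpha\,\mu(\Lambda) + \eps/2,
\]
and choosing $\alpha < \eps/(4\mu(\Lambda))$ yields $\|H - V\|_{L^1(\Lambda,\R)} < \eps$, as required.

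The main obstacle is not the $L^1$ approximation itself, which is a routine mollification-plus-Weierstrass argument, but rather accommodating the three strict requirements in Eq.~\eqref{eq: poly close to indicator function} simultaneously with a single polynomial. This is handled by building two layers of slack into the intermediate function $f$: the parameter $\eta$ keeps $f$ bounded away from $1$ on $\Lambda$, and the dip value $-1$ at each $x_i$ keeps $f$ bounded away from $0$ at the discrete points; both slacks are then large enough to absorb the Weierstrass error after the downward shift $P \mapsto P - \alpha$, while the $L^1$ contribution of the bumps is kept arbitrarily small by shrinking $\delta$.
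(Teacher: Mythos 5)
Your proposal follows essentially the same route as the paper's proof: build a smooth (or $C^1$) surrogate for $V$ that dips strictly below $0$ at each $x_i$ and stays at or below $1$ elsewhere while contributing negligibly to the $L^1$ error, apply the Weierstrass theorem on a compact set containing $\Lambda$, and shift the resulting polynomial down by the uniform error so that it strictly underestimates the surrogate (the paper uses a $C^\infty$ bump scaled to have height $2$ at the origin, so that $F(x_i)\le -1$ and $F\le 1$, and shifts by $\eps/(4(1+\mu(\Lambda)))$). Two slips are worth fixing, though neither is a conceptual gap. First, your bump is written as $\phi_i(x)=\max\{0,(1-\|x-x_i\|_2^2/\delta^2)^2\}$; since the argument of the $\max$ is already a square, this is identically $(1-\|x-x_i\|_2^2/\delta^2)^2$, a polynomial that does not vanish outside $B_\delta(x_i)$ and in fact grows like $\|x\|^4$, so $f$ would not equal $1-\eta$ off the balls and the $L^1$ estimate would fail; you clearly intend $\phi_i(x)=\bigl(\max\{0,\,1-\|x-x_i\|_2^2/\delta^2\}\bigr)^2$, which is the $C^1$ function your parenthetical describes. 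Second, the constants do not quite close: with $\mu\bigl(\cup_i B_\delta(x_i)\bigr)<\eps/4$ and $\eta<\eps/(4\mu(\Lambda))$ you get $\|f-V\|_{L^1}<3\eps/4$, not $\eps/2$, so the final bound becomes $2\alpha\mu(\Lambda)+3\eps/4<5\eps/4$; simply tighten the budget (e.g.\ require each of $\eta\mu(\Lambda)$, $2\mu(\cup_i B_\delta(x_i))$ and $2\alpha\mu(\Lambda)$ to be below $\eps/4$), and, to cover the degenerate case $\mu(\Lambda)=0$, divide by $1+\mu(\Lambda)$ as the paper does. With these repairs the argument is correct and matches the paper's.
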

\begin{proof}
	We first show that there exists a smooth function that satisfies Eq.~\eqref{eq: poly close to indicator function}. We then approximate this function by a polynomial.
	
	Let $F(x):=1-\sum_{i=1}^N \eta \left( \frac{x-x_i}{\delta} \right)$, where $\eta \in C^\infty(\R^n, [0,\infty))$  is the bump function given by
	\begin{align*}
		\eta(x) = \begin{cases} 2e \exp \left( \frac{1}{||x||_2^2 -1}\right) \text{ for } ||x||_2<1\\
			0 \text{ otherwise.} \end{cases}
	\end{align*}
For more information on bump functions see~\cite{evans1998partial}.

Let $C:=\int_{\R^n} \eta(x) dx < \infty$ and $0<\delta< \left( \frac{\eps}{2 NC} \right)^{\frac{1}{n}}$, then
\begin{align} \label{F L1}
&||V(x) - F(x)||_{L^1(\Lambda,\R)} \le \sum_{i=1}^N \int_{\R^n} \eta \left( \frac{x-x_i}{\delta} \right) dx\\ \nonumber
& =\delta^n \sum_{i=1}^N \int_{\R^n} \eta(x) dx=\delta^n NC<\frac{\eps}{2}.
\end{align}
Moreover,
\begin{align} \label{F 1}
	F(x_i)=1 - \eta(0) - \sum_{j\ne u}\eta \left( \frac{x_i-x_j}{\delta} \right) \le 1 - \eta(0)= -1<0.
\end{align}
Furthermore, since $\eta(x) \ge 0$ for all $x \in \R^n$ it is clear that
\begin{align}  \label{F 2}
	F(x) = 1-\sum_{i=1}^N \eta \left( \frac{x-x_i}{\delta} \right) \le 1 \text{ for all } x \in \R^n.
\end{align}

Since $\Lambda$ is a compact set there exists $R>0$ such that $\Lambda \subset B_R(0)$. Now, $\eta \in C^\infty(\R^n, [0,\infty))$ and therefore $F \in C^\infty(\R^n, \R)$. Hence, by {Thm.~\ref{thm: Weierstrass approx thm}}, there exists a polynomial $P \in \R[x]$ such that
\begin{align} \label{F P}
	\sup_{ x \in B_R(0)} |F(x) - P(x) | < \frac{\eps}{4 (1 + \mu(\Lambda))}.
\end{align}
Let us consider $H(x):=P(x) -  \frac{\eps}{4(1 + \mu(\Lambda))}$. Then
\begin{align} \label{ F H }
	|F(x) - H(x)|  & \le 	|F(x) - P(x)|  + \frac{\eps}{4(1 + \mu(\Lambda))} \\ \nonumber
	& < \frac{\eps}{2(1 + \mu(\Lambda))} \text{ for } x \in B_R(0).
\end{align}
Eqs~\eqref{F L1} and~\eqref{ F H } imply that
\begin{align*}
|| V- H||_{L^1(\Lambda, \R)} & \le || V- F||_{L^1(\Lambda, \R)}  + || F- H||_{L^1(\Lambda, \R)} \\
& \le \frac{\eps}{2} + \mu(\Lambda)  ||F- H||_{L^\infty(\Lambda, \R)}\\
& < \eps.
\end{align*}
Moreover, by Eq.~\eqref{F P} we have $P(x) < F(x) +\frac{\eps}{4(1 + \mu(\Lambda))}$ for $x \in B_R(0)$ and hence $H(x)=P(x) - \frac{\eps}{4(1 + \mu(\Lambda))}< F(x)$. Therefore by Eqs~\eqref{F 1} and~\eqref{F 2} it follows that 
\begin{align*} 
	H(x_i)& <F(x_i)<0 \text{ for all } i \in \{1,...,N\}, \\ \nonumber
	H(x) & <F(x)< 1 \text{ for all } x \in \Lambda.
\end{align*}
\end{proof}

\subsection{ Miscellaneous Results} \label{sec: appendix miscaleneous}
\begin{thm}[The Bolzano Weierstrass Theorem~\cite{oman2017short}] \label{thm: Bolzano}
	Consider a sequence $\{x_n\}_{n \in \N} \subset \R^n$ that is bounded, that is there exists $M>0$ such that $x_n < M$ for all $n \in \N$. Then there exists a convergent subsequence $\{y_n\}_{n \in \N} \subset \{x_n\}_{n \in \N}$. 
\end{thm}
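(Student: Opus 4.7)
The plan is to prove the theorem in two stages: first establish the one-dimensional case via a bisection argument, and then lift this to $\R^n$ by iteratively extracting subsequences along each coordinate. Since the statement is a classical result invoked only once in the proof of Theorem~\ref{thm: uniform convegence mplies H convergence}, the proof is primarily expository and relies on completeness of $\R$.

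For the one-dimensional case ($n=1$), I would proceed by nested interval bisection. Since $\{x_n\}_{n \in \N} \subset [-M,M]$, at least one of the two closed intervals $[-M,0]$ or $[0,M]$ contains $x_n$ for infinitely many indices; denote such a half $I_1$ and pick $y_1 = x_{n_1}$ with $x_{n_1} \in I_1$. Bisect $I_1$ into two closed subintervals of equal length; again at least one of them, call it $I_2$, contains $x_n$ for infinitely many $n$, so we may pick $y_2 = x_{n_2}$ with $n_2 > n_1$ and $x_{n_2} \in I_2$. Iterating yields nested closed intervals $I_1 \supseteq I_2 \supseteq \cdots$ with $\mathrm{length}(I_k) = M/2^{k-1}$ and a subsequence $\{y_k\}_{k \in \N}$ with $y_k \in I_k$. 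Since the lengths tend to zero, $\{y_k\}$ is Cauchy and hence convergent by completeness of $\R$.

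For the general case in $\R^n$, I would iteratively apply the one-dimensional result to each coordinate. Writing $x_n = ((x_n)_1, \dots, (x_n)_n)$, boundedness of $\{x_n\}$ implies that each coordinate sequence $\{(x_n)_i\}_{n \in \N}$ is bounded in $\R$. Applying the one-dimensional case to $\{(x_n)_1\}$ yields a subsequence along which the first coordinate converges. Restricted to this subsequence, the second coordinate sequence is still bounded, so a further subsequence makes the second coordinate converge while preserving convergence of the first. After $n$ such extractions we obtain a subsequence $\{y_k\}_{k \in \N} \subseteq \{x_n\}_{n \in \N}$ along which every coordinate converges; this is equivalent to convergence in $\R^n$ with respect to $\|\cdot\|_2$.

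The main obstacle is essentially bookkeeping rather than mathematical substance: one must ensure that the inductive extraction produces a genuine subsequence, i.e.\ that the selected indices remain strictly increasing at every stage. This is handled in the one-dimensional step by always choosing $n_k > n_{k-1}$ (which is possible since each $I_k$ contains infinitely many terms) and in the $\R^n$ reduction by taking subsequences of subsequences, for which strict monotonicity of indices is preserved automatically.
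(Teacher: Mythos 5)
Your proof is correct, but note that the paper does not actually prove Theorem~\ref{thm: Bolzano}: it is stated in the appendix as a classical result with a citation to~\cite{oman2017short} and used once, in Part~2 of the proof of Theorem~\ref{thm: uniform convegence mplies H convergence}, to extract a convergent subsequence from a bounded sequence in the compact set $\Lambda$. There is therefore no in-paper argument to compare against. Your two-stage argument -- nested-interval bisection in $\R$ followed by $n$ successive coordinate-wise subsequence extractions -- is the standard textbook proof and is sound; in particular you correctly handle the only delicate points, namely that each chosen half-interval must contain infinitely many terms (so that strictly increasing indices $n_1 < n_2 < \cdots$ can always be selected) and that a subsequence of a subsequence is again a subsequence, so the earlier coordinates keep converging. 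One cosmetic remark: the theorem statement writes the boundedness hypothesis as $x_n < M$ for vectors $x_n \in \R^n$, which should be read as $\norm{x_n}_2 < M$; your proof implicitly makes this (correct) interpretation when it asserts that each coordinate sequence is bounded. Reusing the symbol $n$ both for the ambient dimension and for the sequence index, as in $x_n = ((x_n)_1,\dots,(x_n)_n)$, is inherited from the paper's own statement but would be worth disambiguating in a self-contained write-up.
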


\begin{thm}[Putinar's Positivstellesatz \cite{putinar1993positive}] \label{thm: Psatz}
	Consider the semialgebriac set $X = \{x \in \R^n: g_i(x) \ge 0 \text{ for } i=1,...,k\}$. Further suppose $\{x  \in \R^n : g_i(x) \ge 0 \}$ is compact for some $i \in \{1,..,k\}$. If the polynomial $f: \R^n \to \R$ satisfies $f(x)>0$ for all $x \in X$, then there exists SOS polynomials $\{s_i\}_{i \in \{1,..,m\}} \subset \sum_{SOS}$ such that,
	\vspace{-0.4cm}\begin{equation*}
		f - \sum_{i=1}^m s_ig_i \in \sum_{SOS}.
	\end{equation*}
\end{thm}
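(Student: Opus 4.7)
The plan is to prove Putinar's Positivstellensatz by the classical functional-analytic argument of~\cite{putinar1993positive}. Define the quadratic module generated by the constraints
\begin{equation*}
M := \left\{ \sigma_0 + \sum_{i=1}^m \sigma_i g_i : \sigma_0, \sigma_1, \ldots, \sigma_m \in \sum_{SOS} \right\},
\end{equation*}
so the theorem is equivalent to the statement that every $f \in \R[x]$ which is strictly positive on $X$ belongs to $M$.

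The first step would be to establish the \emph{Archimedean} property of $M$: that there exists $N > 0$ with $N - \|x\|_2^2 \in M$. The hypothesis that $\{x : g_j(x) \ge 0\}$ is compact for some $j$ is crucial here. Since this set is compact, $\|x\|_2^2$ is bounded by some $N$ on it, so $N - \|x\|_2^2$ is strictly positive on a single compact basic semialgebraic set generated by $g_j$ alone. For a single generator, the quadratic module and the preordering coincide, and a Schm{\"u}dgen-type compactness argument then produces an explicit SOS decomposition $N - \|x\|_2^2 = \sigma_0 + \sigma_1 g_j$, placing $N - \|x\|_2^2$ in $M$.

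With $M$ Archimedean in hand, I would proceed by contradiction using Hahn--Banach separation. Suppose $f \notin M$; since $M$ is a convex cone in the vector space $\R[x]$, there exists a nonzero linear functional $L : \R[x] \to \R$ with $L \ge 0$ on $M$ and $L(f) \le 0$. The Archimedean property makes $L$ continuous in a natural seminorm, and Haviland's theorem (equivalently, a GNS construction followed by the spectral theorem applied to the commuting bounded multiplication operators associated with the $g_i$) yields a positive Borel measure $\mu$ on $\R^n$ representing $L$, i.e., $L(p) = \int p \, d\mu$ for all $p \in \R[x]$. The Archimedean condition further forces $\mu$ to be supported in $X$. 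Then $L(f) = \int_X f \, d\mu > 0$ since $f > 0$ on $X$ and $\mu$ is a nonzero positive measure, contradicting $L(f) \le 0$.

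The principal obstacle is establishing that the representing measure $\mu$ is supported in $X$ rather than in some larger set; this is precisely where the Archimedean property is indispensable, since it ensures each multiplication operator $M_{g_i}$ on the GNS Hilbert space is bounded and nonnegative so that its joint spectrum lies inside $\{g_i \ge 0\}$. Because the full argument is both classical and technically intricate, I would cite the original paper~\cite{putinar1993positive} for the detailed functional-analytic construction rather than reproducing every step.
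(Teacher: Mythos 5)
The paper does not prove this theorem at all: it is quoted verbatim as a classical result with a citation to Putinar's original 1993 paper, so there is no in-paper proof to compare against. Your sketch correctly reproduces the standard functional-analytic argument from that source (Archimedean property via Schm\"udgen on the single compact generator, Eidelheit/Hahn--Banach separation, GNS plus the spectral theorem to obtain a representing measure supported in $X$, and the resulting contradiction), and deferring the technical details to \cite{putinar1993positive} is exactly what the paper itself does.
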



\begin{lem}[\cite{clarke1975generalized}] \label{lem: max minimum of Lipschitz functions is Lipschitz}
Consider some compact set $X \subset \R^n$ and polynomial functions $\{g_i\}_{i=1}^m \in \R[x]$. Suppose
	\begin{align*}
	V_1(x) & := \inf_{w \in X} \min_{1 \le i \le m}g_i(x-w), \quad 
	V_2(x)  := \sup_{w \in X}  \max_{1 \le i \le m} g_i(x+w) ,
	\end{align*}
	then $V_1$ and $V_2$ are continuous functions.
\end{lem}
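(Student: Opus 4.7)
The plan is to establish continuity directly from the fact that each $g_i$ is continuous (in fact polynomial) and that $X$ is compact, via a standard two-sided bound argument on sequences. I will focus on $V_1$; the argument for $V_2$ is symmetric.

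First, I would define the joint function $\phi(x,w) := \min_{1 \le i \le m} g_i(x-w)$. Since each $g_i \in \R[x]$ is continuous on $\R^n$, the composition $(x,w) \mapsto g_i(x-w)$ is jointly continuous on $\R^n \times \R^n$, and the pointwise minimum of finitely many continuous functions is continuous. Therefore $\phi$ is jointly continuous, and because $X \subset \R^n$ is compact, the extreme value theorem guarantees that for every $x \in \R^n$ the infimum defining $V_1(x) = \inf_{w \in X} \phi(x,w)$ is attained by some $w_x \in X$.

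Next, fix $x_0 \in \R^n$ and any sequence $x_n \to x_0$; I want to show $V_1(x_n) \to V_1(x_0)$. For the \emph{upper bound}, pick $w_0 \in \arg\min_{w \in X} \phi(x_0,w)$. Then $V_1(x_n) \le \phi(x_n, w_0) \to \phi(x_0, w_0) = V_1(x_0)$ by joint continuity of $\phi$, giving $\limsup_{n \to \infty} V_1(x_n) \le V_1(x_0)$. For the \emph{lower bound}, let $w_n \in \arg\min_{w \in X} \phi(x_n,w)$, so that $V_1(x_n) = \phi(x_n, w_n)$. Since $\{w_n\} \subset X$ and $X$ is compact, by the Bolzano--Weierstrass theorem there exists a subsequence $w_{n_k} \to \bar w \in X$. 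Joint continuity of $\phi$ gives $\phi(x_{n_k}, w_{n_k}) \to \phi(x_0, \bar w) \ge V_1(x_0)$, so $\liminf_{n \to \infty} V_1(x_n) \ge V_1(x_0)$. Combining the two bounds yields $V_1(x_n) \to V_1(x_0)$, proving continuity of $V_1$.

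For $V_2$, I would apply the analogous argument with the joint function $\psi(x,w) := \max_{1 \le i \le m} g_i(x+w)$, which is again jointly continuous and attains its supremum over the compact set $X$; alternatively, one can observe that $V_2(x) = -\inf_{w \in X} \min_{1 \le i \le m}\bl-g_i(x+w)\br$ (after a sign flip and replacing $w$ by $-w$ modifies $X$ to the compact set $-X$), reducing the claim directly to the argument already given for $V_1$. There is no real obstacle here: the only subtle step is recognizing that one must extract a convergent subsequence of minimizers $w_n$ to pass to the limit on the lower-bound side, which is precisely where compactness of $X$ is used.
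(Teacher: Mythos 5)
Your argument is correct, but it is worth noting that the paper does not actually prove this lemma: it is stated in the appendix as an imported result, cited to Clarke's 1975 work on generalized gradients, where the relevant fact is that the marginal function $\inf_{w\in X}\phi(x,w)$ of a (locally) Lipschitz function over a compact index set is itself locally Lipschitz --- a strictly stronger conclusion than the continuity the lemma asserts (and the one its internal label, ``max minimum of Lipschitz functions is Lipschitz,'' hints at). Your route is a self-contained elementary alternative: joint continuity of $\phi(x,w)=\min_i g_i(x-w)$, attainment of the infimum by the extreme value theorem, and the standard two-sided $\limsup$/$\liminf$ argument with a Bolzano--Weierstrass extraction of a convergent subsequence of minimizers. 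This buys independence from the citation at the cost of a weaker conclusion (continuity rather than local Lipschitz continuity), which is all the paper ever uses. One small point of rigor on the lower-bound side: extracting a single convergent subsequence $w_{n_k}\to\bar w$ only controls $\lim_k \phi(x_{n_k},w_{n_k})$ along that subsequence; to conclude $\liminf_n V_1(x_n)\ge V_1(x_0)$ you should first pass to a subsequence along which $V_1(x_n)$ converges to the $\liminf$ (or argue by contradiction) and \emph{then} extract the convergent minimizers. This is a routine repair and does not affect the validity of the approach. Your reduction of $V_2$ to $V_1$ via $V_2(x)=-\inf_{w\in -X}\min_i\bigl(-g_i(x-w)\bigr)$ is also correct, since $-X$ is compact.
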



\begin{prop}[\cite{schlosser2021converging}] \label{prop: exietence of smooth sublevel set}
For each compact set $X \subset \R^n$ there exists a bounded function $p\in C^\infty(\R^n,\R)$ such that 
\begin{align*}
X=	\{x \in \R^n: p(x) \le 0\}.
\end{align*}
\end{prop}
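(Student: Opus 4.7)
The plan is an explicit construction: build a smooth non-negative function vanishing precisely on $X$, then compose with a bounded smooth nonlinearity. Since $X$ is compact, $U := \R^n \setminus X$ is open. For every $x \in U$ the number $r(x) := \tfrac{1}{2}D(x,X)$ is strictly positive and $\overline{B_{r(x)}(y)} \subset U$ for every $y$ with $\|y-x\|_2 \le r(x)/2$, so the family $\{B_{r(x)/2}(x)\}_{x \in U}$ is an open cover of $U$ with each closed ball contained in $U$. Because $\R^n$ is second countable, by Lindel\"of's theorem we may extract a countable subcover $\{B_i\}_{i \in \N}$ with $\overline{B_i} \subset U$ for every $i$.

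Next, for each $i$ choose a standard bump $\psi_i \in C^\infty(\R^n,[0,\infty))$ with $\mathrm{supp}\,\psi_i = \overline{B_i}$ and $\psi_i(x) > 0$ if and only if $x \in B_i$ (obtained by translating and scaling the model bump $\eta$ used in Prop.~\ref{prop: L1 poly approx of indicator}). Define
\begin{equation*}
p_0(x) := \sum_{i=1}^\infty a_i \psi_i(x), \qquad a_i := \frac{1}{2^i\bl(1 + M_i\br)}, \quad M_i := \sup_{|\alpha| \le i,\, x \in \R^n} |\partial^\alpha \psi_i(x)|.
\end{equation*}
The Weierstrass M-test applied to each derivative order shows that this series converges uniformly on $\R^n$ together with every order of partial derivatives, so $p_0 \in C^\infty(\R^n, [0,\infty))$. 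Moreover $p_0(x) > 0$ iff some $\psi_i(x) > 0$ iff $x \in \bigcup_i B_i = U$, hence $\{x \in \R^n : p_0(x) = 0\} = X$.

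Finally, to enforce boundedness, set $p := g \circ p_0$, where $g(t) := t/(1+t)$. Then $g \in C^\infty([0,\infty),[0,1))$, $g(0)=0$, and $g$ is strictly increasing, so $p \in C^\infty(\R^n,[0,1))$ is bounded and $p(x) = 0$ iff $p_0(x) = 0$ iff $x \in X$. Since $p \ge 0$, the sublevel set $\{x \in \R^n : p(x) \le 0\}$ coincides with $\{x : p(x) = 0\} = X$, completing the construction.

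The only delicate point is the verification that the series defining $p_0$ is smooth; this is routine once the coefficients $a_i$ are chosen to dominate the $C^i$ norms of $\psi_i$, which is the purpose of the factor $1/(1+M_i)$ in the definition of $a_i$. The compactness of $X$ is used only to guarantee that $U$ is open (so that the ball cover exists) — in fact the same argument produces a bounded smooth defining function for any closed set in $\R^n$.
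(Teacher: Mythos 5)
Your construction is correct. Note that the paper itself gives no proof of this proposition---it is simply imported by citation from~\cite{schlosser2021converging}---so your argument is a self-contained replacement rather than a variant of anything in the text. What you have written is the classical Whitney-type argument that every \emph{closed} subset of $\R^n$ is the zero set of a nonnegative $C^\infty$ function: cover the open complement by balls with closures in the complement, pass to a countable subcover by Lindel\"of, and sum bump functions with coefficients $a_i=1/(2^i(1+M_i))$ chosen to dominate the $C^i$ norms so that the series converges in every $C^k$ topology. All the steps check out: the tail estimate $|a_i\partial^\alpha\psi_i|\le 2^{-i}$ for $i\ge|\alpha|$ justifies term-by-term differentiation, and $p\ge 0$ forces $\{p\le 0\}=\{p=0\}=X$. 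Two small observations. First, since $M_i$ already includes the $|\alpha|=0$ term, your $p_0$ satisfies $p_0\le\sum_i 2^{-i}=1$, so it is bounded as it stands and the final composition with $g(t)=t/(1+t)$ is redundant (though harmless). Second, your closing remark is right: compactness of $X$ plays no role beyond closedness, so you have in fact proved a strictly more general statement than the one cited, which is a mild bonus rather than a defect.
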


\begin{lem}[Continuous functions have compact sublevel sets] \label{lem: Continuous functions have compact sublevel sets}
	If $f:\R^n \to \R$ is a continuous function then $\{x \in \R^n: f(x) \le \gamma \}$, where $\gamma \in \R$, is a closed set. Furthermore, if $\Lambda \subset \R^n$ is compact then $\{x \in \Lambda: f(x) \le \gamma \}$ is a compact set. 
\end{lem}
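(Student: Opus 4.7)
The plan is to prove both claims using elementary point-set topology, specifically the characterization of continuity via preimages of closed sets and the fact that closed subsets of compact sets are compact.

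For the first claim, I would observe that the set $S_\gamma := \{x \in \R^n: f(x) \le \gamma\}$ is exactly the preimage $f^{-1}((-\infty, \gamma])$. Since $(-\infty, \gamma]$ is closed in $\R$ and $f: \R^n \to \R$ is continuous, the preimage of any closed set is closed, so $S_\gamma$ is closed. Alternatively, if one prefers a direct sequential argument (which avoids invoking the preimage characterization of continuity), I would take any convergent sequence $\{x_k\}_{k \in \N} \subseteq S_\gamma$ with $x_k \to x^* \in \R^n$, and use continuity of $f$ to deduce $f(x^*) = \lim_{k \to \infty} f(x_k) \le \gamma$, which shows $x^* \in S_\gamma$ and hence that $S_\gamma$ is closed.

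For the second claim, I would write $\{x \in \Lambda: f(x) \le \gamma\} = \Lambda \cap S_\gamma$. Since $\Lambda$ is compact (hence closed and bounded by Heine--Borel) and $S_\gamma$ is closed by the first part, the intersection $\Lambda \cap S_\gamma$ is a closed subset of the compact set $\Lambda$. It is a standard result that a closed subset of a compact set is compact, completing the proof.

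There is no real obstacle here; the result is a direct consequence of standard topological facts. The only minor point to be careful about is that the lemma statement as written has $f$ defined on all of $\R^n$ for the first claim, so when restricting to $\Lambda$ in the second claim I should implicitly use the restriction $f|_\Lambda$, whose continuity is inherited from $f$. The proof is short enough that it could even be stated as ``follows immediately from the fact that continuous preimages of closed sets are closed, and closed subsets of compact sets are compact.''
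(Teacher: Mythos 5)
Your proposal is correct and is essentially the paper's own argument: the paper proves closedness by the sequential route you offer as an alternative (take $x_k \to x^*$ in the sublevel set and pass to the limit in $f(x_k)\le\gamma$), and then concludes compactness from closed-plus-bounded, which in $\R^n$ is interchangeable with your "closed subset of a compact set is compact." Your explicit decomposition as $\Lambda \cap f^{-1}((-\infty,\gamma])$ is, if anything, slightly cleaner, since it makes transparent why the limit point also lies in $\Lambda$.
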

\begin{proof}
	Consider a converging subsequence $\{x_k\}_{k=1}^\infty \subset  \{x \in \Lambda: f(x) \le \gamma \}$ such that $x_k \to x^*$. By continuity we have $f(x_k) \to f(x^*)$. Since $f(x_k) \le \gamma$ for all $k$ it follows $f(x^*) \le \gamma$. Hence, $x^* \in \{x \in \Lambda: f(x) \le \gamma \}$ implying that $\{x \in \Lambda: f(x) \le \gamma \}$ is closed. Now, $\{x \in \Lambda: f(x) \le \gamma \} \subset \Lambda$ and $\Lambda$ is bounded. Since $\{x \in \Lambda: f(x) \le \gamma \}$ is closed and bounded it follows that it is compact.
\end{proof}


	    }{
}

\end{document}